\newcommand{\calN}{{\mathcal{N}}}
\newcommand{\calX}{{\mathcal{X}}}
\newcommand{\calO}{{\mathcal{O}}}
\newcommand{\calL}{{\mathcal{L}}}
\newcommand{\calU}{{\mathcal{U}}}
\newcommand{\calV}{{\mathcal{V}}}
\newcommand{\calA}{{\mathcal{A}}}
\newcommand{\calM}{{\mathcal{M}}}
\newcommand{\calD}{{\mathcal{D}}}
\newcommand{\calQ}{\mathcal{Q}}
\newcommand{\calE}{\mathcal{E}}
\newcommand{\calF}{\mathcal{F}}
\newcommand{\calH}{\mathcal{H}}
\newcommand{\calC}{\mathcal{C}}
\newcommand{\calHom}{\mathcal{H}\mathrm{om}}
\newcommand{\A}{\mathbf{A}}
\newcommand{\G}{\mathbf{G}}
\newcommand{\Z}{\mathbf{Z}}
\newcommand{\C}{\mathbf{C}}
\newcommand{\F}{\mathbf{F}}
\newcommand{\Q}{\mathbf{Q}}
\renewcommand{\P}{\mathbf{P}}
\newcommand{\Spec}{{\mathrm{Spec}}}
\newcommand{\Hom}{\mathrm{Hom}}
\newcommand{\GL}{\mathrm{GL}}
\newcommand{\Coh}{\mathrm{Coh}}
\newcommand{\Vect}{\mathrm{Vect}}
\newcommand{\Sym}{\mathrm{Sym}}
\newcommand{\Frob}{\mathrm{Frob}}
\newcommand{\Ab}{\mathrm{Ab}}
\newcommand{\Gr}{\mathrm{Gr}}
\renewcommand{\ss}{\mathrm{ss}}
\newcommand{\ch}{\mathrm{char}}
\newcommand{\D}{\mathrm{D}}
\newcommand{\R}{\mathrm{R}}
\renewcommand{\L}{\mathrm{L}}
\newcommand{\W}{\mathrm{W}}
\newcommand{\Pic}{\mathrm{Pic}}
\newcommand{\et}{\mathrm{\acute{e}t}}
\newcommand{\im}{\mathrm{im}}
\newcommand{\id}{\mathrm{id}}
\newcommand{\ev}{\mathrm{ev}}
\newcommand{\Fil}{\mathrm{Fil}}
\newcommand{\pr}{\mathrm{pr}}
\newcommand{\Tr}{\mathrm{Tr}}
\newcommand{\Frac}{\mathrm{Frac}}
\newcommand{\Flag}{\mathrm{Flag}}
\newcommand{\univ}{\mathrm{univ}}
\newcommand{\fm}{\mathfrak{m}}
\newcommand{\fp}{\mathfrak{p}}
\newcommand{\comment}[1]{}
\begin{document}

\bibliographystyle{alpha}

\newtheorem{theorem}{Theorem}[section]
\newtheorem*{theorem*}{Theorem}
\newtheorem*{condition*}{Condition}
\newtheorem*{definition*}{Definition}
\newtheorem{proposition}[theorem]{Proposition}
\newtheorem{lemma}[theorem]{Lemma}
\newtheorem{corollary}[theorem]{Corollary}
\newtheorem{claim}[theorem]{Claim}
\newtheorem{claimex}{Claim}[theorem]

\theoremstyle{definition}
\newtheorem{definition}[theorem]{Definition}
\newtheorem{question}[theorem]{Question}
\newtheorem{remark}[theorem]{Remark}
\newtheorem{example}[theorem]{Example}
\newtheorem{condition}[theorem]{Condition}
\newtheorem{warning}[theorem]{Warning}
\newtheorem{notation}[theorem]{Notation}

\title{Derived splinters in positive characteristic}
\author{Bhargav Bhatt}
\begin{abstract}
This paper introduces the notion of a derived splinter. Roughly speaking, a scheme is a derived splinter if it splits off from the coherent cohomology of any proper cover. Over a field of characteristic $0$, this condition characterises rational singularities by a result of Kov\'acs. Our main theorem asserts that over a field of characteristic $p$, derived splinters are the same as (underived) splinters, i.e., as schemes that split off from any finite cover. Using this result, we answer some questions of Karen Smith concerning extending Serre/Kodaira type vanishing results beyond the class of ample line bundles in positive characteristic; these are purely projective geometric statements independent of singularity considerations. In fact, we can prove ``up to finite cover'' analogues in characteristic $p$ of many vanishing theorems one knows in characteristic $0$. All these results fit naturally in the study of F-singularities, and are motivated by a desire to understand the direct summand conjecture.
\end{abstract}

\maketitle

The goal of this paper is to introduce the notion of derived splinters and prove some basic results about them. Since derived splinters are analogues of splinters, we review the definition of the latter first.

\begin{definition}
A scheme $S$ is called a {\em splinter} if for any finite surjective map $f:X \to S$, the pullback map $\calO_S \to f_* \calO_X$ is split in the category $\Coh(S)$ of coherent sheaves on $S$.
\end{definition}

This term was coined in \cite{SinghQGorSplinter}, though the idea is much older. In characteristic $0$, splinters are exactly normal schemes (see Example \ref{ex:char0}). Away from characteristic $0$, however, splinters become much more interesting, and are the subject of numerous results and questions in commutative algebra. In particular, the still open direct summand conjecture \cite{HochsterDSC} posits that all regular rings are splinters; this conjecture is known in equicharacteristic by \cite{HochsterDSC} or in mixed characteristic for dimensions $\leq 3$ by \cite{HeitmannDSC3}, but is unknown in general, and is a fundamental open problem in the subject.

Our definition of derived splinters is inspired by that of splinters and the philosophy that proper maps provide robust derived analogues of finite maps, at least for coherent sheaf theory. More precisely, we have:

\begin{definition}
A scheme $S$ is called a {\em derived splinter}, or simply a {\em D-splinter}, if for any proper surjective map $f:X \to S$, the pullback map $\calO_S \to \R f_* \calO_X$ is split in derived category $D(\Coh(S))$ of coherent sheaves on $S$.
\end{definition}

Like splinters, the idea of D-splinters is not new. In fact, a theorem of Kov\'acs \cite{KovacsRational} identifies D-splinters in characteristic $0$ with schemes whose singularities are at worst rational. Since splinters in characteristic $0$ are precisely normal schemes, one notices immediately that splinters and D-splinters define extremely different classes of singularities in characteristic $0$. In characteristic $p > 0$ however, we discover a remarkably different picture; one of the main theorems of this paper is the following:

\begin{theorem}
\label{ddsct}
A noetherian $\F_p$-scheme $S$ is a splinter if and only if it is a D-splinter.
\end{theorem}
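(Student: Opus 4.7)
The easy direction is essentially formal: a finite surjective map $f: X \to S$ is in particular proper and affine, so $\R f_* \calO_X = f_* \calO_X$, and a splitting in $D(\Coh(S))$ is exactly a splitting in $\Coh(S)$. The substantive direction is the converse, where one assumes $S$ is a splinter and must split $\calO_S \to \R f_* \calO_X$ in $D(\Coh(S))$ for an arbitrary proper surjective $f: X \to S$.

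The organizing idea for the converse is to reduce the derived splitting problem to the splinter condition by factoring through a finite cover. Concretely, it suffices to produce a finite surjective map $g: S' \to S$ together with a morphism $\phi: \R f_* \calO_X \to g_* \calO_{S'}$ in $D(\Coh(S))$ for which the composite $\calO_S \to \R f_* \calO_X \xrightarrow{\phi} g_* \calO_{S'}$ is the unit map for $g$: given such a pair $(g,\phi)$, the splinter assumption furnishes a retraction $s: g_* \calO_{S'} \to \calO_S$ of the unit, and then $s \circ \phi$ provides the desired retraction of $\calO_S \to \R f_* \calO_X$. Note that this step must genuinely fail in characteristic $0$ (compatibly with Kov\'acs's characterisation of D-splinters as rational singularities), so any successful argument has to use characteristic $p$ in an essential way.

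To construct $(g, \phi)$, I would first reduce: splitting is a local question on $S$, so assume $S = \Spec R$ with $R$ a noetherian local $\F_p$-domain. Using de Jong's alteration theorem together with the transitivity observation that if $h: Y \to X$ is proper surjective then a splitting for $\R(fh)_* \calO_Y$ yields one for $\R f_* \calO_X$ (by precomposition with $\R f_* \calO_X \to \R(fh)_* \calO_Y$), we may replace $X$ by a regular alteration. The essential characteristic-$p$ input is then the Hochster--Huneke theorem that the absolute integral closure $R^+$ is a big Cohen--Macaulay $R$-algebra; translating cohomologically, this forces every positive-degree class in $H^i(X, \calO_X)$ to be annihilated after base-changing to a sufficiently large finite subextension of $R^+$. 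Since the sheaves $R^i f_* \calO_X$ are coherent and nonvanishing in only finitely many degrees, a single finite extension $R \to R' \subset R^+$ kills all of them simultaneously, and we take $g : \Spec R' \to S$.

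The main obstacle is converting this cohomology-killing statement into the actual existence of a morphism $\phi: \R f_* \calO_X \to g_* \calO_{S'}$ in the derived category: having each $R^i f_* \calO_X \to R^i f'_* \calO_{X \times_S S'}$ vanish after base change is strictly weaker than having $\R f_* \calO_X \to g_* \calO_{S'}$ exist as a derived-category map, because of $\Ext$-obstructions gluing the various truncations together. I would attack this by induction on the top nonvanishing cohomology degree $n$: using the distinguished triangle
\[
(R^n f_* \calO_X)[-n] \to \R f_* \calO_X \to \tau_{\leq n-1} \R f_* \calO_X \to (R^n f_* \calO_X)[-n+1],
\]
one successively enlarges $R'$ inside $R^+$ to kill both the top cohomology class and the obstruction class living in the appropriate $\Ext^1$ group, then applies the inductive hypothesis to the truncation. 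The coherence of $R^i f_* \calO_X$, the finite generation of the relevant $\Ext$-modules, and the fact that $R^+$ is a filtered colimit of its module-finite $R$-subalgebras are precisely what make this descent from $R^+$ to a finite $R'$ possible, and this is the technical heart where the $\F_p$-hypothesis is indispensable.
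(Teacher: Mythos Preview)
Your overall architecture is right: the substantive direction does come down to factoring $\calO_S \to \R f_* \calO_X$ through the pushforward of the structure sheaf of a finite cover of $S$, and then invoking the splinter hypothesis. The paper does exactly this. However, there is a genuine gap in how you propose to produce that factorisation.

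The problem is the step where you assert that the Hochster--Huneke big Cohen--Macaulay theorem, after ``translating cohomologically,'' forces positive-degree classes in $H^i(X,\calO_X)$ to vanish after base change along a finite extension $R \to R' \subset R^+$. This is false. Take $R = k$ a field and $X = E$ an elliptic curve; then $H^1(E,\calO_E) = k$, and for any finite extension $k \to k'$ the pullback $H^1(E,\calO_E) \to H^1(E_{k'},\calO_{E_{k'}})$ is the inclusion $k \hookrightarrow k'$, which is never zero. Your de Jong reduction does not help, since $E$ is already regular. The Hochster--Huneke theorem concerns the local cohomology of $R$ itself, and there is no translation of it into a cohomology-killing statement for an arbitrary proper $X$ over $\Spec R$ via \emph{base change along covers of $S$}; what is needed is finite covers of $X$ that do not arise from $S$. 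Your localisation step ``splitting is a local question on $S$'' is also suspect: existence of a retraction in $\D(\Coh(S))$ does not localise, and the paper never makes this reduction.

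The paper's route is to construct a finite surjective $\pi: Y \to X$ (a cover of $X$, not of $S$) such that $\tau_{\geq 1}\R f_*\calO_X \to \tau_{\geq 1}\R(f\pi)_*\calO_Y$ is zero in $\D(\Coh(S))$; the finite cover of $S$ you are looking for then falls out as the Stein factorisation of $f\pi$. The cohomology annihilation is obtained not by quoting the big Cohen--Macaulay theorem as a black box but by the ``equational lemma'' mechanism underlying its proof: a Frobenius-stable finitely generated submodule of $H^i(X,\calO_X)$ satisfies a monic additive polynomial in Frobenius, and adjoining roots of that polynomial over a \v{C}ech cover of $X$ produces the required finite cover of $X$. (In the elliptic-curve example this yields something like the isogeny $[p]:E \to E$, which does kill $H^1$.) The derived-category obstruction you correctly identify is then handled not by your proposed inductive $\Ext^1$-killing --- which, absent the cohomology-annihilating covers, has nothing to work with --- but by the elementary lemma that a composite of $d$ maps among objects of $\calD^{[1,d]}$, each killing one cohomology degree, is already zero; one simply iterates the cover construction $\dim(X)$ times.
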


The main tool used to prove Theorem \ref{ddsct} is a cohomology-annihilation result that is of independent interest: we show that the higher cohomology of the structure sheaf on a projective variety in characteristic $p$ can always be killed by a finite cover. In fact, we prove the following stronger relative statement:

\begin{theorem}
\label{killdercoh}
Let $f:X \to S$ be a proper morphism of noetherian $\F_p$-schemes. Then there exists a proper morphism $g:Y \to S$ and a finite surjective morphism $\pi:Y \to X$ such that the pullback map $\pi^*:\tau_{\geq 1} \R f_*\calO_X \to \tau_{\geq 1} \R g_*\calO_Y$ is $0$.
\end{theorem}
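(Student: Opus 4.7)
The plan is to reduce to an affine base, extract a ``single-class annihilation'' lemma as the heart of the proof, and then combine via an induction in the derived category. Since the statement is local on $S$, a noetherian patching argument over a finite affine open cover of $S$ reduces to $S = \Spec(A)$ affine; then each $R^i f_* \calO_X$ is a finitely generated $A$-module and $\tau_{\geq 1} \R f_* \calO_X$ is a bounded complex. Within the arguments below one may further invoke Chow's lemma to reduce to $f$ projective.

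\emph{Key single-class annihilation lemma.} The crux is the following: for any $\alpha \in R^i f_* \calO_X$ with $i \geq 1$, there exists a finite surjective $\pi: Y \to X$ with $\pi^* \alpha = 0$ in $R^i (f \circ \pi)_* \calO_Y$. My plan is (i) to spread out and use noetherian induction on $S$ to reduce to the case $S = \Spec(k)$ is a field, (ii) if necessary, to use de Jong's alterations to ensure $X$ is normal (or smooth), and (iii) to invoke the characteristic-$p$-specific theorems on absolute integral closures---notably the Hochster--Huneke-type vanishing asserting that the structure sheaf of the absolute integral closure $X^+$ has vanishing cohomology in low degrees. Since $X^+$ is the filtered colimit of finite covers of $X$, any fixed class $\alpha$ dies at some finite stage; by finite generation of $R^i f_* \calO_X$, one finite cover suffices to kill the whole module.

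\emph{Induction in the derived category.} Given the annihilation lemma, induct on the top non-vanishing cohomology degree $N$ of $\tau_{\geq 1} \R f_* \calO_X$, the case $N = 0$ being vacuous. For the inductive step, apply the lemma to a finite generating set of $R^N f_* \calO_X$ to produce a finite surjection $\pi_1: Y_1 \to X$ such that $R^N f_* \calO_X \to R^N g_{1,*} \calO_{Y_1}$ is zero, where $g_1 = f \circ \pi_1$. A manipulation with the truncation triangles $\tau_{\leq N-1} \to \cdot \to \tau_{\geq N}$ then reduces the residual map $\tau_{\geq 1} \R f_* \calO_X \to \tau_{\geq 1} \R g_{1,*} \calO_{Y_1}$ to one factoring through an object of amplitude strictly less than $N$, to which the induction hypothesis applied to $g_1: Y_1 \to S$ supplies a further finite surjection $\pi_2: Y \to Y_1$; the composite $\pi = \pi_1 \circ \pi_2$ is the desired cover. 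The main obstacle is the single-class annihilation lemma, where all the specifically characteristic-$p$ content lives---it packages deep inputs from the theory of $F$-singularities (Hochster--Huneke-style vanishing for absolute integral closures, together with de Jong's alterations); the derived-category induction is delicate in its truncation bookkeeping but essentially formal once the lemma is available.
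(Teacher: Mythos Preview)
Your overall architecture matches the paper's: reduce to an affine base, establish that the higher $\R^i f_* \calO_X$ can be killed by a finite cover, and then upgrade to the derived statement via truncation triangles. The paper packages this last step as Lemma~\ref{dercatlemma}, killing \emph{all} higher $\R^i$ simultaneously and then iterating $\dim(X)$ times; your top-degree induction is a variant, but as written it does not close, since the inductive hypothesis is indexed by the top degree of $\tau_{\geq 1}\R f_*\calO_X$, and passing to $g_1:Y_1\to S$ need not lower that degree. This is repairable along the lines of Lemma~\ref{dercatlemma}.

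The real gap is in your single-class annihilation lemma. You plan to invoke ``Hochster--Huneke-type vanishing for $X^+$'' as a black box, but the Hochster--Huneke theorem is a \emph{local} statement: for an excellent local domain $(R,\fm)$ in characteristic $p$, the absolute integral closure $R^+$ is big Cohen--Macaulay. The global statement you need---that $H^i(X,\calO_X)$ dies in the colimit over finite covers of a scheme proper over an affine---is not a formal consequence of this and is essentially the content of the theorem being proven. Your reduction to $S = \Spec(k)$ via noetherian induction is likewise unexplained (killing a class over the generic point and then over a closed complement does not obviously patch to kill it globally), and de Jong's alterations are proper but not finite, so invoking them does not preserve the shape of the problem. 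By contrast, the paper's Proposition~\ref{killcohabstract} proves the annihilation directly and elementarily: since $M \subset H^i(X,\calO_X)$ is Frobenius-stable and $A$-finite, each $m \in M$ satisfies a \emph{monic} additive polynomial $g$ in Frobenius; working with \v{C}ech cochains, one adjoins roots of $g$ applied to the components of a coboundary to promote the cohomological equation $g(m) = 0$ to an equation of local functions, whence monicity forces those functions to be globally defined on a further finite cover, exhibiting $m$ as a coboundary there. This explicit construction is the genuine characteristic-$p$ input, and it is self-contained rather than a citation.
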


The proof of Theorem \ref{killdercoh} is inspired by the paper \cite{HHBigCM} of M. Hochster and C. Huneke proving the existence of big Cohen-Macaulay algebras in positive characteristic (and also \cite{GLBigCM}). The same paper led K. Smith to ask certain questions concerning extensions of the vanishing theorems of Serre and Kodaira beyond the ample cone (see \S \ref{sec:poscharappksmith}). Using our methods, we are able to  answer these questions. The negative answers are recorded in the form of counterexamples at the end of \S \ref{sec:poscharappksmith}, while the affirmative answers are summarised below in Theorem \ref{smithconjsumm}. We refer the reader to Propositions \ref{smithconjpos} and \ref{smithconjneg} for more precise statements.

\begin{theorem}
\label{smithconjsumm}
Let $X$ be a proper variety over a field $k$ of positive characteristic, and let $\calL$ be a semiample line bundle on $X$. Then $H^i(X,\calL)$ can be killed by finite covers for $i > 0$. If $\calL$ is big as well, then $H^i(X,\calL^{-1})$ can be killed by finite covers for $i < \dim(X)$. 
\end{theorem}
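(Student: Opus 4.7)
The plan is to leverage Theorem \ref{killdercoh} together with the Frobenius endomorphism in characteristic $p$. Since $\calL$ is semiample, choose $n \geq 1$ with $\calL^n$ globally generated and let $\phi: X \to Z$ be the resulting morphism, so $\calL^n = \phi^*\calM$ for some ample $\calM$ on $Z$. As a preliminary reduction I will pass, via a finite cover of $X$, to the cleaner situation where $\calL = \phi^*\calN$ itself for some ample $\calN$ on a base $Z$: base-change along a finite cover $\nu: Z' \to Z$ trivializing $\calM$ up to an $n$-th power, and then trivialize the residual $n$-torsion discrepancy between $\rho^*\calL$ and the pullback of $\calN$ by one more finite cover of $X$. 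Henceforth I assume $\calL = \phi^*\calN$ with $\calN$ ample on $Z$.

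For the semiample claim, apply Theorem \ref{killdercoh} to $\phi$ to get a finite surjective $\pi: Y \to X$ with $g := \phi\pi$ such that $\tau_{\geq 1}\R\phi_*\calO_X \to \tau_{\geq 1}\R g_*\calO_Y$ vanishes. Given $\alpha \in H^i(X, \calL)$ with $i > 0$, the cover I use to kill $\alpha$ is $\sigma := F^e\circ\pi: Y \to X$, where $F^e$ is a Frobenius iterate on $X$ with $e \gg 0$. The projection formula gives $\R\phi_*\calL^{p^e} = \calN^{p^e}\otimes\R\phi_*\calO_X$ and $\R g_*\pi^*\calL^{p^e} = \calN^{p^e}\otimes\R g_*\calO_Y$. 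The distinguished triangle $\phi_*\calO_X \to \R\phi_*\calO_X \to \tau_{\geq 1}\R\phi_*\calO_X$, combined with Serre vanishing on the ample $\calN$ for $e \gg 0$ (so that $H^j(Z, \calN^{p^e}\otimes\phi_*\calO_X) = 0$ for $j > 0$, and likewise on $Y$), embeds $H^i(X,\calL^{p^e})$ into $\mathbb{H}^i(Z, \calN^{p^e}\otimes\tau_{\geq 1}\R\phi_*\calO_X)$, and analogously for $Y$. The induced map between these two hypercohomology groups is zero by Theorem \ref{killdercoh}, whence $\sigma^*\alpha = 0$.

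For the big-and-semiample claim, $\phi$ is additionally birational with $\dim Z = \dim X = d$. The same Frobenius-plus-$\pi$ strategy applied to $\calL^{-1}$ produces a long exact sequence
\[ \cdots \to H^i(Z, \calN^{-p^e}\otimes\phi_*\calO_X) \to H^i(X, \calL^{-p^e}) \to \mathbb{H}^i(Z, \calN^{-p^e}\otimes\tau_{\geq 1}\R\phi_*\calO_X) \to \cdots \]
whose rightmost term is killed by $\pi^*$ as in the semiample case. For the leftmost term to vanish for $i < d$ and $e \gg 0$, I invoke Serre duality together with Serre vanishing on the dual: one has $H^i(Z, \calN^{-p^e}\otimes\calF) = 0$ for $i < \mathrm{depth}(\calF)$ and $e \gg 0$. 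Applied to $\calF = \phi_*\calO_X$, this supplies the required vanishing provided $\phi_*\calO_X$ has depth $\geq d$. To arrange this I first replace $Z$ by its normalization (so $\phi_*\calO_X = \calO_Z$, using that $\phi$ is birational) and then pass to a further finite cover making $Z$ Cohen--Macaulay, using Hochster--Huneke-style big Cohen--Macaulay algebra techniques available in positive characteristic.

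The main obstacle will be this final Cohen--Macaulay arrangement in the big case: the rest of the argument---the reduction to the pullback situation, the Frobenius twist, and the interplay of Theorem \ref{killdercoh} with Serre vanishing---is essentially formal once the setup is in place, but controlling depth of pushforwards by finite covers is the delicate and genuinely new input required for the dual assertion.
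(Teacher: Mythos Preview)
Your treatment of the semiample case is correct and matches the paper's Proposition~\ref{smithconjpos}: pass to a finite cover so that $\calL$ is pulled back from an ample bundle, apply Theorem~\ref{killdercoh}, and conclude by Serre vanishing after a Frobenius twist. The paper phrases the last step via the factorization $\R f_*\calO_X \to g_*\calO_Y \to \R g_*\calO_Y$ rather than via the truncation triangle, but this is cosmetic.

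For the big-and-semiample case, your reduction to the ample case on $Z$ is also essentially the paper's (the paper packages it as Proposition~\ref{killwithh}, whose proof is built on the same triangle you invoke). Two minor corrections: bigness only forces $\phi$ to be generically finite, not birational, and the correct reduction to $\phi_*\calO_X = \calO_Z$ is Stein factorization rather than normalization of $Z$.

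The genuine gap is your final step. You propose to ``pass to a further finite cover making $Z$ Cohen--Macaulay'', but the existence of a module-finite Cohen--Macaulay extension of an arbitrary excellent local domain is the small Cohen--Macaulay algebra problem, which is open even in characteristic $p$. What Hochster--Huneke actually supply, and what the paper proves and uses (Propositions~\ref{killcohlocal} and~\ref{killcohdual}), is weaker but sufficient: there is a finite surjective $\pi:Y\to X$ such that $\tau_{>-d}(\Tr_\pi) = 0$ on dualizing complexes, equivalently $H^i_\fm(R) \to H^i_\fm(S)$ vanishes for $i<d$. This does \emph{not} make $Y$ Cohen--Macaulay; it only annihilates the obstruction under the map. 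The paper's proof of the ample case in Proposition~\ref{smithconjneg} then exploits this by factoring the twisted trace $H^{-i}(Y,\omega_Y^\bullet\otimes\pi^*\calL) \to H^{-i}(X,\omega_X^\bullet\otimes\calL)$ through $H^{d-i}(X,\omega_X\otimes\calL)$, which Serre vanishing (after a Frobenius twist) forces to be zero for $0<i<d$. So the correct input is ``annihilate the non-CM part of the dualizing complex under trace'', not ``find a Cohen--Macaulay cover''; only the former is available.
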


It seems worthwhile to remark at this point that the results mentioned above, in conjunction with those proven in \S \ref{sec:poscharappksmith}, have applications unrelated to splinters or D-splinters: these results suggest that numerous vanishing theorems that are true in characteristic $0$ have analogues in characteristic $p$ provided one works ``up to finite covers.'' This idea has been pursued in much more depth in the recent work \cite{BSTFSingAlt}, where ``up to finite cover'' analogues of the Nadel vanishing have been established.

Returning to affine D-splinters, we note that in positive characteristic $p$, by Theorem \ref{ddsct}, this class of singularities is closely related to other classes of singularities, the so-called F-singularities, defined using the Frobenius action. For example, locally excellent affine $\Q$-Gorenstein splinters are F-regular by \cite{SinghQGorSplinter} which builds on the Gorenstein case proven in \cite{HHTCSplinter}; see also Example \ref{ex:fratdsc} below.  In contrast, in the case of mixed characteristic, our knowledge about either splinters or D-splinters is minimal, primarily because the direct summand conjecture is unknown. For progress towards establishing an analogue of Theorem \ref{ddsct}, and especially a weak mixed characteristic analogue of Theorem \ref{killdercoh}, we refer the reader to \cite{Bhattmixedcharpdiv}.

\subsubsection*{Organisation of this paper}  The purpose of \S \ref{sec:ex} is to collect some examples and non-examples of splinters and D-splinters; the goal here is to describe some of the geometry underlying these definitions.  In \S \ref{sec:poschardercat} we review some general notation and results about derived categories used in this paper; the key result is a method for passing from conclusions at the level of cohomology groups to those at the level of complexes. Theorems \ref{ddsct} and \ref{killdercoh} are proven in \S \ref{sec:poscharmainthm}, and some refinements are proven in \S \ref{sec:poscharcommentary}; the method here is inspired by that of \cite{GLBigCM} and, by transitivity, by that of \cite{HHBigCM}. Moving to applications, we discuss some purely algebraic applications of the preceding theorems in \S \ref{sec:poscharappalg}. In \S \ref{sec:poscharappksmith}, we review some questions raised by Karen Smith in the wake of \cite{HHBigCM}, and then discuss both positive and negative answers we can provide; the highlights here are the ``up to finite covers'' version of Kodaira vanishing in Proposition \ref{smithconjneg}, and some of the counterexamples, especially Example \ref{nefnotenough}. Finally, in \S \ref{sec:poscharglobalex} we use Theorem \ref{killdercoh} to show that the complete flag variety for $\GL_n$ is a D-splinter, thereby providing the first non-toric projective example of one.

\subsubsection*{Acknowledgements}
This paper forms a part of the author's doctoral dissertation written under Aise Johan de Jong, and would not have been possible without his consistent support. In particular, the author would like to thank de Jong for suggesting some of the questions addressed in this paper, and for generously sharing ideas related to many parts of this paper. In addition, the author would also like to thank Karl Schwede and, especially, Anurag Singh for many conversations about derived splinters.

\section{Some examples}
\label{sec:ex}

The goal of this section is collect some examples of splinters and D-splinters. Since the notions in characteristic $0$ are quite well understood, we focus mainly on the case of characteristic $p$. Moreover, it is typically non-trivial to prove that any given ring is a splinter or D-splinter. Hence, we freely use results in the literature or elsewhere in this paper in our proofs; we hope that despite the resulting non-elementary nature of the examples, the reader will be convinced that splinters and D-splinters are geometrically interesting. 

We dispose of the characteristic $0$ case.

\begin{example}[Splinters in characteristic $0$]
\label{ex:char0}
A connected noetherian $\Q$-scheme $S$ is a splinter if and only if it is normal. For the forward direction, note that the map from the disjoint union of the irreducible components of $S$ to $S$ immediately shows that $S$ is forced to be a domain if it is a splinter. The desired claim now follows from the following ring-theoretic fact: if $R$ is an integral domain with $a/b \in \Frac(R)$ integral over $R$, then $R \to R[a/b]$ is not split unless $a/b \in R$. To prove this, we simply observe that if $R \to R[a/b]$ were split, then the quotient would be a torsion free $R$-module with generic rank $0$, which can only happen when the quotient is trivial. 

For the converse implication, we need to show that if $f:X \to S$ is a finite surjective morphism, and $S$ is normal and connected, then $f^*:\calO_S \to f_*\calO_X$ has a section in $\Coh(S)$. After replacing $X$ with an irreducible component dominating $S$, we may assume that $X$ is integral.  Let $d$ denote the degree of the map induced by $f$ at the level of function fields. Then the map $\frac{1}{d}\Tr_{X/S}$ provides a canonical splitting for the map $f^*:\calO_S \to f_*\calO_X$ (here we use that the trace map on function fields preserves integrality).
\end{example}

\begin{example}[D-splinters in characteristic $0$]
\label{ex:ddscchar0}
Let $X$ be a variety over $\C$. Then $X$ is D-splinter if and only if $X$ has rational singularities, i.e., if $\R f_* \calO_Y \simeq \calO_X$ for some (equivalently, every) resolution of singularities $f:Y \to X$. We refer the reader to \cite{KovacsRational} for a proof.
\end{example}

A splinter in positive characteristic $p$ is subtler than its characteristic $0$ avatar, as being a splinter imposes some kind of positivity (both local and global) on the variety. In fact, in view of Theorem \ref{ddsct}, over $\F_p$, being a splinter is equivalent to being a D-splinter, a condition that is {\em a priori} much more restrictive. Nevertheless, large classes of examples of splinters (or, equivalently, D-splinters) over $\F_p$ do exist, and are catalogued below. The intuition informing most of these examples is that splinters should be analogous to rational singularities in characteristic $0$.

\begin{example}[Smooth affines are splinters]
\label{ex:smdsc}
All regular affine $\F_p$-schemes are splinters; this is a result of Mel Hochster (see \cite{HochsterDSC}), and we record a proof of Hochster's theorem below for the convenience of the reader. The proof given below is cohomological in nature, and different from Hochster's.

We first explain the idea informally. Let $f:\Spec(S) \to \Spec(R)$ be a finite surjective map. Using the fact that $R$ is Gorenstein, an elementary duality argument will reduce us to showing that $H^d_\fm(R) \to H^d_\fm(S)$ is injective. The kernel of this map is a Frobenius stable proper submodule of $H^d_\fm(R)$ of finite length by an inductive argument due to Grothendieck (see \cite[Expos\'e VIII, Th\'eor\`eme 2.1]{SGA2}). The regularity of $R$ is regular will then imply that this is impossible for length reasons.

Now for the details. After localising and completing, we may assume that $(R,\fm)$ is a complete regular local $\F_p$-algebra of dimension $d$. By the Cohen structure theorem (see \cite[\S 28, Theorem 28.J and Corollary 2]{MatCA}), we know that $R \simeq k\llbracket x_1,\cdots,x_n \rrbracket$. Since field extensions $k \to L$ split as $k$-modules, we may pass to the algebraic closure of the coefficient field to assume that $k$ is algebraically closed. In particular, the Frobenius map $F:R \to R$ is finite. Given a finite extension $f:R \to S$, we need to show that the natural map $\ev_f:\Hom(S,R) \to \Hom(R,R)$ is surjective. By induction, we may assume that the cokernel $\calQ$ is supported only at the closed point $\{\fm\} \subset \Spec(R)$. In particular, the cokernel $\calQ$ has finite length. Now the fact that $R$ is Gorenstein implies that $\omega_R \simeq R$. Thus, the map $\ev_f$ is isomorphic to the trace map $\Hom(S,\omega_R) \to \omega_R$, which is dual to the canonical pullback map $H^d_\fm(f):H^d_\fm(R) \to H^d_\fm(S)$. As local duality interchanges kernels and cokernels while preserving lengths, it follows that the kernel $M := \ker(H^d_\fm(f))$ has the same length as $\calQ$; this kernel is also Frobenius-stable by construction. Now consider the diagram
\[ \xymatrix{
F^* M  \ar[d]^a \ar[r]^b & M \ar[d]^c \\
F^* H^d_\fm(R) \ar[r]^d & H^d_\fm(R). } \]
The map $a$ is injective since $F^*$ is exact (by regularity of $R$), while the map $d$ is an isomorphism by the flat base change isomorphism $\R \Gamma_\fm(R) \otimes_{R,F} R \simeq \R \Gamma_\fm(R)$ (see \cite[\S 4.3.2]{BSLocalCoh}). The diagram then shows that $b$ is also injective, and thus the length of $F^* M$ is bounded above by that of $M$. The claim now follows from the elementary observation that $F^*$ multiplies length by $d > 0$.
\end{example}

Following the proof of Example \ref{ex:smdsc} leads to a much larger class of splinters, defined in terms of F-rational rings. We remind the reader that a noetherian local $\F_p$-algebra $(R,\fm)$ of dimension $d$ is called {\em F-rational} if it is Cohen-Macaulay, normal, and has the property that $H^d_\fm(R)$ has no proper Frobenius-stable submodules except $0$. This is not the original definition of F-rationality, but equivalent to it by work of Karen Smith, see \cite{KStcparamideals, KSfratratsing}.

\begin{example}[F-rational Gorenstein rings are splinters]
\label{ex:fratdsc}
Let $(R,\fm)$ be a noetherian excellent local $\F_p$-algebra admitting a dualising complex. Assume that $R$ is Gorenstein. If $R$ is F-rational, then $R$ is a splinter. This follows from the proof given in Example \ref{ex:smdsc}. In more detail, we may assume without loss of generality that that $(R,\fm)$ is an F-rational Gorenstein complete noetherian local ring of dimension $d > 0$. Given a finite extension $f:R \to S$, we need to verify that $\ev_f:\Hom(S,R) \to R$ is surjective. By the Gorenstein assumption, we can identify this map with $\Hom(S,\omega_R) \to \omega_R$. The image of this last map is a Frobenius-stable submodule $M \subset \omega_R$. Moreover, since the formation of $M$ commutes with localisation, we know that $M$ is generically non-zero. The definition of F-rationality then shows that $M = \omega_R$ as desired.
\end{example}

\begin{remark}
One may show a converse to Example \ref{ex:fratdsc} as follows: any excellent splinter is $F$-rational. To see this, note that the argument in Example \ref{ex:char0} shows that $R$ is normal, while Corollary \ref{dsciscm} below shows that $R$ is Cohen-Macaulay. To show that $R$ is $F$-rational, one can then use \cite[Theorem 2.6]{KSfratratsing} and \cite[Theorem 5.4]{KStcparamideals}. Together, these theorems imply that it is enough to check that for all ideals $I$ generated by a system of parameters, we have $IS \cap R = I$ for all finite extensions $R \to S$. The splinter property implies that $IS = I \oplus Q$, which easily shows that $IS \cap R = I$.
\end{remark}

We work out a special case of Example \ref{ex:fratdsc}, to give an idea of the relevant geometry.

\begin{example}[The quadric cone]
\label{quadcone}
We claim that  $R = k\llbracket x_1,\dots,x_n \rrbracket /(\sum_i x_i^2)$ is a splinter for $n \geq 3$ provided $\ch(k) > 2$. By Example \ref{ex:fratdsc}, it suffices to show that $R$ is $F$-rational. By \cite[Theorem 4.2]{HunekeTCBook}, it suffices to show that $R/(x_n)$ is $F$-rational. Thus, we can set up an induction once we settle the $n = 3$ case. This case follows from \cite[Example 3]{HochsterDSC}. Alternately, in the $n = 3$ case, we may identify $R$ with (completion at the origin of) the affine cone on a smooth conic $C \subset \P^2$. Since $C$ is a hypersurface, the scheme $\Spec(R)$ has an isolated hypersurface singularity at $0$, and is thus Cohen-Macaulay and normal. Moreover, identifying $\Spec(R) - \{\fm\})$ with the total space of the complement of the $0$ section in $\calO_{\P^2}(-1)|_C$ shows that 
\[ H^2_\fm(R)  \simeq \oplus_{n \in \Z} H^1(C,\calO_{\P^2}(-n)|_C) \simeq \oplus_{n \in \Z} H^1(\P^1,\calO(2n)). \]
The preceding presentation is Frobenius equivariant, where Frobenius acts on the grading on the right by multiplying the weights by $p$. By inspection, it easily follows then that $H^2_\fm(R)$ has no Frobenius-stable proper non-zero submodules, proving F-rationality.
\end{example}

Next, we show that certain quotient singularities are splinters.

\begin{example}[Quotient singularities are often splinters]
\label{quotsing}
Let $k$ be a field, and let $R$ be a regular $k$-algebra. Let $G$ be a linearly reductive group acting on $R$. Then $\Spec(R^G)$ is a splinter. Indeed, the inclusion $R^G \to R$ has an $R^G$-linear section given by the Reynolds operator, and so the splinter property for $R^G$ follows from that of $R$. More generally, the same argument shows that any subring $A$ of a regular ring $R$ that splits off as an $A$-linear summand is a splinter. In particular, if $G$ is a reductive group over $\C$ acting on an affine algebraic $\C$-scheme $\Spec(R)$, then almost all positive characteristic reductions of $\Spec(R^G)$ are splinters.
\end{example}

We next list a large class of non-examples.

\begin{example}[General type cones are not splinters]
\label{ex:nonfanocone}
Let $X \subset \P^n$ be a hypersurface of degree $d > (n+1)$ over a perfect field $k$ of characteristic $p$, and let $S$ be the affine cone on $X$. Then $S$ is not a splinter. To see this, note that as in Example \ref{quadcone}, we have an identification
\[ H^n_\fm(S) \simeq \oplus_{i \in \Z} H^{n-1}(X,\calO_X(i)) \]
that is Frobenius equivariant, where Frobenius acts on the right by scaling the weights by $p$. Now $\omega_X \simeq \calO(d-n-1)|_X$ by adjunction. One then easily computes that $H^{n-1}(X,\omega^p) = H^{n-1}(X,\Frob_X^* \omega_X) = 0$, and thus $\Frob_X^*:H^{n-1}(X,\omega_X) \to H^{n-1}(X,\Frob_X^* \omega_X)$ has a non-trivial kernel. It follows that $\Frob_S^*:H^n_\fm(S) \to H^n_\fm(S)$ also has a non-trivial kernel, and so $\Frob_S:S \to S$ is not split.
\end{example}

Lastly, we discuss a non-example due to Hochster: a hypersurface singularity of dimension $2$ in characteristic $2$ that is not a splinter. Aside from its intrinsic interest, this example is meant to caution the reader as the standard lift of this hypersurface to characteristic $0$ has rational singularities.

\begin{example}
\label{hocgorendsc}
Let $k$ be a field of characteristic $2$. Let $S = k[u,v]$ be a polynomial ring, and let $R = k[u^2,v^2,u^3+v^3] \hookrightarrow S$. Since $\ch(k) = 2$, $R$ admits the presentation $R = k[x,y,z]/(x^3 + y^3 + z^2)$ where $x = u^2$, $y = v^2$, and $z = u^3 + v^3$. In particular, $\Spec(R)$ is a hypersurface singularity of dimension $2$. Since the singularity is isolated, $R$ is even normal. On the other hand, $\Spec(R)$ is not a splinter  because the natural map $f:\Spec(S) \to \Spec(R)$ is a finite surjective map such that $\calO_{\Spec(R)} \to f_*\calO_{\Spec(S)}$ has no section: identifying sheaves with modules and applying such a section $s$ to $u^3 + v^3 = u \cdot u^2 + v \cdot v^2$ would give us $u^3 + v^3 = s(u^3 + v^3) = s(u)u^2 + s(v)v^2 \in (u^2,v^2)R$ which is false. The same example can be adapted to arbitrary positive characteristic $p$ by setting $R = k[u^p,v^p,u^a + v^a]$ for some $p < a < 2p$.
\end{example}

The examples discussed hitherto were all affine. Requiring a projective variety $X$ over a positive characteristic field $k$ to be a splinter leads to questions of a very different flavour as the geometry of $X$ is heavily constrained. For example, Theorem \ref{killdercoh} shows that $H^i(X,\calO_X) = 0$ for all $i > 0$. In fact, the same theorem applied to a high iterate of Frobenius shows that $H^i(X,\calL) = 0$ for $i > 0$ whenever $\calL$ is an ample line bundle. Thus, projective examples are harder to find; nevertheless, they do exist, as we show below. We will discuss such examples further in \S \ref{sec:poscharglobalex}. 

\begin{example}[Toric varieties are often splinters]
\label{toricex}
Any toric variety $X$ that is projective over an affine is a splinter. To see this, note that any such $X$ can be obtained as a quotient $U/G$ (see \cite[Theorem 10.27]{MillerSturmfelsCCA}), where $U \subset \A^n$ is an open subscheme, and  $G \subset \G_m^n$ is an algebraic subgroup preserving $U$. As $\G_m^n$ is linearly reductive, so is $G$ (see \cite[Proposition 2.5]{AOVTameStacks}). In particular, we see that $\calO_X \to \pi_*\calO_U \simeq \R \pi_*\calO_U$ is a direct summand. The result now follows from the fact that $U$ is a splinter, which in turn follows from Example \ref{ex:smdsc} and the fact any finite cover of $U$ comes from a finite cover of $\A^n$ (by normalisation, for example).
\end{example}

Lastly, we record an elementary example showing that not all smooth projective varieties are splinters.

\begin{example}
\label{globaldscfailure}
Let $E$ be an elliptic curve over an algebraically closed field $k$ of positive characteristic $p$. We will show that $E$ is not a splinter. Consider the multiplication by $p$ map $[p]:E \to E$. The induced map on $H^1(E,\calO_E)$ can be easily seen to be $0$; for example, one can show that $[n]^*$ induces multiplication by $n$ on $H^1(A,\calO_A)$ for any abelian variety $A$. It follows that $\calO_E \to [p]_* \calO_E$ is not split.
\end{example}

\section{Some facts about derived categories}
\label{sec:poschardercat}

The purpose of this section is to record some notation and results about triangulated categories for later use.  As a general reference for triangulated categories and $t$-structures, we suggest \cite{BBD}. For the convenience of the reader, we first recall some notation regarding truncations.

\begin{notation}
Let $\calD$ be a triangulated category with a $t$-structure given by a pair $(\calD^{\geq 0}, \calD^{\leq 0})$ of full subcategories satisfying the usual axioms. For each integer $n$, we let $\calD^{\geq n} = \calD^{\geq 0}[-n]$ (respectively, $\calD^{\leq n} = \calD^{\leq 0}[-n]$); this can be thought of as the fullsubcategory spanned by objects with cohomology only in degree at least (respectively, at most) $n$. Moreover, there exist truncation functors: for each integer $n$, there exist endofunctors $\tau_{\leq n}$ and $\tau_{\geq n}$ of $\calD$ which are retractions of $\calD$ onto the fullsubcategories $\calD^{\leq n}$ and $\calD^{\geq n}$. We let $\tau_{> n} = \tau_{\geq n+1}$, and $\tau_{< n} = \tau_{\leq n-1}$. These truncation functors are not exact, but they sit in an exact triangle $\tau_{\leq n} \to \id \to \tau_{> n} \to \tau_{\leq n}[1]$. Moreover, they satisfy the adjunctions
\[ \Hom_{\calD^{\leq n}}(K,\tau_{\leq n} L) \simeq \Hom_{\calD}(K,\tau_{\leq n} L)  \simeq \Hom_{\calD}(K,L) \quad \textrm{for} \quad K \in \calD^{\leq n} \quad \textrm{and} \quad L \in \calD \]
and dually
\[ \Hom_{\calD^{\geq n}}(\tau_{\geq n} K, L)  \simeq \Hom_{\calD}(\tau_{\geq n} K,L) \simeq \Hom_{\calD}(K,L) \quad \textrm{for} \quad K \in \calD \quad \textrm{and} \quad L \in \calD^{\geq n}. \]
These adjunctions can be remembered as algebraic analogues of the fact that all maps $X \to Y$ are nullhomotopic if $X$ is an $n$-connected CW complex, and $Y$ is an $(n-1)$-truncated one.
\end{notation}

Let us fix a triangulated category $\calD$, with a $t$-structure $(\calD^{\geq 0},\calD^{\leq 0})$. The main question that arises repeatedly in the sequel is the following: given a morphism $f:K \to L$ in $\calD$ such that $H^*(f) = 0$, when can we conclude that $f = 0$?  As the non-trivial extension $\Z/2 \to \Z/2[1]$ in the derived category $\D(\Ab)$ of abelian groups shows, the short answer is ``not always''. To understand this phenomenon better, fix a test object $M \in \calD$, and consider the associated map of abelian groups 
\[\Hom(M,f):\Hom(M,K) \to \Hom(M,L)\]
The chosen $t$-structure gives rise to a functorial filtration on the morphism spaces of $\calD$ (via the filtration by cohomology groups of the target). Thus, the preceding map is a filtered map of filtered abelian groups. The assumption that $H^*(f) = 0$ implies that this filtered map induces the $0$ map on the associated graded pieces. In other words, $f$ moves the filtration one level down. This simple analysis suggests that under certain boundedness hypotheses, we may be able to salvage an implication of the form ``$H^*(f) = 0 \Rightarrow f = 0$'' at the expense of iterating a map like $f$ a few times. This idea is formalised in the following lemma:

\begin{lemma}
\label{dercatlemma}
Let $\calD$ be a triangulated category with $t$-structure $(\calD^{\geq 0},\calD^{\leq 0})$ whose heart is $\calA$. Assume that for a fixed integer $d > 0$, we are given objects $K_1,\dots,K_{d+1} \in \calD^{[1,d]}$ and maps $f_i:K_i \to K_{i+1}$ such that $H^{d+1-i}(f_i) = 0$ for all $i$. Then the composite map $f_d \circ \cdots \circ f_2 \circ f_1:K_1 \to K_d$ is the $0$ map.
\end{lemma}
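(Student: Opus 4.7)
The plan is to prove by induction on $i \in \{1,\ldots,d\}$ that the partial composite $h_i := f_i \circ f_{i-1} \circ \cdots \circ f_1 : K_1 \to K_{i+1}$ admits a factorization through the canonical map $\tau_{\leq d-i} K_{i+1} \to K_{i+1}$. Applying this for $i = d$, the full composite factors through $\tau_{\leq 0} K_{d+1}$; since each $K_j \in \calD^{[1,d]} \subseteq \calD^{\geq 1}$, this truncation vanishes, yielding the conclusion. (Note the composite has target $K_{d+1}$, not $K_d$; I read the statement that way.)

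The one technical input is a standard consequence of the truncation adjunctions recalled just before the lemma: for any $K \in \calD^{\leq n}$ and $L \in \calD^{\geq n}$, passage to $H^n$ is a bijection $\Hom_\calD(K, L) \xrightarrow{\sim} \Hom_\calA(H^n(K), H^n(L))$. Indeed, applying $\Hom_\calD(-,L)$ to the triangle $\tau_{\leq n-1} K \to K \to H^n(K)[-n]$ identifies $\Hom_\calD(K, L)$ with $\Hom_\calD(H^n(K)[-n], L)$, and the right-hand side reduces to $\Hom_\calA(H^n(K), H^n(L))$ by a dual argument applied to the triangle $H^n(L)[-n] \to \tau_{\geq n} L \to \tau_{\geq n+1} L$. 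In particular, a morphism between objects in this range vanishes iff its $H^n$ does.

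The base case $i=1$ is immediate: the composite $K_1 \xrightarrow{f_1} K_2 \to \tau_{\geq d} K_2$ has source in $\calD^{\leq d}$ and target in $\calD^{\geq d}$, with $H^d$-component equal to $H^d(f_1) = 0$, hence vanishes; the triangle $\tau_{\leq d-1} K_2 \to K_2 \to \tau_{\geq d} K_2$ then lifts $f_1$ as desired. For the inductive step, write $h_{i-1}$ as $K_1 \xrightarrow{g} \tau_{\leq d-i+1} K_i \xrightarrow{\iota} K_i$, and apply the same principle to the composite $\tau_{\leq d-i+1} K_i \xrightarrow{\iota} K_i \xrightarrow{f_i} K_{i+1} \to \tau_{\geq d-i+1} K_{i+1}$: its source lies in $\calD^{\leq d-i+1}$, its target in $\calD^{\geq d-i+1}$, and its $H^{d+1-i}$ equals $H^{d+1-i}(f_i) = 0$. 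Hence it factors through $\tau_{\leq d-i} K_{i+1}$, and precomposing with $g$ gives the desired factorization of $h_i$.

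There is no serious obstacle: the argument is essentially a mechanical implementation of the filtration-lowering heuristic described in the paragraph preceding the lemma. The only delicate bookkeeping is matching indices so that the $d$ vanishing hypotheses precisely absorb the $d$ steps of the filtration induced by the length of the interval $[1,d]$, and noting that the constraint $K_j \in \calD^{\geq 1}$ (rather than $\calD^{\geq 0}$) is exactly what forces the terminal truncation $\tau_{\leq 0} K_{d+1}$ to be zero.
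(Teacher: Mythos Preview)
Your proof is correct and follows essentially the same approach as the paper: both show inductively that the partial composite factors through successively lower truncations $\tau_{\leq d-i} K_{i+1}$, using the vanishing of $H^{d+1-i}(f_i)$ together with the truncation adjunctions to produce each lift, and then conclude from $\tau_{\leq 0} K_{d+1} = 0$. The only cosmetic differences are that the paper writes the third term of the relevant triangle as $H^d(K_2)[-d]$ rather than $\tau_{\geq d} K_2$ (these agree since $K_2 \in \calD^{\leq d}$) and organises the induction as a ladder diagram rather than tracking the composite $h_i$ directly.
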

\begin{proof}
Consider the exact triangle 
\[\tau_{\leq d-1} K_2 \to K_2 \to H^d(K_2)[-d] \to \tau_{\leq d-1} K_2[1].\]
Applying $\Hom_{\calD}(K_1,-)$ and using the formula (coming from adjunction)
\begin{eqnarray*}
\Hom_{\calD}(K_1,H^d(K_2)[-d]) &=& \Hom_{\calD^{\geq d}}(\tau_{\geq d}K_1,H^d(K_2)[-d]) \\
						&=& \Hom_{\calD^{\geq d}}(H^d(K_1)[-d],H^d(K_2)[-d]) \\
						&=& \Hom_{\calA}(H^d(K_1),H^d(K_2)), 
\end{eqnarray*}
we see that the map $K_1 \to H^d(K_2)[-d]$ factors through $H^d(f_1)$ and is thus $0$ by hypothesis. We may therefore choose a (non-unique) factorisation of $f_1$ of the form $K_1 \to \tau_{\leq d-1} K_2 \to K_2$. The same method shows that the morphism $\tau_{\leq d-i} K_{i+1} \to \tau_{\leq d-i} K_{i+2}$ factors through $\tau_{\leq d-(i+1)} K_{i+2}$. Thus, we obtain a diagram of morphisms:
\[ \xymatrix{
	K_1 \ar@{=}[r] \ar[d] & K_1 \ar[d]^{f_1}  \\
	\tau_{\leq d-1} K_2 \ar[r] \ar[d] & K_2 \ar[d]^{f_2} \\
	\ldots \ar[r] \ar[d] & \ldots \ar[d]^{f_d} \\
	\tau_{\leq 0} K_{d+1} \ar[r] & K_{d+1} } \]
As $K_{d+1} \in \D^{\geq 1}(\calA)$, we see that $\tau_{\leq 0} K_{d+1} = 0$. Thus, the composite vertical morphism on the left is zero, which implies that the one on the right is $0$ as well.
\end{proof}

\section{The main theorem}
 \label{sec:poscharmainthm}

This section is dedicated to the proof of Theorems \ref{killdercoh} and \ref{ddsct}. In fact, the bulk of the work involves proving Theorem \ref{killdercoh} as Theorem \ref{ddsct} then follows by a fairly formal argument. The proof we give here draws on ideas whose origin can be traced back to Hochster and Huneke's work \cite{HHBigCM} on big Cohen-Macaulay algebras in positive characteristic. We begin with a rather elementary result on extending covers of schemes. 

\begin{proposition}
\label{extendcover}
Fix a noetherian scheme $X$. Given an open dense subscheme $U \to X$ and a finite (surjective) morphism $f:V \to U$, there exists a finite (surjective) morphism $\overline{f}:\overline{V} \to X$ such that $\overline{f}_U$ is isomorphic to $f$. Given a Zariski open cover $\calU = \{j_i:U_i \to X\}$ with a finite index set, and finite (surjective) morphisms $f_i:V_i \to U_i$, there exists a finite (surjective) morphism $f:Z \to X$ such that $f_{U_i}$ factors through $f_i$. The same claims hold if ``finite (surjective)'' is replaced by ``proper (surjective)'' everywhere.
\end{proposition}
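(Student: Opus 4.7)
The plan is to prove the single-cover assertion first and then deduce the cover-indexed version by a fiber-product construction; in both flavors, the strategy for the single cover is to compactify to obtain a proper $X$-scheme and then cut down to a closed subscheme that genuinely extends $V$.

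For the single cover in the finite case, I would use that a finite morphism is projective to choose a closed immersion $V \hookrightarrow \P^n_U$ for some $n$. Let $\overline{V} \subset \P^n_X$ be the scheme-theoretic closure; it is proper over $X$, and flat base change along $U \hookrightarrow X$ gives $\overline{V}|_U = V$ since $V$ is already closed in $\P^n_U$. To pass from proper to finite, replace $\overline{V}$ by its affinization $Z := \Spec_X(\pi_*\calO_{\overline{V}})$, where $\pi : \overline{V} \to X$: coherence of $\pi_*\calO_{\overline{V}}$ (which holds because $\pi$ is proper and $X$ noetherian) makes $Z \to X$ finite, and over $U$ one recovers $V$ since $V \to U$ is already affine. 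In the proper case, I would instead invoke Nagata compactification to factor $V \to X$ as $V \hookrightarrow \overline{V} \to X$ with an open immersion followed by a proper morphism. The composite $V \hookrightarrow \overline{V}|_U \to U$ is the given proper map $V \to U$, and $\overline{V}|_U \to U$ is separated, so $V \hookrightarrow \overline{V}|_U$ is proper; a proper open immersion is a closed immersion, so $V$ is clopen in $\overline{V}|_U$. The scheme-theoretic image $\tilde V$ of $V$ in $\overline{V}$ is then closed in $\overline{V}$, hence proper over $X$, and its restriction to $U$ equals the scheme-theoretic image of the closed immersion $V \hookrightarrow \overline{V}|_U$, which is $V$ itself. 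Surjectivity in either case follows from the density of $U$ together with closedness of images of finite/proper maps.

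For part 2, I would first extend each $f_i : V_i \to U_i$ to a finite (resp.\ proper) surjection $\overline{f}_i : \overline{V}_i \to X$ whose restriction to $U_i$ still equals $V_i$. When $U_i$ is dense in $X$ the single-cover construction applies directly; when not, apply it first to the closure of $U_i$ in $X$ and then take the disjoint union with the closed immersion $(X \setminus U_i)_{\red} \hookrightarrow X$ (which is finite, hence proper) to restore surjectivity onto $X$ without altering the restriction over $U_i$. Now set $Z := \overline{V}_1 \times_X \cdots \times_X \overline{V}_n$. This is finite (resp.\ proper) over $X$; projection onto the $i$-th factor furnishes the required factorization $Z|_{U_i} \to \overline{V}_i|_{U_i} = V_i \to U_i$, and the fibers are nonempty pointwise, because each $\overline{V}_j|_x$ is nonempty by surjectivity of $\overline{f}_j$, so the fiber product over the residue field at any $x \in X$ is nonempty.

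The main obstacle is the proper single-cover case: it rests on the nontrivial Nagata compactification theorem, and the clopen observation is the crux that lets the scheme-theoretic image restrict correctly to $V$ over $U$. Once this core is in place, the remaining ingredients---projective embeddings for finite morphisms, Stein-style affinization, scheme-theoretic closures, and fiber products---are formal.
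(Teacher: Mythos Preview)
Your proposal is correct and follows the same overall architecture as the paper: compactify the single cover, take a scheme-theoretic closure to recover the original morphism over $U$, and then handle the cover-indexed statement by fiber-producting the individual extensions. The main difference is in the finite single-cover step: the paper invokes Zariski's main theorem to factor $V \to X$ as an open immersion into a finite $X$-scheme $W$, then takes the scheme-theoretic closure of $V$ in $W$; you instead embed $V$ projectively over $U$, close up in $\P^n_X$, and pass to the affinization (Stein factorization) to recover finiteness. Both are standard and either works; ZMT is a touch slicker since it lands directly in a finite $X$-scheme and avoids the extra Stein step. For the proper case both arguments use Nagata and scheme-theoretic closure, and your ``proper open immersion is clopen'' observation is exactly the point the paper leaves implicit when it says the closure ``provides the required compactification.''

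One place where you are actually more careful than the paper: in the cover-indexed assertion the $U_i$ need not be dense, so the single-cover extension of $V_i$ need not surject onto $X$, and then the fiber product can fail to be surjective. The paper glosses over this (in its applications $X$ is typically integral, so nonempty opens are dense), whereas your disjoint-union fix with $(X \setminus U_i)_{\red}$ cleanly restores surjectivity without disturbing the restriction over $U_i$.
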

\begin{proof}
We first explain how to deal with the claims for finite morphisms. For the first part, Zariski's main theorem \cite[Th\'eor\`eme 8.12.6]{EGA4_3} applied to the morphism $V \to X$ gives a factorisation $V \hookrightarrow W \to X$ where $V \hookrightarrow W$ is an open immersion, and $W \to X$ is a finite morphism. The scheme-theoretic closure $\overline{V}$ of $V$ in $W$ provides the required compactification in view of the fact that finite morphisms are closed.

For the second part, by the first part, we may extend each $j_i \circ f_i:V_i \to X$ to a finite surjective morphism $\overline{f_i}:\overline{V_i} \to X$ such that $\overline{f_i}$ restricts to $f_i$ over $U_i \hookrightarrow X$. Setting $W$ to be the fibre product over $X$ of all the $\overline{V_i}$ is then seen to solve the problem.

To deal with the case of proper (surjective) morphisms instead of finite (surjective) , we repeat the same argument as above replacing the reference to Zariski's main theorem by one to Nagata's compactification theorem (see \cite[Theorem 4.1]{ConradNagata}).
\end{proof}

Next, we present the primary ingredient in the present proof of Theorem \ref{killdercoh}: a general technique for constructing covers to annihilate coherent cohomology of $\F_p$-schemes under suitable finiteness assumptions. The method of construction is essentially borrowed from \cite{GLBigCM} where it is used to reinterpret and simplify the ``Equational Lemma,'' one of the main ingrendients in the proof of the existence of big Cohen-Macaulay algebras in positive characteristic (see \cite{HHBigCM}).

\begin{proposition}
\label{killcohabstract}
Let $X$ be a noetherian $\F_p$-scheme with $H^0(X,\calO_X)$ finite over a ring $A$. Given an $A$-finite Frobenius-stable submodule $M \subset H^i(X,\calO_X)$ for $i > 0$, there exists a finite surjective morphism $\pi:Y \to X$ such that $\pi^*(M) = 0$
\end{proposition}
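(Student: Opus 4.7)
The plan is to build the required finite cover explicitly using \v{C}ech cohomology, exploiting the interaction between Frobenius semilinearity and $A$-finiteness on $M$; this mirrors the ``Equational Lemma'' strategy of \cite{HHBigCM} as reinterpreted in \cite{GLBigCM}. Since $X$ is noetherian, I first fix a finite affine open cover $\calU=\{U_1,\ldots,U_r\}$ together with $A$-module generators $\alpha_1,\ldots,\alpha_s$ of $M$, and represent each $\alpha_\ell$ by a \v{C}ech $i$-cocycle $(\alpha_{\ell; k_0\cdots k_i})$ with respect to $\calU$. Frobenius stability together with $A$-finiteness of $M$ then provides, for each $\ell$, an identity of the form $F(\alpha_\ell)=\sum_m a_{\ell m}\alpha_m$ in $H^i(X,\calO_X)$ with $a_{\ell m}\in H^0(X,\calO_X)$, which at the cocycle level translates to the existence of \v{C}ech $(i-1)$-cochains $\beta_\ell=(\beta_{\ell; k_0\cdots k_{i-1}})$ with
\[ \alpha_{\ell; k_0\cdots k_i}^p - \sum_m a_{\ell m}\,\alpha_{m; k_0\cdots k_i} = (\delta \beta_\ell)_{k_0\cdots k_i}. \]

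Next I would set $Y=\Spec_X \mathcal{B}$, where $\mathcal{B}$ is the $\calO_X$-algebra obtained by adjoining generators $T_{\ell; k_0\cdots k_{i-1}}$ on each $(i-1)$-fold intersection subject to the two families of relations
\[ (\delta T_\ell)_{k_0\cdots k_i} = \alpha_{\ell; k_0\cdots k_i},\qquad T_{\ell; k_0\cdots k_{i-1}}^p - \sum_m a_{\ell m}\,T_{m; k_0\cdots k_{i-1}} = \beta_{\ell; k_0\cdots k_{i-1}}. \]
The first family forces each $\pi^*\alpha_\ell$ to be the coboundary of the \v{C}ech $(i-1)$-cochain $(T_{\ell; k_0\cdots k_{i-1}})$ on $\pi^{-1}\calU$, so $\pi^*\alpha_\ell=0$ and hence $\pi^*M=0$. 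The second family is an Artin--Schreier-type polynomial relation making each $T_{\ell; k_0\cdots k_{i-1}}$ integral of degree $p$ over $\calO_X$, so that $\mathcal{B}$ is a finite $\calO_X$-algebra. After passing to an irreducible component dominating $X$ (and invoking Proposition \ref{extendcover} on components not already covered), one obtains the desired finite surjective $\pi:Y\to X$ with $\pi^*M=0$.

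The main technical obstacle is checking that the two families of relations are simultaneously consistent and yield a genuinely finite (rather than merely ind-finite) $\calO_X$-algebra: the coboundary relations glue the $T$-variables across intersections, the polynomial relations make them integral over $\calO_X$, and one must verify compatibility under the \v{C}ech transition data. This is precisely where $A$-finiteness of $M$ is essential, since it supplies bounded-degree Frobenius relations with coefficients in the base ring; without this hypothesis one could still make any single class a coboundary but only ind-finitely. A secondary point is independence of the construction from the auxiliary choices of representatives $(\alpha_{\ell;\bullet})$ and $(\beta_{\ell;\bullet})$: any other choice differs from the original by a coboundary, leading to a finite change-of-variables on the $T$'s that preserves finiteness of $\mathcal{B}$.
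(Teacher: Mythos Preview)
Your idea is on the right track and essentially coincides with the paper's approach for $i=1$: in that case the $T_{\ell;k_0}$ are variables on the opens $U_{k_0}$ themselves, relation (b) defines a finite flat $\calO_X$-algebra of rank $p^s$ on each $U_{k_0}$, and relation (a) is exactly a change-of-variables gluing on overlaps (whose compatibility with (b) follows from $\alpha^p-\sum a_{\ell m}\alpha_m=\delta\beta$, and whose cocycle condition is $\delta\alpha=0$). This is a clean Artin--Schreier-type torsor construction.

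The gap is for $i\geq 2$. There your generators $T_{\ell;k_0\cdots k_{i-1}}$ live on $i$-fold intersections, not on the $U_j$, so they do not define a quasi-coherent $\calO_X$-algebra in any evident way. If you try to add them only over the intersections, then over a single $U_j$ there are no new variables and the rank of $\mathcal{B}$ jumps; if you add them globally, then over $U_j$ there is no relation of type (b) to make them integral, so $\mathcal{B}$ is not finite. Relation (a) is no longer a gluing datum between locally-defined finite algebras; it is a constraint on sections over deeper intersections, and there is no mechanism to propagate it to a global finite cover. Your remark that ``one must verify compatibility under the \v{C}ech transition data'' is exactly where the argument breaks, and the obstruction is structural rather than technical.

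The paper circumvents this by \emph{not} imposing $\delta T=\alpha$ directly. Working one class $m\in M$ at a time, it uses $A$-finiteness and Frobenius-stability to find a \emph{monic} additive polynomial $g\in A\{X^p\}$ with $g(m)=0$, hence $g(\tilde m)=d(n)$ at the cochain level. The finite covers are then built from \emph{equations over $A$}: first adjoin solutions to $g(T)=n_J$ on each $U_J$ and globalise via Proposition~\ref{extendcover}; on the resulting cover $n=g(n')$, so by additivity $g(\tilde m-d(n'))=0$. Now the components of the $i$-cochain $\tilde m-d(n')$ are integral over $A$, and after one further finite cover (containing roots of $g$ over $A$) they become restrictions of \emph{global} sections. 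A \v{C}ech $i$-cochain with values in constants is automatically a coboundary for $i>0$, so $\pi^*m=0$. The point is that ``cochain of global functions'' is a coboundary for free; this replaces your relation (a) and is what makes the argument go through uniformly in $i$.
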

\begin{proof}
We first explain the idea informally. As $M$ is $A$-finite, it suffices to work one cohomology class at a time. If $m \in M$, then the Frobenius-stability of $M$ gives us a monic additive polynomial $g(X^p)$ such that $g(m) = 0$ where $X^p$ acts by Frobenius. After adjoining $g$-th roots of certain local functions representing a coboundary, we can promote the preceding equation in cohomology to an equation of cocycles, i.e, we find $g(\overline{m}) = 0$ where $\overline{m}$ is a cocycle of local functions that represents $m$, and the displayed equality is an equality of functions on the nose, not simply up to coboundaries. Since $g$ is monic, such functions are forced to be globally defined (after normalisation), and this gives the desired result; the details follow.

Fix a finite affine open cover $\calU = \{U_i\}$ of $X$, and consider the cosimplicial $A$-algebra  $\calC^\bullet(\calU,\calO_X)$ as a model for the $A$-algebra $\R\Gamma(X,\calO_X)$. The Frobenius action $\Frob_X^*:\R\Gamma(X,\calO_X) \to \R\Gamma(X,\calO_X)$ is modelled by the actual Frobenius map $X^p:x \mapsto x^p$ on each term of this $A$-algebra. This gives $\calC^\bullet(\calU,\calO_X)$ the structure of an $A\{X^p\}$-module, where $A\{X^p\}$ is the non-commutative polynomial ring on one generator $X^p$ over $A$ satisfying the commutation relation $r^pX^p = X^pr$ (see \cite[\S 1.1]{LaumonCDMV1} for more details on this ring). In more concrete terms, at the level of cohomology, we see the following: for each polynomial $g \in A\{X^p\}$, classes $\alpha,\beta \in H^i(X,\calO_X)$, and a scalar $r \in A$, we have $g(\alpha + \beta) = g(\alpha) + g(\beta)$, and $g(r \alpha) = r^p g(\alpha)$.

The $A$-finiteness of the Frobenius stable module $M$ ensures that for any class $m \in M$, there exists a {\em monic} polynomial $g \in A\{X^p\}$ such that $g(m) = 0$. If we pick representatives in $\calC^\bullet(\calU,\calO_X)$ for this equation, we obtain an equation in $\calC^i(\calU,\calO_X)$ of the form 
\[g(\tilde{m}) = d(n) \]
where $\tilde{m} \in \calC^i(\calU,\calO_X)$ is a cocycle lifting $m$ and $n \in \calC^{i-1}(\calU,\calO_X)$. As $g$ is a monic equation, we can find a finite surjective morphism $\pi':Y' \to X$ such that $n = g(n')$ for some $n' \in \calC^i(\calU \times_X Y',\calO_{Y'})$. For example, we could do the following: for each component $n_j$ of $n$ (where $j$ is a multi-index), the scheme $V_j = \Spec(\calO(U_j)[T]/(g(T) - n_j))$ is a quasi-finite $X$-scheme such that the equation $g(n') = n$ admits a solution in $H^0(V_j,\calO_{V_j})$. Using Proposition \ref{extendcover}, we find a $Y'$ and $n'$ with the desired properties. The additivity of Frobenius now tells us that we obtain an equation in $\calC^i(\calU \times_X Y',\calO_{Y'})$ of the form 
\[g(\tilde{m} - d(n')) = 0.\]
The monicity of $g$ implies that the components of $\tilde{m} - d(n')$ are integral over $A$. Setting $Y$ to be an irreducible component of $Y' \times_{\Spec(A)} \Spec(A[T]/(g(T)))$ that dominates $Y'$ under the natural map, we find a finite surjective morphism $Y \to Y'$. The pullback of $\tilde{m} - d(n')$ in $\calC^i(\calU \times_X Y,\calO_Y)$ is a vector of local functions whose components satisfy the monic polynomial $g$ over $A$.  As $Y$ is integral and $H^0(Y,\calO_Y)$ already contains roots of $g$, it follows that these functions are globally defined. Thus, they lie in the image of the natural map $H^0(Y,\calO_Y) \to \calC^\bullet(\calU \times_X Y,\calO_Y)$ where $H^0(Y,\calO_Y)$ is viewed as a constant cosimplicial algebra. As the complex underlying the former cosimplicial algebra has cohomology only in degree $0$, it follows that $\tilde{m} - d(n')$ is a coboundary, which implies that $\tilde{m}$ is a coboundary on $Y$, which shows that $Y$ satisfies the required conditions.
\end{proof}

\begin{remark}
One may wonder whether Proposition \ref{killcohabstract} can be refined to show the existence of generically separable finite surjective maps that kill the relevant cohomology groups. In the local algebra setting, one can indeed do so by \cite[Theorem 1.3]{SinghSannai}.  Globally, however, requiring separability is too strong. For example, if $X$ is a smooth projective variety over a perfect field $k$ with $\alpha \in H^1(X,\calO_X)$ a non-zero class killed by $\Frob_X$, then for any finite surjective generically separable map $\pi:Y \to X$, one has $\pi^* \alpha \neq 0$; see \cite[Lemma 5]{MumfordPathologies3} for a proof.
\end{remark}

\begin{remark}
Proposition \ref{killcohabstract} is proven by above by mimicking the cocycle-theoretic methods of \cite{GLBigCM}. It is also possible to give more conceptual proofs of this result. We refer the reader to \cite{Bhattanngrpsch} for a proof based on general results on finite flat group schemes, and \cite{Bhattmixedcharpdiv} for a geometric proof based on curve fibrations which has the advantage of generalising to mixed characteristic.
\end{remark}

As a corollary of Proposition \ref{killcohabstract} and the finiteness properties enjoyed by proper morphisms, we arrive at the following result:

\begin{corollary}
\label{killcoh}
Let $f:X \to S$ be a proper morphism of $\F_p$-schemes, with $S$ noetherian and affine. Then there exists a finite surjective morphism $\pi:Y \to X$ such that $\pi^*:H^i(X,\calO_X) \to H^i(Y,\calO_Y)$ is $0$ for $i > 0$.
\end{corollary}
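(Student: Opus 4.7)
The plan is to induct on the cohomological degree, using Proposition \ref{killcohabstract} to kill one degree at a time while preserving the vanishing in lower degrees. Write $S = \Spec(A)$. Since $f: X \to S$ is proper with $S$ noetherian, Grothendieck's finiteness theorem gives that $H^i(X, \calO_X) = \Gamma(S, R^i f_* \calO_X)$ is a finitely generated $A$-module for every $i \geq 0$. Moreover, $X$ is noetherian of finite Krull dimension (being of finite type over $S$), so Grothendieck's vanishing theorem gives $H^i(X, \calO_X) = 0$ for $i > d := \dim X$. The same applies to any finite cover of $X$, since finite covers of $X$ are again proper over $S$ of dimension at most $d$.

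The main construction is to inductively build, for $0 \leq n \leq d$, a finite surjective morphism $\pi_n: Y_n \to X$ such that $\pi_n^*: H^i(X, \calO_X) \to H^i(Y_n, \calO_{Y_n})$ is zero for all $1 \leq i \leq n$. Take $\pi_0 = \id_X$ as the base case. For the inductive step, suppose $\pi_n: Y_n \to X$ has been constructed. Since $Y_n$ is proper over $S$ (being finite over the proper $X$-scheme $X$), the module $H^{n+1}(Y_n, \calO_{Y_n})$ is finitely generated over $A$. Let $M_n \subset H^{n+1}(Y_n, \calO_{Y_n})$ be the image of $\pi_n^*$; this is an $A$-finite submodule, and it is Frobenius-stable because pullback and Frobenius commute. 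Apply Proposition \ref{killcohabstract} to $Y_n$ (with respect to $A$) and the submodule $M_n$ to obtain a finite surjective morphism $\rho: Y_{n+1} \to Y_n$ with $\rho^*(M_n) = 0$. Set $\pi_{n+1} := \pi_n \circ \rho$, which is finite and surjective. By construction, $\pi_{n+1}^* = \rho^* \circ \pi_n^*$ vanishes on $H^{n+1}(X, \calO_X)$, and for $1 \leq i \leq n$ it vanishes because $\pi_n^*$ already does so at the level of $Y_n$.

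After $n = d$ iterations, the morphism $\pi := \pi_d: Y_d \to X$ is finite surjective and satisfies $\pi^* = 0$ on $H^i(X, \calO_X)$ for all $1 \leq i \leq d$; for $i > d$ both source and target are zero by the vanishing above, so the conclusion holds trivially. This gives the desired cover.

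The main point is simply to package Proposition \ref{killcohabstract} with the standard finiteness and vanishing theorems for cohomology of proper morphisms; the only subtlety is to notice that killing higher cohomology on $Y_n$ does not disturb the vanishing already achieved in lower degrees, which is automatic since pullback of zero is zero.
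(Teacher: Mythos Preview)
Your argument is correct and follows essentially the same route as the paper: combine the finiteness of the $H^i(X,\calO_X)$ as $A$-modules with the vanishing of cohomology in large degrees, and then repeatedly apply Proposition~\ref{killcohabstract} to kill one degree at a time. The paper's proof records only these two inputs and leaves the iteration implicit; you have spelled it out carefully, including the observation that vanishing in lower degrees persists under further finite covers.

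One small technicality: the claim that $X$ has finite Krull dimension because it is of finite type over a noetherian affine $S$ is not literally valid, since noetherian rings can have infinite Krull dimension. This does not affect your proof, however: what you actually need is that $H^i(X,\calO_X) = 0$ for $i$ large, and this holds for any noetherian (or quasi-compact separated) scheme because $X$ admits a finite affine open cover, which also bounds the cohomological dimension of every finite cover $Y_n$ uniformly (preimages of affines under finite maps are affine). Replacing ``$d = \dim X$'' by ``$d$ such that $H^i(X,\calO_X) = 0$ for $i > d$'' fixes this.
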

\begin{proof}
The properness of $X$ over an affine implies that $H^i(X,\calO_X)$ is a finite $H^0(X,\calO_X)$-module and that $H^i(X,\calO_X) = 0$ for $i$ sufficiently large (see \cite[Corollaire 3.2.3]{EGA3_1}). Proposition \ref{killcohabstract} then finishes the proof.
\end{proof}
We will now finish the proof of Theorem \ref{killdercoh}. To pass from the conclusion of Corollary \ref{killcoh} to the general statement of Theorem \ref{killdercoh}, the obvious strategy is to cover $S$ with affines, construct covers that work over the affines, and take the normalisation of $X$ in the fibre product of all of these. When carried out, this process produces a finite cover $\pi:Y \to X$ such that, with $g = f \circ \pi$, the maps $R^i f_*\calO_X \to R^i g_*\calO_Y$ are $0$ for $i > 0$. This is not quite enough to prove the theorem: a map in $\D(\Coh(S))$ that induces the $0$ map on cohomology sheaves is not necessarily zero. However, with the boundedness conditions enforced by properness, a sufficiently high iteration of this process turns out to be enough.

\begin{proof}[Proof of Theorem \ref{killdercoh}]
Fix a finite affine covering $\calU = \{U_i\}$ of $S$, and denote $X \times_S U_i$ by $X_i$. Using Corollary \ref{killcoh}, we can find finite surjective maps $\phi_i:Z_i \to X_i$ such that the induced map $H^j(X_i,\calO_{X_i}) \to H^j(Z_i,\calO_{Z_i})$ is $0$ for each $j > 0$. Using Proposition \ref{extendcover}, we may find a finite surjective morphism $\phi:Z \to X$ such that $\phi_{U_i}$ factors through $\phi_i$. This implies that $\R^j f_*\calO_X \to \R^j (f \circ \phi)_* \calO_Z$ is $0$ for each $j$ (as vanishing is a local statement on $S$). Iterating this construction $\dim(X)$ times and using Lemma \ref{dercatlemma}, we obtain a proper $S$-scheme $g:Y \to S$ and a finite surjective $S$-morphism $\pi:Y \to X$ such the natural pullback  map $\pi^*:\tau_{\geq 1} \R f_*\calO_X \to \tau_{\geq 1} \R g_*\calO_Y$ is $0$, thereby proving the theorem.
\end{proof}

Finally, having proven Theorem \ref{killdercoh}, we point out how Theorem \ref{ddsct} follows. 

\begin{proof}[Proof of Theorem \ref{ddsct}]
It is clear that all D-splinters are also splinters. Conversely, let $S$ be splinter over $\F_p$, and let $f:X \to S$ be a proper surjective morphism. By Theorem \ref{killdercoh}, there exists a finite surjective morphism $\pi:Y \to X$ such that, with $g = f \circ \pi$, the pullback map $\tau_{\geq 1} \R f_*\calO_X \to \tau_{\geq 1} \R g_*\calO_Y$ is $0$. By applying $\Hom(\R f_*\calO_X,-)$ to the exact triangle
\[ g_* \calO_Y \to \R g_*\calO_Y \to \tau_{\geq 1} \R g_*\calO_Y \to g_*\calO_Y[1] \]
we see that the natural pullback map $\R f_*\calO_X \to \R g_*\calO_Y$ factors through $g_*\calO_Y \to \R g_*\calO_Y$; choose some factorisation $s:\R f_* \calO_X \to g_* \calO_Y$. As $g:Y \to S$ is a proper surjective morphism, the algebra $g_* \calO_Y$ is a coherent sheaf of algebras corresponding to the structure sheaf of a finite surjective morphism. By assumption, the natural map $\calO_S \to g_*\calO_Y$ has a splitting $s$, and thus the map $s \circ t$ splits $\calO_S \to \R f_*\calO_X$.
\end{proof}

\section{Some refinements}
\label{sec:poscharcommentary}

Roughly speaking, Theorem \ref{killdercoh} says that proper morphisms behave like finite morphisms after passage to finite covers, at least as far as theorems concerning the annihilation of coherent sheaf cohomology. In the following proposition, we formalise this intuition, extract a kind of ``converse'' to this statement, and work with non-trivial coefficients. These results will be useful in the sequel when we prove vanishing results. 

\begin{proposition}
\label{killwithh}
Let $S$ be a noetherian $\F_p$-scheme, and $f:X \to S$ be a proper surjective morphism. Then we can find a diagram
\[ \xymatrix{ Y \ar[r]^\pi \ar[d]^a \ar[rd]^g & X \ar[d]^f \\ S' \ar[r]^h & S } \]
with $\pi$ and $h$ finite surjective morphisms such that for every locally free sheaf $\calM$ on $S$ and every $i \geq 0$, we have:
\begin{enumerate}
\item The morphism $h^*:H^i(S,\calM) \to H^i(S',h^*\calM)$ factors through $f^*:H^i(S,\calM) \to H^i(X,f^*\calM)$.
\item The morphism $\pi^*:H^i(X,f^*\calM) \to H^i(Y,g^*\calM)$ factors through $a^*:H^i(S',h^*\calM) \to H^i(Y,g^*\calM)$.
\end{enumerate}
\end{proposition}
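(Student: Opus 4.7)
The plan is to apply Theorem \ref{killdercoh} to $f$ to produce $\pi$, obtain $h:S' \to S$ as the Stein factorization of $g = f\circ\pi$, and then use the vanishing of $\tau_{\geq 1}\pi^*$ to extract a single intermediate map at the level of derived pushforwards that witnesses both factorizations simultaneously.

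First, I would invoke Theorem \ref{killdercoh} to obtain a proper $g:Y \to S$ and a finite surjective $\pi:Y \to X$ with $g = f\circ\pi$ such that the map $\tau_{\geq 1}\pi^*:\tau_{\geq 1}\R f_*\calO_X \to \tau_{\geq 1}\R g_*\calO_Y$ is zero. Since $g$ is proper, its Stein factorization $Y \xrightarrow{a} S' \xrightarrow{h} S$ with $S' = \Spec_S(g_*\calO_Y)$ provides the required $h$: it is finite (because $g_*\calO_Y$ is coherent), surjective (since $g$ is the composite of the surjections $f$ and $\pi$), and by construction $h\circ a = g = f\circ\pi$, so the diagram in the statement commutes. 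Note $g_*\calO_Y = h_*\calO_{S'}$ because $a_*\calO_Y = \calO_{S'}$.

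The key construction is an intermediate arrow at the level of derived pushforwards. Exactly as in the proof of Theorem \ref{ddsct}, the vanishing of $\tau_{\geq 1}\pi^*$ lets me factor $\pi^*:\R f_*\calO_X \to \R g_*\calO_Y$ through the truncation $g_*\calO_Y \to \R g_*\calO_Y$; fix such a factorization $s:\R f_*\calO_X \to g_*\calO_Y = h_*\calO_{S'}$. For $\calM$ locally free, the projection formula (derived tensor reducing to ordinary tensor by local freeness of $\calM$) identifies
$$\R f_*f^*\calM \simeq \calM \otimes \R f_*\calO_X, \qquad \R h_*h^*\calM \simeq \calM \otimes h_*\calO_{S'}, \qquad \R g_*g^*\calM \simeq \calM \otimes \R g_*\calO_Y,$$
so tensoring $s$ with $\calM$ and applying $H^i(S,-)$ yields a single map $t:H^i(X, f^*\calM) \to H^i(S', h^*\calM)$, which I claim factors both $h^*$ and $\pi^*$.

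Claim (2) follows immediately: by choice of $s$, postcomposition with $h_*\calO_{S'} \to \R g_*\calO_Y$ recovers $\pi^*$, so after tensoring with $\calM$ and taking $H^i$, the composite $a^*\circ t$ equals $\pi^*:H^i(X,f^*\calM) \to H^i(Y,g^*\calM)$. The subtler piece is claim (1), which reduces to checking that the composite $\calO_S \to \R f_*\calO_X \xrightarrow{s} h_*\calO_{S'}$ coincides with the unit of $h$, so that after tensoring with $\calM$ and taking $H^i$, the composite $t\circ f^*$ recovers $h^*$. The obstacle is that $s$ is not canonical; the remedy is that both candidates agree after postcomposition with $h_*\calO_{S'} \to \R g_*\calO_Y$ (both become the unit of $g$ by construction of $s$), and the induced map $\Hom(\calO_S,h_*\calO_{S'}) \to \Hom(\calO_S,\R g_*\calO_Y)$ is injective because the preceding term $\Hom(\calO_S,(\tau_{\geq 1}\R g_*\calO_Y)[-1])$ in the long exact sequence vanishes for degree reasons. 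This derived-categorical uniqueness step is the one real technical point; the rest is bookkeeping with the projection formula and functoriality of Stein factorization.
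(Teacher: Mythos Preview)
Your proof is correct and essentially identical to the paper's own argument: both apply Theorem \ref{killdercoh} to obtain $s:\R f_*\calO_X \to g_*\calO_Y$, take $S'$ to be the Stein factorization of $g$, verify that the triangle based at $\calO_S$ commutes via the injectivity of $\Hom(\calO_S,g_*\calO_Y) \to \Hom(\calO_S,\R g_*\calO_Y)$ (you phrase this via the long exact sequence, the paper via adjunction for $\tau_{\leq 0}$; these are the same), and then tensor with $\calM$ using the projection formula. Your write-up is slightly more explicit about separating the two factorizations, but there is no substantive difference.
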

\begin{proof}
Theorem \ref{killdercoh} gives a finite surjective morphism $\pi:Y \to X$ such that, with $g = f \circ \pi$, we have a map $s$ and the following diagram:
\[ \xymatrix{
	\calO_S \ar[r] \ar[d] & g_*\calO_Y \ar[d] \\
	\R f_*\calO_X \ar[r] \ar[ur]^s & \R g_*\calO_Y } \]
We claim that this is a commutative diagram. The triangle based at $\R g_*\calO_Y$ commutes by construction. Given this commutativity, to see that the triangle based at $\calO_S$ commutes, it suffices to show that $\Hom(\calO_S,g_*\calO_Y) \to \Hom(\calO_S,\R g_*\calO_Y)$ is injective. This injectivity (and, in fact, bijectivity) follows from adjunction for $\tau_{\leq 0}$. Thus, the preceding diagram is a commutative diagram in $\D(\Coh(S))$. Applying $- \otimes \calM$, setting $S'$ to be the Stein factorisation of $Y \to S$, and using the projection formula now gives the desired result.
\end{proof}

We have not strived to find the most general setting for Theorem \ref{killdercoh}. For example, one can easily extend the theorem to algebraic spaces or even Deligne-Mumford stacks. On the other hand, the properness hypothesis seems essential as the example below shows. In fact, the method of the proof shows that the essential property we use is that the relative cohomology classes of the structure sheaf for $f:X \to S$ are annihilated by a monic polynomial in Frobenius. We do not know if there is a better characterisation of this class of maps.

\begin{example}
\label{needproper}
Fix a base field $k$. Let $X = \A^2$, and $U = \A^2 - \{0\}$. The quotient map $U \to U/\G_m = \P^1$ gives a natural identification $H^1(U,\calO_U) = \oplus_{i \in \Z} H^1(\P^1,\calO(i))$. We claim that the non-zero classes in this group cannot be killed by a finite cover of $U$. To see this, note that one may view $H^1(U,\calO_U)$ as the local cohomology group $H^2_{\{0\}}(X,\calO_U) = H^2_\fm(R)$, where $R = k[x,y]$ is the coordinate ring of $X$ and $\fm = (x,y)$ is the maximal ideal corresponding to the origin. Given a finite surjective morphism $\pi:Y \to U$, we may normalise $X$ in $\pi$ to obtain a finite surjective morphism $\overline{\pi}:\overline{Y} \to X$ which realises $\pi$ as the fibre over $U$. As before, the cohomology group $H^1(Y,\calO_Y)$ can be viewed as $H^2_{\overline{Y} \setminus Y}(\overline{Y},\calO_{\overline{Y}})$ which, in turn, may be viewed as $H^2_\fm(S)$, where $S$ is the coordinate ring of $\overline{Y}$ considered as an $R$-module in the natural way. Under these identifications, the pullback map $H^1(U,\calO_U) \to H^1(Y,\calO_Y)$ corresponds to the morphism $H^2_\fm(R) \to H^2_\fm(S)$ induced by the inclusion $R \to S$ coming from $\overline{\pi}$. By Example \ref{ex:smdsc}, the inclusion $R \to S$ is a direct summand as an $R$-module map. In particular, the map $H^2_\fm(R) \to H^2_\fm(S)$ is injective, which shows that the non-zero classes in $H^1(U,\calO_U)$ persist after passage to finite covers.
\end{example}

\section{Application: A result in commutative algebra}
\label{sec:poscharappalg}

We discuss some applications of Proposition \ref{killcohabstract} to commutative algebra. Most of these applications are implicit in \cite{GLBigCM}. The first result we want to dicuss is an analogue of Proposition \ref{killcohabstract} for local cohomology.

\begin{proposition}
\label{killcohlocal}
Let $(R,\fm)$ be an excellent local noetherian $\F_p$-algebra such that $R$ is finite over some ring $A$. For any $A$-finite Frobenius-stable submodule $M \in H^i_\fm(R)$ with $i \geq 1$, there exists a finite surjective morphism $f:\Spec(S) \to \Spec(R)$ such that $f^*(M) = 0$.
\end{proposition}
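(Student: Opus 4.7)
The plan is to adapt the proof of Proposition \ref{killcohabstract} to the local cohomology setting, replacing the global Cech complex of an affine open cover with the Cech complex computing $H^*_\fm$. Fix generators $\fm = (x_1, \ldots, x_d)$ and let $\check{C}^\bullet = \check{C}^\bullet(x_1, \ldots, x_d; R)$ denote the augmented Cech complex, so $H^k_\fm(R) = H^k(\check{C}^\bullet)$; Frobenius acts termwise by $p$th-powers, making $\check{C}^\bullet$ an $A\{X^p\}$-module compatibly with the differential. By $A$-finiteness of $M$ it suffices to kill one class $m \in M$ at a time (taking a composite of the resulting finite covers for the full claim), and a Cayley--Hamilton argument for the Frobenius action on the finite $A$-module $M$ produces a monic polynomial $g \in A\{X^p\}$ with $g(m) = 0$. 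After a harmless preliminary reduction to arrange that the distinguished opens $D(x_J) \subset \Spec(R)$ are dense (for instance, by passing to a suitable finite extension and adjusting generators by prime avoidance), lift $m$ to a cocycle $\tilde{m} \in \check{C}^i$; the equation $g(m) = 0$ at the cohomology level promotes to $g(\tilde{m}) = d(n)$ at the cocycle level for some $(i-1)$-cochain $n$ with components $n_J \in R_{x_J}$.

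The next step solves for monic roots of $g(T) = n_J$ after a finite cover. Monicity of $g$ ensures that each $\Spec\bigl(R_{x_J}[T]/(g(T) - n_J)\bigr)$ is finite over $D(x_J)$, hence quasi-finite over $\Spec(R)$. By Proposition \ref{extendcover} and a fiber product over the finitely many multi-indices $J$, we construct a finite surjective morphism $\Spec(R') \to \Spec(R)$ over which each $n'_J \in R'_{x_J}$ satisfying $g(n'_J) = n_J$ exists. The additivity of Frobenius in $A\{X^p\}$ then gives $g(\tilde{m} - d(n')) = 0$ in $\check{C}^i(x; R')$, and monicity of $g$ over $A$ forces every component of $\tilde{m} - d(n')$ to be integral over $A$, and hence over the subring $R' \supset A$.

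To conclude, replace $R'$ by its normalization, which remains finite over $R$ by the excellence hypothesis; we may further arrange that $R'$ is a normal domain. Each component of $\tilde{m} - d(n')$, being an element of $R'_{x_J} \subset \Frac(R')$ that is integral over $R'$, must lie in $R'$ itself. Consequently $\tilde{m} - d(n')$ belongs to the subcomplex $\check{C}^\bullet_{R'} \subset \check{C}^\bullet(x; R')$ whose components come from $R'$ via the diagonal embedding $R' \hookrightarrow R'_{x_J}$. This subcomplex is precisely the augmented Cech complex of the constant sheaf $\underline{R'}$ on the cover $\{D(x_j)\}$ of the punctured spectrum $\Spec(R') \setminus V(\fm R')$, and is acyclic: the constant sheaf on an irreducible noetherian scheme is flasque and thus has trivial Zariski cohomology in positive degrees, while the augmentation kills the degree-$0$ contribution. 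Hence $\tilde{m} - d(n')$ is a coboundary in the subcomplex, $\tilde{m}$ is a coboundary in $\check{C}^\bullet(x; R')$, and $m$ vanishes in $H^i_{\fm R'}(R')$. Taking $f \colon \Spec(R') \to \Spec(R)$ and iterating over generators of $M$ delivers the required finite surjective morphism.

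The main obstacle is the preliminary reduction step: ensuring simultaneously that the distinguished opens $D(x_J)$ are dense (needed to invoke Proposition \ref{extendcover}) and that successive normalizations remain finite (which is where excellence is essential), while tracking the $A$-finite Frobenius-stable submodule $M$ compatibly through these extensions. Once this bookkeeping is in place, the argument is a direct translation of the cocycle-theoretic proof of Proposition \ref{killcohabstract} into the Cech complex computing local cohomology.
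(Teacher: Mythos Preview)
Your argument is essentially correct, but it takes a genuinely different route from the paper's proof. The paper does \emph{not} rerun the cocycle argument in the local \v{C}ech complex. Instead, after passing to the normalisation (so $H^0_\fm = H^1_\fm = 0$), it invokes the Frobenius-equivariant identification $H^i_\fm(R) \simeq H^{i-1}(U,\calO_U)$ for the punctured spectrum $U = \Spec(R) \setminus \{\fm\}$, and then applies Proposition~\ref{killcohabstract} directly to the scheme $U$ (whose $H^0$ equals $R$ by the $S_2$ property, hence is $A$-finite). A finite surjective cover $V \to U$ killing the class is then spread out to $\Spec(S) \to \Spec(R)$ by normalising $R$ in $V$. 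This is a three-line reduction to a result already on the shelf.

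Your approach trades that reduction for a direct transplant of the cocycle mechanism into the stable Koszul/\v{C}ech complex computing $H^*_\fm$. The gain is that you never need to identify local cohomology with sheaf cohomology of $U$, and the acyclicity of your ``constant subcomplex'' (the augmented simplex tensored with $R'$) is a pleasant replacement for the $H^0$-argument in Proposition~\ref{killcohabstract}. The cost is exactly the bookkeeping you flag: you must first reduce to $R$ an integral domain (so the $D(x_J)$ are dense and Proposition~\ref{extendcover} applies, and so the later inclusion $R'_{x_J} \subset \Frac(R')$ makes sense), and you must keep the covers finite through successive normalisations via excellence. None of this is hard, but it is precisely what the paper's proof sidesteps by working on $U$ from the outset. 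Both approaches ultimately rely on the same monic-polynomial-in-Frobenius trick; the paper just packages the geometry so that the earlier proposition can be quoted verbatim.
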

\begin{proof}
Since $R$ is excellent, we may pass to the normalisation and assume that $R$ is normal. In particular, $H^i_\fm(R) = 0$ for $i = 0,1$. For $i > 1$, we have an Frobenius equivariant identification $\delta:H^{i-1}(U,\calO_U) \simeq H^i_\fm(R)$, where $U = \Spec(R) - \{\fm\}$ is the punctured spectrum of $R$. Since $i > 1$, Proposition \ref{killcohabstract} gives us a finite surjective morphism $f:V \to U$ such that $f^*(\delta^{-1}(M)) = 0$. Setting $S$ to be the normalisation of $R$ in $V$ is then easily seen to do job.
\end{proof}

Next, we dualise the Proposition \ref{killcohlocal} to obtain a global result in terms of dualising sheaves.

\begin{proposition}
\label{killcohdual}
Let $X$ be an excellent noetherian $\F_p$-scheme of equidimension $d$ that admits a dualising complex $\omega_X^\bullet$. Then there exists a finite surjective morphism $\pi:Y \to X$ such that $\tau_{> -d}(\Tr_\pi) = 0$, where $\Tr_\pi$ is the trace map $\Tr_\pi:\pi_*\omega^\bullet_Y \to \omega^\bullet_X$.
\end{proposition}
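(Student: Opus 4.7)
\emph{Plan.} The approach is to dualise the strategy of Theorem \ref{killdercoh}. By local (Grothendieck--Matlis) duality, the condition ``$\tau_{>-d}(\Tr_\pi) = 0$'' corresponds, stalkwise at closed points of $X$, to the vanishing of the natural pullback on local cohomology in degrees strictly below the top degree $d$, which is exactly what Proposition \ref{killcohlocal} delivers. The problem is Zariski-local on $X$---one can find suitable finite covers over each member of a finite affine open cover and then glue using Proposition \ref{extendcover} and iterate using Lemma \ref{dercatlemma}, as in the proof of Theorem \ref{killdercoh}---so one reduces to $X = \Spec(R)$ with $R$ excellent, noetherian, and equidimensional of dimension $d$; after normalising (a finite cover), one may also assume $R$ is normal.

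\emph{Local duality step.} For a finite surjective $\pi: Y = \Spec(S) \to X$, Grothendieck duality for finite morphisms identifies $\pi_*\omega_Y^\bullet$ with $\R\Hom_R(S,\omega_R^\bullet)$ and the trace $\Tr_\pi$ with the evaluation-at-$1$ map $\R\Hom_R(S,\omega_R^\bullet) \to \omega_R^\bullet$. At a closed point $x \in X$ (of codimension $d$ by equidimensionality), local duality---applied after completion at $x$---identifies $\mathcal{H}^i(\Tr_\pi)_{\hat{x}}$ for $-d < i \leq 0$ with the Matlis dual of the pullback map $H^{-i}_{\fm_x}(\hat{R}_x) \to \bigoplus_{y \in \pi^{-1}(x)} H^{-i}_{\fm_y}(\hat{S}_y)$ on local cohomology. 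Since $-d < i \leq 0$, the degree $-i$ lies in $[0, d-1]$, strictly below the top local cohomology degree.

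\emph{Killing cohomology sheaves and iterating.} Thus for each closed $x$ in the support of $\tau_{>-d}\omega_X^\bullet$ and each $0 \leq j < d$, Proposition \ref{killcohlocal} furnishes a finite surjective cover of $\Spec(\hat{R}_x)$ killing the $A$-finite Frobenius-stable module $H^j_{\fm_x}(\hat{R}_x)$ (the $A$-finiteness coming from excellence and the Cohen structure theorem, which presents $\hat{R}_x$ as finite over a complete regular local $\F_p$-algebra). Combining this with Noetherian induction on the support of the coherent sheaves $\mathcal{H}^i(\omega_X^\bullet)$ for $-d < i \leq 0$ (each a proper closed subset of $X$, since $\omega_X^\bullet$ is Cohen--Macaulay at the generic points of $X$) and with Proposition \ref{extendcover} to globalise, one produces a single finite surjective $\pi: Y \to X$ with $\mathcal{H}^i(\Tr_\pi) = 0$ for all $-d < i \leq 0$. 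Finally, since $\tau_{>-d}\omega_X^\bullet$ has cohomology in at most $d$ consecutive degrees, iterating this construction $d$ times and applying Lemma \ref{dercatlemma} upgrades cohomology-sheaf vanishing to the derived-category statement $\tau_{>-d}(\Tr_\pi) = 0$.

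\emph{Main obstacle.} The crux is the local-duality translation in the second paragraph, which turns a global derived-category map into a family of local-cohomology pullback maps to which Proposition \ref{killcohlocal} can be applied. A secondary subtlety is the passage from covers of formal completions to honest finite covers of open subsets of $X$; this can be handled by approximation, or by arguing directly with suitable $A$-finite global modules in the spirit of Proposition \ref{killcohabstract}. Once this translation is in hand, the rest of the argument parallels---in dualised form---the proof of Theorem \ref{killdercoh}.
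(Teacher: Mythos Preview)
Your overall strategy---dualise via Grothendieck and local duality to reduce to statements about local cohomology, apply Proposition \ref{killcohlocal}, then iterate using Lemma \ref{dercatlemma}---is exactly the paper's approach. However, there is a genuine gap in your finiteness argument.

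You assert that for a closed point $x$, the module $H^j_{\fm_x}(\hat{R}_x)$ is $A$-finite for $0 \le j < d$, ``the $A$-finiteness coming from excellence and the Cohen structure theorem.'' This is false in general: local cohomology modules $H^j_\fm(R)$ for $j < d$ are typically Artinian but \emph{not} Noetherian, and presenting $\hat{R}_x$ as module-finite over a regular local ring does nothing to change that. The condition that $H^j_\fm(R)$ have finite length for $j < d$ is equivalent (by local duality) to $R$ being Cohen--Macaulay on the punctured spectrum, which is not assumed. So Proposition \ref{killcohlocal} cannot be applied to all of $H^j_{\fm_x}(\hat{R}_x)$ directly, and your Noetherian induction on the support of $\mathcal{H}^i(\omega_X^\bullet)$ does not repair this: that support need not be equidimensional of dimension $d$, so the induction hypothesis as stated does not apply to it.

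The paper closes this gap by running induction on $\dim(X)$ rather than Noetherian induction on supports. After reducing to a local domain $(R,\fm)$ and fixing $i>0$, one applies the inductive hypothesis at each non-maximal prime $\fp$ (where $\dim R_\fp < d$) to produce finite covers killing $\mathcal{H}^{-d+i}(\Tr)$ away from the closed point; normalising in the fibre product over the finitely many associated primes of $\mathcal{H}^{-d+i}(\omega_R^\bullet)$ yields a single cover $\Spec(S)\to\Spec(R)$ for which the image of $H^{d-i}_\fm(R)\to H^{d-i}_\fm(S)$ is supported only at $\fm$ and hence has \emph{finite length}. Only at this point does Proposition \ref{killcohlocal} apply. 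This is precisely the Grothendieck finiteness mechanism alluded to in the remark following Corollary \ref{dsciscm}, and it is the step your sketch is missing.
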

\begin{proof}
Fix an integer $i > 0$. We will prove by induction on the dimension $d = \dim(X)$ that there exists a finite surjective morphism $\pi':Y' \to X$ such that $\calH^{-d+i}(\Tr_{\pi'}) = 0$; this is enough by virture of Lemma \ref{dercatlemma} and the fact that the dualising complexes appearing have bounded amplitude. We may assume that $d > 0$ as the there is nothing to prove when the dimension is $0$. By passing to irreducible components, we may even assume that $X$ is integral. As vanishing of a map of sheaves is a local statement, we reduce to the case that $X$ is an excellent noetherian local $\F_p$-domain $(R,\fm)$ admitting a dualising complex. For each non-maximal $\fp \in \Spec(R)$, we can inductively find a finite morphism $\pi_\fp:Y_\fp \to \Spec(R_\fp)$ such that $\calH^{-d_{R_\fp} + i}(\Tr_{\pi_\fp})$ is the $0$ map. By duality formalism, the $R$-module $\calH^{-d + i}(\omega_R^\bullet)$ localises to $\calH^{-d_{\R_\fp} + i}(\omega_{R_\fp}^\bullet)$ at $\fp$. Hence, the normalisation $\overline{\pi_\fp}:\overline{Y_\fp} \to X$ induces the $0$ map on $\calH^{-d + i}(\Tr_\pi)$ when localised at $\fp$. Finding such a cover for each non-maximal prime $\fp$ in the finite set of associated primes of $\calH^{-d+i}(\omega_R^\bullet)$ and normalising $X$ in the fibre product of the resulting collection, we find a cover $\pi:Y \to X$ such that $\calH^{-d+i}(\Tr_\pi)$ has an image supported only at the closed point. Setting $Y = \Spec(S)$, duality tells us that the image $M$ of $H^{d-i}_\fm(R) \to H^{d-i}_\fm(S)$ is a finite length Frobenius-stable $R$-submodule. Proposition \ref{killcohlocal} then allows us to find a finite surjective morphism $g:\Spec(T) \to \Spec(S)$ such that $g^*(M) = 0$. It follows that the composite map $\pi':\Spec(T) \to \Spec(R)$ induces the $0$ map on $H^{d-i}_\fm(R)$. By duality, we see that $\calH^{-d+i}(\Tr_{\pi'}) = 0$ as desired.
\end{proof}

Using Proposition \ref{killcohlocal}, we discover that splinters are automatically Cohen-Macaulay. 

\begin{corollary}
\label{dsciscm}
Let $(R,\fm)$ is an excellent noetherian local $\F_p$-algebra that is a splinter. Assume that $R$ admits a dualising complex. Then $R$ is a normal Cohen-Macaulay domain.
\end{corollary}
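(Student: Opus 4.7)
The plan is to extract normality from the definition directly, and Cohen-Macaulayness from Proposition~\ref{killcohdual} combined with Grothendieck duality for the finite morphism it produces.

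For normality, the arguments of Example~\ref{ex:char0} go through in positive characteristic with no change: applying the splinter property to the finite surjective map $\Spec(\prod_i R/\fp_i) \to \Spec R$ indexed by the minimal primes of $R$ forces $R$ to be a domain, and then applying it to the finite surjective morphism $\Spec R[a/b] \to \Spec R$ for any $a/b \in \Frac(R)$ integral over $R$ forces the cokernel of $R \to R[a/b]$ -- which is an $R$-submodule of $\Frac(R)$, hence torsion-free, yet has generic rank zero -- to vanish. So $R$ is integrally closed in $\Frac(R)$.

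For the Cohen-Macaulay property, set $X = \Spec R$ and $d = \dim R$. The task is to show $\tau_{>-d}\omega_X^\bullet = 0$; by local duality on the excellent local ring $R$ admitting a dualising complex, this is equivalent to $H^j_\fm(R) = 0$ for all $j < d$, i.e.\ to $R$ being Cohen-Macaulay. Since $R$ is a local (hence irreducible) normal domain, $X$ is equidimensional of dimension $d$, so Proposition~\ref{killcohdual} supplies a finite surjective morphism $\pi: Y \to X$ such that the truncated trace $\tau_{>-d}(\Tr_\pi) : \tau_{>-d}\pi_*\omega_Y^\bullet \to \tau_{>-d}\omega_X^\bullet$ is the zero map. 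Meanwhile, the splinter hypothesis applied to $\pi$ yields a retraction $s : \pi_*\calO_Y \to \calO_X$ of the canonical pullback map $\calO_X \to \pi_*\calO_Y$.

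The crux of the argument is to dualise the retraction $s$. Grothendieck duality for the finite morphism $\pi$ provides a canonical isomorphism $\R\calHom_{\calO_X}(\pi_*\calO_Y,\omega_X^\bullet) \simeq \pi_*\omega_Y^\bullet$ under which the morphism dual to the canonical pullback $\calO_X \to \pi_*\calO_Y$ is exactly $\Tr_\pi$. Applying $\R\calHom(-,\omega_X^\bullet)$ to $s$ therefore produces a section $s^\vee : \omega_X^\bullet \to \pi_*\omega_Y^\bullet$ of $\Tr_\pi$ in $\D(\Coh(X))$, exhibiting $\omega_X^\bullet$ as a direct summand of $\pi_*\omega_Y^\bullet$ with projection $\Tr_\pi$. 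Since truncation is an additive functor, applying $\tau_{>-d}$ preserves this direct summand decomposition, but the projection becomes $\tau_{>-d}(\Tr_\pi) = 0$; the only way $\tau_{>-d}\omega_X^\bullet$ can sit as a summand via a zero projection is if it vanishes. The main technical point is the identification in coherent duality of the abstract retract-dual with the geometric trace $\Tr_\pi$ used in Proposition~\ref{killcohdual}; this is standard for finite morphisms but is the sole nontrivial input beyond the results already established.
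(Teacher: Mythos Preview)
Your proof is correct and follows essentially the same approach as the paper: normality via Example~\ref{ex:char0}, then Cohen--Macaulayness by dualising the splinter retraction through $\R\calHom(-,\omega_X^\bullet)$ to exhibit $\Tr_\pi$ as a projection onto a summand, and concluding from $\tau_{>-d}(\Tr_\pi)=0$ that $\tau_{>-d}\omega_X^\bullet=0$. The only difference is cosmetic---you work with the truncation $\tau_{>-d}$ directly while the paper phrases things in terms of the individual cohomology sheaves $\calH^{-d+k}$---and you are more explicit about the Grothendieck duality identification of the dual of pullback with the trace, which the paper takes for granted.
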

\begin{proof}
The normality of $R$ follows from the argument in Example \ref{ex:char0}. To verify that $R$ is Cohen-Macaulay, it suffices to show that $\omega^\bullet_R$ is concenctrated in degree $d$ where $d = \dim(R)$, i.e., that $\calH^{-d+k}(\omega_R^\bullet) = 0$ for $k >  0$. By Proposition \ref{killcohdual}, we can find a finite surjective morphism $\pi:\Spec(S) \to \Spec(R)$ such that $\calH^{-d+k}(\Tr_\pi) = 0$, where $\Tr_\pi:\pi_*\omega_S^\bullet \to \omega_R^\bullet$. Since $R$ is a splinter, the inclusion $R \to S$ is a direct summand. Applying $\R\calHom(-,\omega_R^\bullet)$, we see that the trace map $\Tr_\pi$ is the projection onto a summand. Hence, the assumption that $\calH^{-d+k}(\Tr_\pi) = 0$ implies that $\calH^{-d+k}(\omega_R^\bullet) = 0$, as desired.
\end{proof}

\begin{remark}
One key ingredient in the proof of Proposition \ref{killcohdual} is the good behaviour of local cohomology and dualising sheaves with respect to localisation. This behaviour seems to first have been observed by Grothendieck in \cite[Expos\'e VIII, Th\'eor\`eme 2.1]{SGA2} where it is used to show the following: a noetherian local ring $(R,\fm)$ of dimension $d$ that is Cohen-Macaulay outside the closed point and admits a dualising complex has the property that $H^i_\fm(R)$ has finite length for $i < d$. This argument can also be found in the main theorem \cite{GLBigCM}.
\end{remark}

\section{Application: A question of Karen Smith}
\label{sec:poscharappksmith}

The main result of Hochster-Huneke \cite{HHBigCM} is a result in commutative algebra. While geometrising it in \cite{KSmithVanishing}, K. Smith arrived at the following question (see \cite{KSmithVanishingErr}):

\begin{question}
\label{smithconj}
Let $X$ be a projective variety over a field $k$ of characteristic $p$, and let $\calL$ be a ``weakly positive'' line bundle on $X$. For any $n \in \Z$ and any $0 < i < \dim(X)$, does there exist a finite surjective morphism $\pi:Y \to X$ such that $H^i(X,\calL^{\otimes n}) \to H^i(Y,\pi^*\calL^{\otimes n})$ is $0$?
\end{question}

Using the algebraic result of Hochster-Huneke \cite{HHBigCM}, one can show that if we take ``weakly positive'' to mean ample, then Question \ref{smithconj} has an affirmative answer (see Remark \ref{hhforample}). Smith had originally hoped that ``weakly positive'' could be taken to mean nef. We give some examples in the sequel to show that this cannot be the case. However, first, we prove some positive results using the theorems above.

\subsection{Positive results}

We first examine Question \ref{smithconj} in the case of positive twists. It is clear that being ample is a sufficiently positive condition for the required vanishing statement to be true: Frobenius twisting can be realised by pulling back along a finite morphism and has the effect of changing $\calL$ by $\calL^{\otimes p}$, whence Serre vanishing shows the desired result. It is natural to wonder if the result passes to the closure of the ample cone, i.e., the nef cone. We show in Example \ref{nefnotenough} that this is not the case: there exist non-torsion degree $0$ line bundles on surfaces whose middle cohomology cannot be killed by finite covers. On the other hand, Corollary \ref{killcoh} coupled with the fact that torsion line bundles can be replaced with $\calO$ on passage to a finite cover ensures that Question \ref{smithconj} has a positve answer for torsion line bundles. The necessity of the non-torsion requirement and the observation that torsion line bundles are semiample suggests the following proposition.

\begin{proposition}
\label{smithconjpos}
Let $X$ be a proper variety over a field of characteristic $p$, and let $\calL$ be a semiample line bundle on $X$. For any $i > 0$, there exists a finite surjective morphism $\pi:Y \to X$ such that the induced map $H^i(X,\calL) \to H^i(Y,\pi^*\calL)$ is $0$.
\end{proposition}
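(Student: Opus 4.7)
The plan is to reduce, via a finite cover, to the case where $\calL$ pulls back to the pullback of an ample bundle from a projective base, and then finish using Proposition \ref{killwithh} combined with Serre vanishing and a Frobenius trick.

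Semiampleness provides an integer $N \geq 1$ and a morphism $f \colon X \to Z$ with $Z$ projective and $\calL^N \cong f^*\calA$ for an ample line bundle $\calA$ on $Z$ (take $Z$ to be the image of the map defined by $|\calL^N|$). The first main step is to pass to a finite cover on which $\calA$ becomes an $N$-th power: I would construct a finite surjective $h \colon Z' \to Z$ with $h^*\calA \cong \calB^N$ for some (automatically ample) line bundle $\calB$ on $Z'$, by choosing $m$ with $\calA^m$ very ample and $\gcd(m, N) = 1$, embedding $Z \hookrightarrow \P^n$ via $\calA^m \cong \calO_Z(1)$, and forming the fibre product of $Z$ with $\P^n$ along the $Nm$-th power map $[Nm] \colon \P^n \to \P^n$; a Bezout relation $am + bN = 1$ then gives $\calB := (q^*\calO(1))^{ma} \otimes (h^*\calA)^b$ explicitly, where $q \colon Z' \to \P^n$ is the second projection, with $\calB^N \cong h^*\calA$ since $(h^*\calA)^m \cong (q^*\calO(1))^{Nm}$. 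Pulling back $f$ along $h$ yields $Y_0 = X \times_Z Z'$ with maps $\rho_1 \colon Y_0 \to X$ and $g_0 \colon Y_0 \to Z'$; the bundles $\rho_1^*\calL$ and $g_0^*\calB$ differ on $Y_0$ by a line bundle $\calT$ whose $N$-th power is trivial, and a further cyclic cover $\rho_2 \colon Y \to Y_0$ (the $\mu_N$-torsor attached to $\calT$) trivializes $\calT$. Setting $\rho = \rho_1\rho_2 \colon Y \to X$ and $g = g_0\rho_2 \colon Y \to Z'$, we obtain $\rho^*\calL \cong g^*\calB$ on $Y$.

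From there the rest is a standard manipulation. Apply Proposition \ref{killwithh} to the proper surjective $g \colon Y \to Z'$ with the locally free sheaf $\calM = \calB^{p^r}$ (for $r$ to be chosen large): this produces a finite cover $\pi \colon W \to Y$ and a finite surjective morphism $h_1 \colon S_1 \to Z'$ through which the pullback $H^i(Y, g^*\calB^{p^r}) \to H^i(W, (g\pi)^*\calB^{p^r})$ factors, via $H^i(S_1, h_1^*\calB^{p^r})$. Since $h_1^*\calB$ is ample on $S_1$ (a finite pullback of the ample $\calB$), Serre vanishing gives $H^i(S_1, h_1^*\calB^{p^r}) = 0$ for all $i > 0$ once $r$ is large enough, so the map $H^i(Y, \rho^*\calL^{p^r}) \to H^i(W, (\rho\pi)^*\calL^{p^r})$ is zero. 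To bridge back to $\calL$ itself, precompose with the absolute Frobenius $F^r \colon Y \to Y$: since $F^{r*}(\rho^*\calL) \cong (\rho^*\calL)^{p^r}$, the induced pullback $H^i(Y, \rho^*\calL) \to H^i(W, (F^r\pi)^*\rho^*\calL)$ along the finite surjective morphism $F^r \circ \pi \colon W \to Y$ vanishes; composing with $\rho$ yields a finite surjective morphism $W \to X$ annihilating $H^i(X, \calL)$, as required.

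The main obstacle is the construction of the $N$-th root cover of $\calA$ in the second step; this is elementary once one arranges $\gcd(m, N) = 1$ so that Bezout furnishes $\calB$, but it must be handled carefully (in particular to ensure $\calB$ is a genuine line bundle, not just a class in a fractional Picard group). Everything else is a combination of the projection formula (to identify $g^*\calB$ with $\rho^*\calL$ on $Y$), Proposition \ref{killwithh}, Serre vanishing for ample bundles on $S_1$, and the Frobenius identity $F^{r*}\calL \cong \calL^{p^r}$.
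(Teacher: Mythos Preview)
Your proof is correct and follows the same overall strategy as the paper: reduce to the case where $\calL$ is the pullback of an ample line bundle along a proper surjection, then combine the factorisation from Proposition~\ref{killwithh} (the paper invokes Theorem~\ref{killdercoh} directly and re-derives the factorisation), Serre vanishing on the base, and the Frobenius trick $\Frob^{r*}\calL \cong \calL^{p^r}$. The only substantive difference is in the reduction step. The paper applies the cyclic covering trick on $X$ itself: after fixing generating sections $s_1,\dots,s_k$ of $\calL^{\otimes m}$, it passes to a finite flat cover on which each $s_i$ acquires an $m$-th root, so that the pullback of $\calL$ becomes globally generated in one stroke. You instead work on the target, manufacturing an $N$-th root of $\calA$ on a finite cover $Z' \to Z$ via the $[Nm]$-power map on projective space and a Bezout identity, and then trivialise the leftover $N$-torsion discrepancy on $X \times_Z Z'$ with a second cyclic cover. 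Both routes reach the same intermediate state; the paper's is shorter and avoids the Bezout bookkeeping, while yours has the mild conceptual advantage of separating the ``root of the bundle on the base'' from the ``kill the torsion'' steps.
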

\begin{proof}
As $\calL$ is a semiample bundle, there exists some positive integer $m$ such that $\calL^{\otimes m}$ is globally generated. If we fix a basis $s_1,\dots,s_k$ for $H^0(X,\calL^{\otimes m})$, then the cyclic covering trick (see \cite[Proposition 4.1.3]{LazPos1}) ensures that there's a finite flat cover $\pi:\tilde{X} \to X$ such that $\pi^*(s_i)$ admits an $m$-th root in $H^0(\tilde{X},\pi^*\calL)$ and, consequently, $\pi^*\calL$ is globally generated. In particular, as semiamplitude is preserved under pullbacks, we may replace $X$ with $\tilde{X}$ and assume that $\calL$ arises as the pullback of an ample bundle $\calM$ under a proper surjective morphism $f:X \to S$. Furthermore, once $f:X \to S$ is fixed, to show the required vanishing statement, we may always replace $\calL$ by $\calL^{\otimes p^j}$ for $j \gg 0$  because the Frobenius morphism $\Frob_X:X \to X$ is finite surjective with $\Frob_X^*\calL = \calL^{\otimes p}$. Now the projection formula for $f$ implies that $\R f_*(\calL^{\otimes p^j}) = \R f_*\calO_X \otimes^\L_S \calM^{\otimes p^j}$. Using Theorem \ref{killdercoh}, we may find a finite surjective morphism $\pi:Y \to X$ such that, with $g = f \circ \pi$, we have a factorisation $\R f_*(\calL^{\otimes p^j}) \to g_*f^*(\calL^{\otimes p^j}) \to \R g_*\pi^*(\calL^{\otimes p^j})$ of the natural map $\pi^*:\R f_*(\calL^{\otimes p^j}) \to \R g_*\pi^*(\calL^{\otimes p^j})$. Applying  $H^i(S,-)$ to the composite morphism gives us the desired morphism. Thus, to show the required statement, it suffices to show that $H^i(S,g_*\pi^*(\calL^{\otimes p^j})) = 0$ for $j \gg 0$.  By the projection formula, we have 
\[H^i(S,g_*\pi^*(\calL^{\otimes p^j})) = H^i(S,g_*g^*(\calM^{\otimes p^j})) = H^i(S,g_*\calO_Y \otimes \calM^{\otimes p^j})\]
As $\calM$ is ample, this group vanishes by Serre vanishing for $j \gg 0$, as required.
\end{proof}

Based on Proposition \ref{smithconjpos}, one might expect that semiamplitude is a positive enough property for Question \ref{smithconj} to have an affirmative answer for the case of negative twists as well. We show in Example \ref{sanotenough} that this is not true; the key feature of that example is that the semiample line bundle defines a map that is not generically finite. In fact, this feature is essentially the only obstruction: if $\calL$ is both semiample and big, then Question \ref{smithconj} has an affirmative answer even for negative twists of $\calL$.

\begin{proposition}
\label{smithconjneg}
Let $X$ be a proper variety over a field of characteristic $p$, and let $\calL$ be a semiample and big line bundle on $X$. For any $i < \dim(X)$, we can find a finite surjective morphism $\pi:Y \to X$ such that the induced map $H^i(X,\calL^{-1}) \to H^i(Y,\pi^*\calL^{-1}) $ is $0$
\end{proposition}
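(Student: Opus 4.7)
The plan is to imitate the strategy of Proposition \ref{smithconjpos}, but since the negative-twist case is strictly more subtle, to ultimately reduce via Theorem \ref{killdercoh} to a ``Kodaira vanishing up to finite covers'' statement for a genuinely ample line bundle, and then to handle that ample case using Proposition \ref{killcohdual} and Serre duality combined with a Frobenius twist.

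First, I would use the cyclic covering trick (as in Proposition \ref{smithconjpos}) to reduce to the case where $\calL$ is globally generated, noting that bigness is preserved under the cover. The morphism defined by $|\calL|$, after passage to the Stein factorization, then yields a birational $f:X \to S$ onto an irreducible projective variety $S$ with $\calL = f^*\calM$ and $\calM$ ample on $S$; here $\dim S = \dim X =: d$. Applying Theorem \ref{killdercoh} to $f$ produces a finite surjective $\pi:Y \to X$ with $g := f \circ \pi$ such that the natural map $\R f_*\calO_X \to \R g_*\calO_Y$ factors through $g_*\calO_Y$, by the argument at the end of the proof of Theorem \ref{ddsct}. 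Writing $h:S' \to S$ for the finite morphism with $h_*\calO_{S'} = g_*\calO_Y$ (the Stein factorization of $g$), tensoring this factorization with $\calM^{-1}$ and taking $H^i(S,-)$ exhibits the pullback $\pi^*:H^i(X,\calL^{-1}) \to H^i(Y,\pi^*\calL^{-1})$ as a composition
\[ H^i(X,\calL^{-1}) \longrightarrow H^i(S',(h^*\calM)^{-1}) \longrightarrow H^i(Y,\pi^*\calL^{-1}). \]
It thus suffices to produce a finite surjective $\rho:T \to S'$ killing the middle group by pullback, where $\calN := h^*\calM$ is ample on $S'$; pulling back $\rho$ along $Y \to S'$ and composing with $\pi$ then gives the required cover of $X$.

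For the resulting ample case, I would apply Proposition \ref{killcohdual} to $S'$ (equidimensional of dimension $d$, with dualising complex $\omega_{S'}^\bullet$) to obtain a finite surjective $\phi:Z \to S'$ with $\tau_{>-d}(\Tr_\phi) = 0$; equivalently, $\Tr_\phi:\phi_*\omega_Z^\bullet \to \omega_{S'}^\bullet$ factors through the canonical inclusion $\omega_{S'}[d] = \tau_{\leq -d}\omega_{S'}^\bullet \hookrightarrow \omega_{S'}^\bullet$, where $\omega_{S'} := \calH^{-d}(\omega_{S'}^\bullet)$. Set $\rho := \Frob_{S'}^j \circ \phi$ for $j$ to be chosen; since $\rho^* = \phi^* \circ (\Frob_{S'}^j)^*$, it will suffice to make $\phi^*:H^i(S',\calN^{-p^j}) \to H^i(Z,\phi^*\calN^{-p^j})$ vanish. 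By Serre duality on $S'$, the $k$-linear dual of this $\phi^*$ is identified with the map $H^{-i}(S',\calN^{p^j} \otimes \phi_*\omega_Z^\bullet) \to H^{-i}(S',\calN^{p^j} \otimes \omega_{S'}^\bullet)$ induced by $\calN^{p^j} \otimes \Tr_\phi$; the above factorization of $\Tr_\phi$ then routes this dual map through $H^{-i}(S',\calN^{p^j} \otimes \omega_{S'}[d]) = H^{d-i}(S',\calN^{p^j} \otimes \omega_{S'})$. Since $d-i > 0$ and $\calN$ is ample, Serre vanishing (applied to the coherent sheaf $\omega_{S'}$) forces the middle group to vanish for $j$ sufficiently large, and so $\phi^*$ and hence $\rho^*$ is zero, as desired.

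The hard part will be the ample case. Iterating only Frobenius kills $H^i(S',\calN^{-p^j})$ in the limit precisely when $S'$ is Cohen-Macaulay (so $\omega_{S'}^\bullet = \omega_{S'}[d]$), whereas the non-Cohen-Macaulay contributions from $\calH^q(\omega_{S'}^\bullet)$ for $q > -d$ produce persistent classes that Frobenius alone cannot extinguish. Proposition \ref{killcohdual} is precisely the tool that annihilates these non-CM contributions via an additional finite cover, and the plan is to interleave it with a Frobenius twist so that Serre vanishing handles the remaining $\omega_{S'}$-piece.
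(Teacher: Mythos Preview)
Your proposal is correct and follows essentially the same approach as the paper: reduce to the ample case via the cyclic covering trick and Theorem \ref{killdercoh} (the paper packages this reduction as Proposition \ref{killwithh}), then handle the ample case by combining Proposition \ref{killcohdual} with a Frobenius twist and Serre duality/vanishing. The only difference is cosmetic---the paper first replaces $\calL$ by a high Frobenius power so that $H^{>0}(X,\calL\otimes\omega_X)=0$ and then invokes Proposition \ref{killcohdual}, whereas you apply Proposition \ref{killcohdual} first and post-compose with Frobenius---but the underlying mechanism is identical.
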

\begin{proof}
We first describe the idea informally. Using Proposition \ref{killwithh} and arguments similar to those in the proof of Proposition \ref{smithconjpos}, we will reduce to the case that $\calL$ is actually {\em ample} on $X$. In this case, we give a direct proof using Proposition \ref{killcohdual}; the details follow.

Fix an integer $i < \dim(X)$. As $\calL$ is big, there is nothing to show for $i = 0$ and, thus, we may assume $i > 0$. As in the proof of Proposition \ref{smithconjpos}, at the expense of replacing $X$ by a finite flat cover, we may assume that $\calL$ arises as the pullback of an ample line bundle $\calM$ under a proper surjective morphism $f:X \to S$. As bigness is preserved under passage to finite flat covers, we may continue to assume that $\calL$ is big. In particular, the map $f$ is forced to be an alteration.  By Proposition \ref{killwithh}, we can find a diagram
\[ \xymatrix{ Y \ar[r]^\pi \ar[d]^a \ar[rd]^g & X \ar[d]^f \\
			S' \ar[r]^h & S } \]
with $\pi$ and $h$ finite surjective, such that we have a factorisation $H^i(X,\calL^{-1}) \stackrel{s}{\to} H^i(S',h^*\calM^{-1}) \stackrel{a^*}{\to} H^i(Y,\pi^*\calL^{-1})$ of $\pi^*$ for some map $s$. Moreover, given a finite cover $b:S'' \to S$, we can form the diagram 
\[ \xymatrix{ Y \times_{S'} S'' \ar[r]^-{\pr_1} \ar[d]^{\pr_2} & Y \ar[rd]^g \ar[d]^a \ar[r]^\pi & X \ar[d]^f \\
			  S'' \ar[r]^b & S' \ar[r]^h & S } \]
This means that at the level of cohomology, we have a commutative diagram
\[ \xymatrix{ H^i(X,\calL^{-1}) \ar[r]^-{ (\pi \circ \pr_1)^* } \ar[d]^s & H^i(Y \times_{S'} S'', (\pi \circ \pr_1)^*\calL^{-1}) \\
				H^i(S',h^*\calM^{-1}) \ar[r]^{b^*} & H^i(S'',b^*h^*\calM^{-1}) \ar[u]^{\pr_2^*} } \]
Thus, it suffices to show that $H^i(S',h^*\calM^{-1})$ can be killed by finite covers of $S'$. As $h$ is a finite morphism, the bundle $h^*\calM$ is ample. That $f$ was an alteration forces $\dim(S') = \dim(X)$ and, therefore, $0 < i < \dim(S')$. In other words, we are reduced to verifying the claim in the theorem under the additional assumption that $\calL$ is ample.

As we are free to replace $X$ by a Frobenius twist (which increases the positivity of $\calL$), we may assume that $\calL$ has the property that $H^j(X,\calL \otimes \omega_X) = 0$ for all $j > 0$, where $\omega_X$ is the dualising {\em sheaf} on $X$. Now choose a finite surjective morphism $\pi:Y \to X$ satisfying the conclusion of Proposition \ref{killcohdual}. With $d = \dim(X)$, the trace map induces the following morphism of triangles in $\D^b(\Coh(X))$:
\[ \xymatrix{ \pi_* \omega_Y[d] \ar[r] \ar[d]^-a & \pi_* \omega_Y^\bullet \ar[r] \ar[d]^-b \ar@{.>}[ld]^-s & \tau_{> -d} \pi_*\omega_Y^\bullet \ar[r] \ar[d]^-{c = 0} & \pi_* \omega_Y[d+1] \ar[d] \\
			  \omega_X[d] \ar[r] & \omega_X^\bullet \ar[r] & \tau_{> -d} \omega_X^\bullet \ar[r] & \omega_X[d+1]. } \]
Here $s$ is a map that whose existence is ensured by the equation $c = 0$ (but $s$ is not necessarily unique). Tensoring this diagram with $\calL$, using the flatness of $\calL$, and using the projection formula gives us the following morphism of triangles:
\[ \xymatrix{ \pi_* (\omega_Y \otimes \pi^*\calL) [d] \ar[r] \ar[d]^-{a_\calL} & \pi_* (\omega_Y^\bullet \otimes \pi^*\calL) \ar[r] \ar[d]^-{b_\calL} \ar@{.>}[ld]^-{s_\calL}& \tau_{> -d} \pi_*(\omega_Y^\bullet \otimes \pi^*\calL)\ar[r] \ar[d]^-{c_\calL = 0} & \pi_* (\omega_Y \otimes \pi^*\calL)[d+1] \ar[d] \\
			  \omega_X \otimes \calL [d] \ar[r] & \omega_X^\bullet \otimes \calL \ar[r] & \tau_{> -d} \omega_X^\bullet \otimes \calL \ar[r] & \omega_X \otimes \calL [d+1]. } \]
The commutativity of the above diagram and existence of $s_{\calL}$ shows that for any integer $i$, the image of the natural trace map
\[ H^{-i}(b_\calL):H^{-i}(Y,\omega_Y^\bullet \otimes \pi^*\calL) \to H^{-i}(X,\omega_X^\bullet \otimes \calL)\]
lies in the image of the natural map
\[ H^{d-i}(X,\omega_X \otimes \calL) = H^{-i}(X,\omega_X \otimes \calL[d]) \to H^{-i}(X,\omega_X^\bullet \otimes \calL). \]
Now choose $i$ such that $0 < i < d$. By assumption, the source of the preceding map is then trivial. Hence, we find that the map $H^{-i}(b_\calL)$ is also $0$. Dualising, it follows that
\[\pi^*:H^i(X,\calL^{-1}) \to H^i(Y,\pi^*\calL^{-1}) \]
is trivial, as desired.
\end{proof}

\begin{remark}
\label{hhforample}
Consider the special case of Proposition \ref{smithconjneg} when $\calL$ is ample. We treated this case directly in the second half of the proof above using Proposition \ref{killcohdual}. It is possible to replace this part of the proof by a reference to \cite[Theorem 1.2]{HHBigCM}, the main geometric theorem of that paper. We have not adopted this approach as we feel that the proof given above using Proposition \ref{killcohdual} is cleaner than the algebraic approach of \cite{HHBigCM} which involves developing a theory of graded integral closures (see \cite[\S 4]{HHBigCM}) and reducing to a local algebra theorem.
\end{remark}

\begin{remark}
\label{delillrmk}
Proposition \ref{smithconjneg} can be viewed as a weaker version of Kawamata-Viehweg vanishing in characteristic $p$ up to finite covers. The natural way to approach this question is to ask for liftability to the $2$-truncated Witt vector ring $\W_2(k)$ by smooth varieties up to finite covers, and then quote Deligne-Illusie \cite{DeligneIllusie}. In fact, by Proposition \ref{killwithh}, it would suffice to show that any variety can be dominated by a smooth one that lifts to $\W_2(k)$. Unfortunately, we do not know the answer to this.
\end{remark}

\begin{remark}
The results proven in this section were used in \cite{BSTFSingAlt} to obtain a unified description of test and multiplier ideals via alterations.
\end{remark}

\subsection{Counterexamples}
\label{subsec:countex}

This section is dedicated to providing the examples promised earlier. The most important examples here are Examples \ref{sanotenough} and \ref{nefbignotenough}; the former shows that the middle cohomology of the inverse of a semiample line bundles on a smooth projective variety cannot always be killed by finite covers, while the latter shows that the middle cohomology of a nef and big line bundle on a smooth projective variety cannot be killed by finite covers. 

We start off by constructing a degree $0$ line bundle on a {\em singular} stable curve whose cohomology cannot be killed by Frobenius; we will later use this to construct the smooth examples promised above.

\begin{example}
\label{ex:stablecurve}
Let $k$ be a perfect field of characteristic $p$ over $\F_p$. Let $E$ be an elliptic curve over $k$ with identity $z \in E(k)$, and let $\calL$ be a degree $0$ line bundle on $E$ that is not $p$-power torsion (assumed to exist). Let $C = E \sqcup_z E$ be the stable genus $2$ curve obtained by glueing $E$ to itself at $z$, and let $\calM \in \Pic(C)$ be the line bundle obtained by glueing $\calL$ over each copy of $E$ to itself using the identity isomorphism $\calL|_z \simeq \calL|_z$. Then we claim that $H^1(C,\calM)$ is a one-dimensional $k$-vector space, and $\Frob_C^e:C \to C$ induces an isomorphism $\Frob_C^{e,*}:H^1(C,\calM) \to H^1(C,\calM^{p^e})$ for each $e > 0$.  To see these claims, note that we have an exact triangle
\[ \R\Gamma(C,\calM) \to \R\Gamma(E,\calL) \oplus \R\Gamma(E,\calL) \to \R\Gamma(z,\calL|_z) \]
where $z$ abusively denotes the reduced subscheme structure on the point $z$. Since $\calL$ is chosen to be not $p$-power torsion, we have $\R\Gamma(E,\calL) \simeq \R\Gamma(E,\calL^{p^e}) = 0$ for each $e > 0$. Hence, the triangle above (and a similar one for the Frobenius pullback) degenerates to give a diagram
\[ \xymatrix{ H^0(z,\calL|_z) \ar[r]^{\simeq} \ar[d]^{\Frob_z^{e,*}} & H^1(C,\calM) \ar[d]^{\Frob_C^{e,*}} \\
			  H^0(z,\calL^{p^e}|_z) \ar[r]^{\simeq} & H^1(C,\calM^{p^e}). } \]
One can check that the left vertical map is an isomorphism (it is given on sections by $s \mapsto s^{\otimes p^e}$), and thus so is the right vertical one. We remark that this construction can be adapted to work for arbitrary genera by glueing in more copies of $E$.
\end{example}

Next, we use Example \ref{ex:stablecurve} to construct a degree zero line bundle $\calM$ on a smooth curve $C$ whose top cohomology cannot be killed by finite covers of $C$. This example shows that the semiamplitude hypothesis in Proposition \ref{smithconjpos} cannot be weakened to a nefness hypothesis. 

\begin{example}
\label{nefnotenoughfortop}
Let $C$ be a curve of genus $g(C) \geq 2$ over an uncountable algebraically closed field $k$ of characteristic $p$, and let $\calM$ be a degree $0$ line bundle on $C$ with the following properties:
\begin{enumerate}
\item For each integer $e > 0$, the Frobenius map $\Frob_C^e:C \to C$ induces an injective map $\Frob_C^{e,*}:H^1(C,\calM) \to H^1(C,\calM^{p^e})$.
\item For each integer $e > 0$, the line bundle $\calM^{-p^e}$ does not occur as a subquotient of $h_* \calO_{C'}$ for any finite \'etale cover $h:C' \to C$.
\end{enumerate}
We will show that such pairs $(C,\calM)$ exist over a sufficiently large field, and that for any finite surjective map $\pi:C' \to C$, the pullback $H^1(C,\calM) \to H^1(C',\pi^* \calM)$ is injective. Our proof of existence is based on the deformation theory of stable curves (see \cite{DeligneMumford}) coupled with Example \ref{ex:stablecurve}, while the proof of injectivity uses the Harder-Narasimhan filtration for vector bundles on a curve (see \cite[\S 6.4.A]{LazPos2}).

Existence: Consider the moduli stack\footnote{One can avoid all mention of stacks in this proof by simply taking $\calX$ to be a versal family of stable curves equipped with line bundles of degree $0$.} $\calX := \overline{\calM_g}(B\G_m)$ parametrising pairs $(C,\calM)$ where  $C$ is a  {\em stable} genus $g$ curve, and $\calM$ is a line bundle on $C$. This stack is smooth by deformation theory: stable curves have unobstructed deformations by \cite[Corollary 5.32]{IllusieFormalGeom}, and line bundles on any proper curve are unobstructed as $H^2(C,\calO_C) = 0$. Let $\pi:\calC \to \calX$ denote the universal curve, and consider the usual diagram:
\[ \xymatrix{ \calC \ar@/_/[dddr]_{\pi} \ar@/^/[ddrrr]^{\Frob_\calC} \ar[ddr]^-{\Frob_\pi} & & \\ 
			 & & \\
			  & \calC^{(1)} \ar[d]^-{\pi^{(1)}} \ar[rr]^-{\Frob_\calX} & & \calC \ar[d]^-{\pi} \\
			  & \calX \ar[rr]^-{\Frob_\calX} & & \calX.} \]
If $\calM^\univ \in \Pic(\calC)$ denotes the universal line bundle, then the relative Frobenius map  $\Frob_\pi$ induces a morphism $a:\R^1 \pi^{(1)}_* \Frob_\calX^* \calM^\univ \to \R^1 \pi_* (\calM^\univ)^{\otimes p}$. Now consider the open substack $U \subset \calX$ where the formation of $\R^i \pi_* \calM^\univ$ commutes with base change. The fibre of $a$ over a point $(C,\calM) \in U$ is given by the usual Frobenius map $H^i(C,\calM) \to H^i(C,\calM^p)$. Since the curve $(C,\calM)$ constructed in Example \ref{ex:stablecurve} lies in this open $U$, we see that the map $a$ has a trivial kernel in a Zariski open neighbourhood of this particular point. Applying the same analysis to higher Frobenius twists and then intersecting, we find that for a very general $(C,\calM)$ in $\calX$ with $C$ smooth, the first property above is satisfied.  For the second property above, note that for each finite \'etale cover $h:C' \to C$, the bundle $h_* \calO_{C'}$ is a semistable degree $0$ vector bundle: this can be checked after finite \'etale base change on $C$, but then it is clear as $C'$ splits completely over some finite \'etale cover. As the category of semistable vector bundles of degree $0$ is an artinian and noetherian $k$-linear abelian category, it follows that only finitely many degree $0$ line bundles occur as subquotients of $h_* \calO_{C'}$ for each finite \'etale $h:C' \to C$. As there are only countably many possibilities for such $h$ (by the finite generation of $\pi_{1,\et}(C)$, proven by lifting to characteristic $0$, for example), there are only countably many possibilities for degree $0$ line bundles that occur as subquotients of $h_* \calO_{C'}$ for any finite \'etale $h:C' \to C$. Thus, for any smooth projective curve $C$, a very general degree $0$ line bundle $\calM$ will satisfy the second property, and hence both properties, above.

We now show that for a pair $(C,\calM)$ satisfying the two properties above, the group $H^1(C,\calM)$ cannot be killed by finite covers. If not, by normalising if necessary, we have a finite flat map $f:C' \to C$ such that $f^*:H^1(C,\calM) \to H^1(C',f^*\calM)$ has a non-zero kernel.  Furthermore, by replacing $C'$ with a cover if necessary, we may even assume that the extension of function fields induced by $f$ is normal. By taking invariants at the level of function fields, we can factor $f$ as $C' \stackrel{a}{\to} C'' \stackrel{b}{\to} C$ with $a$ generically \'etale and $b$ purely inseparable. Replacing $C''$ by a dominating purely inseparable map and $C'$ by the normalised base change, we may even assume that $C'' = C$ and $b = \Frob^e$ is a power of Frobenius. Our assumptions on $\calM$ imply that $H^1(C,\calM) \to H^1(C'',b^*\calM)$ is injective, and hence $a^*:H^1(C'',b^*\calM) \to H^1(C',f^*\calM)$ must have a kernel. Now consider the exact sequence
\[ 0 \to \calO_{C''} \to a_* \calO_C \to \calQ \to 0\]
where $\calQ$ is defined to be the quotient. Tensoring with $b^* \calM$ and taking cohomology, we see that if $H^1(C'',b^* \calM) \to H^1(C',f^*\calM)$ has a kernel, then $H^0(b^* \calM \otimes \calQ) \neq 0$, or equivalently that $\calM^{-p^e} = b^* \calM^{-1}$ occurs as a subsheaf of $\calQ$. We will show that this contradicts the second property above.

Since $a$ is generically \'etale, a theorem of Lazarsfeld \cite[Appendix, Proposition A]{PSL} implies that $\calQ^\vee$ is a nef vector bundle; the result in {\em loc. cit.} is stated in characteristic $0$, but this assumption is only used to ensure generic separability. The bundle $a_* \calO_C$ therefore has non-positive degree (as it is an extension of the antinef vector bundle $\calQ$ by $\calO_{C''}$). This implies that the maximal slope occuring in the Harder-Narasimhan filtration for $a_*\calO_{C'}$ is $0$. Since $\calO_C$ is a subbundle of $f_*\calO_C$ of maximal degreee, we have an induced exact sequence of semistable degree $0$ vector bundles 
\[ 0 \to \calO_{C''} \to \Fil^0(a_*\calO_{C'}) \to \Fil^0(\calQ) \to 0 \]
where $\Fil^0(\calE)$ denotes the piece of the Harder-Narasimhan filtration with slope $\geq 0$. We showed above that $\calM^{-p^e}$ occurs as a subsheaf of $\calQ$. Since $\calM^{-p^e}$ has slope $0$, it also occurs as a subsheaf of $\Fil^0(\calQ)$. Hence, $\calM^{-p^e}$ occurs as a subquotient of $\Fil^0(a_*\calO_{C'})$. On the other hand, the algebra structure on $a_* \calO_{C'}$ descends to give an algebra structure on $\Fil^0(a_*\calO_{C'})$, and thus the latter corresponds to the structure sheaf of some finite cover $h:D \to C''$ factoring the map $a$. Since $a$ is generically \'etale, the same is true for $h$. By construction, we also have $\deg(h_*\calO_D) = 0$. It now follows from Reimann-Hurwitz and Riemann-Roch that $h$ is finite \'etale. Thus, the line bundle $\calM^{-p^e}$ occurs as a subquotient of $h_*\calO_D$ for $h:D \to C$ finite \'etale, which contradicts the assumptions on the pair $(C,\calM)$, finishing the proof.

\comment{
footnote{An ordinary curve $C$ over a characteristic $p$ field $k$ is a smooth projective curve satisfying $\Pic^0(C)[p](\overline{k}) \simeq (\Z/p)^g$ where $g$ is genus of $C$. The existence of such curves can be shown in various ways. For instance, one knows that ordinary elliptic curves exist because the non-ordinary elliptic curves cut out a smooth divisor, given by the vanishing of the Hasse invariant, in the $1$-dimensional moduli stack of elliptic curves. By glueing $g$ ordinary elliptic curves in a chain and smoothening the resulting stable curve, one obtains an ordinary genus $g$ curve.}
Since the abelian category $\Vect(C)^\ss_0$ is artinian and noetherian, the bundle $g_*\calO_{C''}$ contains only finitely many stable degree $0$ vector bundles as subquotients. In particular, only finitely many non-isomorphic degree $0$ line bundles occur in $g_*\calO_{C''}$. Moreover, as the original morphism $f$ was generically normal (in the sense of field theory), so is $g$. Thus, to show our desired result, it suffices to show the countability of the collection of all degree $0$ line bundles occuring in $g_*\calO_{C''}$ when $g$ runs through generically normal finite flat morphisms $g:C'' \to C$ with $\deg(g_*\calO_{C''}) = 0$. 

If $h:C'' \to \tilde{C}$ is a finite \'etale Galois cover, and $\calM$ is a degree $0$ line bundle $\tilde{C}$ occuring in $h_*\calO_{C''}$, then $\calM$ is torsion: this follows because $h^*(\calM)$ is a deree $0$ line bundle occuring in $h^*(h_*(\calO_{C''})) = \calO_{C''}^{\deg(h)}$ and, therefore, $h^*(\calM) \simeq \calO_{C''}$ and, therefore, $\calM \in \ker(\Pic^0(\tilde{C}) \to \Pic^0(C''))$

If $F:\tilde{C} \to C$ is a purely inseparable morphism, then there exists an open subset $U \hookrightarrow \Pic^0(C)$ such that for any $\calM \in U(k)$, the pullback $H^1(C,\calM) \to H^1(\tilde{C},F^*(\calM))$ is injective.

As $g$ is generically normal, it can be factored as $C'' \stackrel{h}{\to} \tilde{C} \stackrel{F}{\to} C$ with $h$ generically \'etale, and $F:\tilde{C} \to C$ is purely inseparable. Any purely inseparable map is dominated by some power of the Frobenius map, and so there are only countably many possibilities for $F$. Moreover, $\deg(g_*(\calO_{C''})) = 0$ implies that $\deg(h_*(\calO_{C''})) = 0$ which, by Riemann-Roch, implies that $\chi(C'') = \deg(h) \chi(\tilde{C})$. As $h$ is generically \'etale, Riemann-Hurwitz applies to say that $h$ is finite \'etale. On the other hand, as $F$ is purely inseparable, $F$ induces an equivalence of the lisse \'etale sites of $\tilde{C}$ and $C$. In particular, their \'etale fundamental groups coincide which, by the finite generation of the \'etale fundamental group (proven by lifting the curve and the cover to characteristic $0$, for example), implies that there are only countably many possibilities for $h$. Thus, there are only countably many possiblities for the pair $(F,h)$ and, therefore, for $g$, proving the claim.
}

\end{example}

\begin{remark}
Example \ref{nefnotenoughfortop} requires one to work with very general line bundles, and hence does not answer the following question: does the conclusion of Proposition \ref{smithconjpos} hold for nef line bundles provided the base field is $\overline{\F_p}$?  We do not know the answer to this. Let us simply point out the reason the strategy used in Example \ref{nefnotenoughfortop} cannot work: any degree $0$ line bundle $\calM$ on a curve $C$ over $\overline{\F_p}$ is torsion since $\Pic^0(C)$ is torsion, and hence $H^1(C,\calM)$ can be killed by Proposition \ref{smithconjpos}. A natural class of examples to consider would be the surfaces and threefolds of \cite{TotaroNefSaEx}.
\end{remark}

Using Example \ref{nefnotenoughfortop}, we can easily produce an example of a nef line bundle $\calL$ on a surface $X$ whose middle cohomology cannot be killed by passage to finite covers. In fact, the bundle constructed has degree $0$ and can thus be viewed as the inverse of a nef bundle as well; this dual perspective negatively answers Question \ref{smithconj} for the case of positive or negative twists when ``weakly positive'' is taken to mean nef.

\begin{example}
\label{nefnotenough}
Let $(C,\calM)$ be as in Example \ref{nefnotenoughfortop}. Then $\calL = \calM \boxtimes \calO_C = \pr_1^*\calM$ is a nef line bundle on $X = C \times C$ with $\pr_1^*:H^1(C,\calM) \stackrel{\simeq}{\to} H^1(X,\pr_1^*\calM) = H^1(X,\calL)$. We claim that there does not exist a finite surjective morphism $\pi:Y \to X$ inducing the $0$ map $\pi^*:H^1(X,\calL) \to H^1(Y,\pi^*\calL)$. If $\pi$ was such a map, then choosing a multisection of $\pr_1 \circ \pi$ and normalising it gives a finite flat morphism $f:C' \to C$ inducing the $0$ map on $H^1(C,\calM)$. However, as shown in Example \ref{nefnotenoughfortop}, this cannot happen. 
\end{example} 

Our next example is that of a semiample line bundle $\calL$ on a surface $X$ such that the middle cohomology of $\calL^{-1}$ cannot be killed by finite covers. Thus, it negatively answers Question \ref{smithconj} for the case of negative twists when ``weakly positive'' is taken to mean even semiample, not just nef.

\begin{example}
\label{sanotenough}
Consider the bundle $\calL = \calO(2) \boxtimes \calO = \pr_1^*\calO(2)$ on $X = \P^1 \times \P^1$ over a field $k$. This is a semiample bundle with $H^1(X,\calL^{-1}) = H^1(\P^1,\calO(-2)) \otimes H^0(\P^1,\calO_{\P^1}) = k$. We claim that there is no finite surjective morphism $g:Y \to X$ inducing the $0$ map $H^1(X,\calL^{-1}) \to H^1(Y,\pi^*\calL^{-1})$. If there were such a map $g$,  then $\pr_1 \circ g:Y \to \P^1$ is an alteration inducing the $0$ map on $H^1(\P^1,\calO(-2))$. Choosing a multisection of $\pr_1 \circ g$ and normalising it gives a finite flat morphism $f:C \to \P^1$ inducing the $0$ map on $H^1(\P^1,\calO(-2))$. However, this cannot happen: the morphism of exact sequences 
\[ \xymatrix{ 0 \ar[r] & \calO(-2) \ar[r] \ar[d] & \calO \ar[r] \ar[d] & \calO_0 \oplus \calO_\infty \ar[r] \ar[d] &  0 \\
			0 \ar[r] & f_*f^*\calO(-2) \ar[r] & f_*\calO \ar[r] & f_*\calO_{f^{-1}(0)} \oplus f_*\calO_{f^{-1}(\infty)} \ar[r] & 0 }\]
gives us a morphism of exact sequences
\[ \xymatrix{  k = H^0(\P^1,\calO_{\P^1}) \ar@{^{(}->}[r] \ar[d]^{\simeq} & H^0(\P^1,\calO_0 \oplus \calO_\infty) \ar@{->>}[r]^a \ar[d]^b & H^1(\P^1,\calO(-2)) = k\ar[d]^d\\
			 k = H^0(C,\calO_C) \ar@{^{(}->}[r]^-c & H^0(C,\calO_{f^{-1}(0)} \oplus \calO_{f^{-1}(\infty)}) \ar[r] & H^1(C,f^*\calO(-2)) }\]
The surjectivity of $a$ gives that $\dim(H^0(\P^1,\calO_0 \oplus \calO_\infty)) = 2$, while the injectivity of $b$ ensures that $\dim(\im(b)) = 2$. As $\dim(\im(c)) = 1$, it follows that $\im(b)$ strictly contains $\im(c)$, and therefore, $\dim(\im(d)) = 1$ which is what we wanted.
\end{example}

Finally, we conclude by giving an example showing that the conclusion of Proposition \ref{smithconjpos} fails for nef and big line bundles.

\begin{example}
\label{nefbignotenough}
Let $(C,\calM)$ be as in Example \ref{nefnotenoughfortop}, and let $\calL$ be an ample line bundle on $C$. Let $\calE = \calL \oplus \calM$, let $X = \P(\calE)$, and let $\pi:X \to C$ be the natural projection. With $\calO_\pi(1)$ denoting the Serre line bundle on $X$, we will show the following:

\begin{itemize}
\item The line bundle $\calO_\pi(1)$ is nef.
\item The line bundle $\calO_\pi(1)$ is big.
\item The group $H^1(X,\calO_\pi(1))$ is non-zero, and cannot be annihilated by finite covers of $X$.
\end{itemize}

We will first verify that $\calO_\pi(1)$ is nef. Using the Barton-Kleiman criterion (see \cite[Proposition 6.1.18]{LazPos2}), it suffices to show that for any quotient $\calE \twoheadrightarrow \calN$ with $\calN$ invertible, we must have $\deg(\calN) \geq 0$. This claim follows from the formula
\[\Hom(\calE,\calN) = \Hom(\calL,\calN) \oplus \Hom(\calM,\calN) \]
and the fact that neither line bundle $\calL$ nor $\calM$ admits a map to a line bundle with negative degree.  

We now verify bigness of $\calO_\pi(1)$. By definition, this amounts to showing that $h^0(X,\calO_\pi(n))$ grows quadratically in $n$ (we follow the usual convention that $h^0(X,\calF) = \dim(H^0(X,\calF))$ for a coherent sheaf $\calF$ on $X$).  Standard calculations about projective space bundles show that 
\[\pi_*\calO_\pi(n) \simeq \R \pi_*\calO_\pi(n) \simeq \Sym^n(\calE) \]
for $n > 0$. The Leray spectral sequence for $\pi$ then gives us that 
\[ H^0(X,\calO_\pi(n)) = H^0(C,\Sym^n(E)) = H^0(C,\oplus_{i + j = n} \calL^i \otimes \calM^j). \]
Since $\calL$ is ample and $\calM$ has degree $0$, the Riemann-Roch estimate tells us that $H^0(C,\calL^i \otimes \calM^j)$ grows like $i$ (for big enough $i$). Hence, we find
\[ h^0(X,\calO_\pi(n)) = \sum_{i + j = n} h^0(C,\calL^i \otimes \calM^j) \sim 1 + 2 + \cdots + n = \frac{n(n-1)}{2}, \]
thereby verifying the bigness of $\calO(1)$.

It remains to check that cohomology-annihilation claim. The Leray spectral sequence shows that 
\[H^1(X,\calO_\pi(1)) = H^1(C,\calE) = H^1(C,\calL) \oplus H^1(C,\calM).\] 
In particular, this group is non-zero since the second factor is so. Moreover, the natural projection $\calE \twoheadrightarrow \calM$ defines a section $s:C \to X$ of $\pi$ such that $s^*\calO_\pi(1) \simeq \calM$. Hence, we find that $s^*$ induces a map $H^1(X,\calO_\pi(1)) \to H^1(C,\calM)$ which is simply the projection on the second factor under the preceding isomorphism. In particular, if there was a finite cover $\pi:Y \to X$ such that $\pi^*(H^1(X,\calO_\pi(1)) = 0$, then restricting $Y$ to $s:C \to X$, we would obtain a finite cover of $C$ annihilating $H^1(C,\calM)$, contradicting what we proved in Example \ref{nefnotenoughfortop}.
\end{example}

\begin{question}
The examples given above do not answer the following question: given a smooth projective variety $X$ and a nef and big line bundle $\calL$, can the group $H^i(X,\calL^{-1})$ be killed by finite covers of $X$ for $0 < i < \dim(X)$? If $\calL$ is assumed to be semiample, then Proposition \ref{smithconjneg} provides a positive answer. If the bigness condition is dropped, then Example \ref{nefnotenough} provides a negative answer. A positive answer to this question would give an ``up to finite covers'' analogue of Kawamata-Viehweg vanishing, and be quite useful for applications.
\end{question}

\section{Application: Some more global examples of D-splinters}
\label{sec:poscharglobalex}

The goal of this section is to show that the complete flag variety for $\GL_n$ is a D-splinter. Our proof relies on the results of \S \ref{sec:poscharcommentary}, and is the first non-toric example of a D-splinter in this paper (see Example \ref{toricex} for the toric case). We first record an elementary criterion to test when a finite morphism is ``split.''

\begin{lemma}
\label{critsplinter}
Let $X$ be a Gorenstein projective scheme of equidimension $n$ over a field $k$, and let $\pi:Y \to X$ be a proper morphism. Then the existence of a section of $\calO_X \to \R\pi_*\calO_Y$ is equivalent to the injectivity of $H^n(X,\omega_X) \to H^n(Y,\pi^*\omega_X)$
\end{lemma}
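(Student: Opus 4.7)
The plan is to use Serre duality for the Gorenstein projective scheme $X$ to translate the existence of a splitting into a cohomological statement. The forward implication is straightforward: given a section $s \colon \R\pi_*\calO_Y \to \calO_X$ in $D(\Coh(X))$, tensoring with the invertible sheaf $\omega_X$ and invoking the projection formula $\R\pi_*\calO_Y \otimes \omega_X \cong \R\pi_*\pi^*\omega_X$ yields a section of $\omega_X \to \R\pi_*\pi^*\omega_X$. Applying $H^n(X, -)$ then splits $H^n(X, \omega_X) \to H^n(Y, \pi^*\omega_X)$, so in particular this map is injective.

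For the converse, I would reconstruct a section abstractly via duality. Since $X$ is Gorenstein projective of pure dimension $n$, its dualising complex is $\omega_X[n]$ with $\omega_X$ a line bundle, and Serre duality provides a functorial isomorphism $\Ext^j_X(K, \omega_X) \cong H^{n-j}(X,K)^\vee$ for $K \in D^b(\Coh(X))$. Combining invertibility of $\omega_X$ with the projection formula, I obtain
\[
\Hom_{D(X)}(\R\pi_*\calO_Y, \calO_X) \;\cong\; \Hom_{D(X)}(\R\pi_*\pi^*\omega_X, \omega_X) \;\cong\; H^n(Y, \pi^*\omega_X)^\vee,
\]
and similarly $\Hom_{D(X)}(\calO_X, \calO_X) = H^0(X,\calO_X) \cong H^n(X, \omega_X)^\vee$, under which the identity $\id_{\calO_X}$ corresponds to the Serre trace $\tau \colon H^n(X,\omega_X) \to k$. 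The existence of a splitting is equivalent to $\id_{\calO_X}$ lying in the image of the precomposition map $\Hom(\R\pi_*\calO_Y, \calO_X) \to \Hom(\calO_X, \calO_X)$.

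The main technical point, which I see as the principal obstacle, is to identify this precomposition map under the duality isomorphisms with the linear dual of the pullback $H^n(X, \omega_X) \to H^n(Y, \pi^*\omega_X)$. This amounts to unraveling the functoriality of Serre duality in the source: an element $\phi \in \Hom(K, \omega_X)$ corresponds to the functional $c \mapsto \tau(H^n(\phi)(c))$, so precomposing a map $\R\pi_*\calO_Y \to \calO_X$ with the unit $\calO_X \to \R\pi_*\calO_Y$ becomes, after dualising, postcomposition with the pullback on top cohomology. Granting this compatibility, $\id_{\calO_X}$ lies in the image exactly when $\tau$ annihilates $\ker(H^n(X,\omega_X) \to H^n(Y,\pi^*\omega_X))$.

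To conclude, I would decompose $X$ into connected components $X_i$: the question of splitting and the claim about injectivity both decompose componentwise, and on each component $H^n(X_i, \omega_{X_i}) \cong k$ with $\tau|_{X_i}$ a nonzero functional, so $\tau|_{X_i}$ annihilates a subspace iff the subspace is zero. Thus $\tau$ vanishes on the kernel iff the kernel itself is trivial, giving the desired equivalence.
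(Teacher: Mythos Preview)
Your proposal is correct and follows essentially the same route as the paper: both use the projection formula and Serre duality to identify the precomposition map $\Hom(\R\pi_*\calO_Y,\calO_X) \to \Hom(\calO_X,\calO_X)$ with the linear dual of the pullback $H^n(X,\omega_X) \to H^n(Y,\pi^*\omega_X)$. The paper is marginally more direct, handling both implications at once via the linear-algebra fact that a map of finite-dimensional $k$-vector spaces is injective iff its dual is surjective; your detour through the trace $\tau$ and the componentwise claim $H^n(X_i,\omega_{X_i}) \cong k$ is unnecessary (and that isomorphism can fail when $H^0(X_i,\calO_{X_i}) \neq k$), but since you already proved the forward implication directly, only the trivial half of your final equivalence is actually used, so the argument goes through.
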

\begin{proof}
By the projection formula and the flatness of $\omega_X$, we have $H^n(Y,\pi^*\omega_X) = H^n(X,\omega_X \otimes \R\pi_*\calO_Y)$. Thus, the injectivity of $H^n(X,\omega_X) \to H^n(Y,\pi^*\omega_X)$ is equivalent to the injectivity of 
\[H^n(X,\omega_X) \to H^n(X,\omega_X \otimes \R \pi_*\calO_Y).\]
This map is the map on $H^n$ induced by the natural map $\omega_X \to \omega_X \otimes \R \pi_*\calO_Y$. Serre duality tells us that this injectivity is equivalent to the surjectivity of
\[ \Hom(\R\pi_*\calO_Y \otimes \omega_X,\omega_X) \to \Hom(\omega_X,\omega_X). \]
Since $\omega_X$ is invertible, the preceding surjectivity is equivalent to the surjectivity of 
\[ (\pi^*)^\vee = \ev_1:\Hom(\R\pi_*\calO_Y,\calO_X) \to \Hom(\calO_X,\calO_X) \]
induced by the natural map $\calO_X \to \R \pi_*\calO_Y$. On the other hand, the surjectivity of this map is also clearly equivalent to $\calO_X \to \R\pi_*\calO_Y$ admitting a section; the claim follows.
\end{proof}

We now record a criterion that allows us to propagate the splinter condition from a subvariety to the entire variety. The criterion is formulated in terms the existence of nice resolutions of dualising sheaves. 

\begin{proposition}
\label{splintersubtot}
Let $X$ be a Gorenstein projective variety of equidimension $n$ over a field $k$ of positive characteristic $p$. Let $i:Z \hookrightarrow X$ be a closed equidimensional subvariety that is itself Gorenstein, and let $c$ be the codimension $\dim(X) - \dim(Z)$. Assume that there exists a resolution of $\omega_Z$ of the following form:
\[ [ \omega_X = \calE_c \to \calE_{c-1} \to \cdots \to \calE_0 ] \simeq \omega_Z \]
where, for each $0 \leq i < c$, the sheaf $\calE_i$ is an iterated extension of inverses of semiample and big line bundles. If $Z$ is a splinter, so is $X$.
\end{proposition}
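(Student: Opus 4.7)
The plan is to reduce the splinter property of $X$ to that of $Z$ by exploiting the given resolution together with the machinery of Section~\ref{sec:poscharappksmith}.

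First, since $X$ lives in positive characteristic, Theorem~\ref{ddsct} makes splinters and $D$-splinters coincide, and Lemma~\ref{critsplinter} further reduces the $D$-splinter property of $X$ to showing that for every proper surjective $\pi\colon Y \to X$, the pullback $\pi^*\colon H^n(X, \omega_X) \to H^n(Y, \pi^*\omega_X)$ is injective. So I fix a nonzero class $\alpha \in H^n(X, \omega_X)$, and my task is to show $\pi^*\alpha \neq 0$.

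Next, I would exploit the hypercohomology spectral sequence attached to the resolution $F^\bullet := [\omega_X = \calE_c \to \calE_{c-1} \to \cdots \to \calE_0] \simeq i_*\omega_Z$ (with $\calE_j$ placed in cohomological degree $-j$):
\[
E_1^{p,q} = H^q(X, \calE_{-p}) \Longrightarrow H^{p+q}(Z, \omega_Z).
\]
The abutment is one-dimensional, concentrated in total degree $n - c = \dim(Z)$. On the antidiagonal $p+q = n-c$ the contributing $E_1$-terms are $H^{n-c+j}(X, \calE_j)$ for $0 \le j \le c$, with the extreme term $j = c$ being exactly $H^n(X, \omega_X)$. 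All intermediate terms ($j < c$) sit in cohomological degrees strictly less than $n = \dim(X)$ against iterated extensions of inverses of semiample and big line bundles; by Proposition~\ref{smithconjneg} and induction along the defining short exact sequences of the iterated extensions, each such term can be killed by a finite surjective cover of $X$. Via Proposition~\ref{extendcover} I would assemble all these covers into a single finite surjective $h\colon X' \to X$ on which every intermediate antidiagonal term vanishes; pulling $\pi$ back to $Y' := Y \times_X X'$ and iterating the same construction, I would arrange the corresponding vanishing on $Y'$. The spectral sequence on $X'$ then degenerates along the antidiagonal, yielding an identification $H^n(X', h^*\omega_X) \cong H^{n-c}(Z', \omega_{Z'})$ where $Z' := Z \times_X X'$, and a parallel identification on $Y'$ slotting into a commutative pullback square.

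Finally, since $Z$ is a splinter and hence a $D$-splinter (Theorem~\ref{ddsct}), applying Lemma~\ref{critsplinter} to $Z$ and the proper surjective restriction $\pi|_{\pi^{-1}(Z)}\colon \pi^{-1}(Z) \to Z$ gives injectivity of $H^{n-c}(Z, \omega_Z)$ into $H^{n-c}$ of $\pi^{-1}(Z)$. Combining this injectivity with the identifications of the previous paragraph forces $\pi^*\alpha \neq 0$, completing the argument. The main obstacle will be making the identifications in that paragraph precise: the derived pullback $Lh^*(i_*\omega_Z)$ may carry nontrivial Tor sheaves in positive degrees if $h$ is not flat along $Z$, and the pulled-back sheaves $\pi^*\calE_j$ on $Y'$ need not remain extensions of inverses of big line bundles since bigness is not preserved under a non-generically-finite proper map. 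Both difficulties should be absorbable into additional rounds of the same finite-cover construction used to kill the main intermediate terms, but they require careful bookkeeping: one either chooses $h$ flat along $Z$ from the outset (using Proposition~\ref{extendcover} to refine appropriately), or folds the Tor corrections into the iterative cover construction by treating them as additional coherent sheaves whose relevant cohomology must be killed.
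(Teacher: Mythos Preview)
Your overall strategy is right --- relate $H^n(-,\omega_X)$ to $H^{n-c}(-,\omega_Z)$ via the resolution, kill intermediate spectral-sequence terms using Proposition~\ref{smithconjneg}, and invoke Lemma~\ref{critsplinter} for $Z$ --- but the execution has a real gap, and you have essentially identified it yourself. You attempt to kill the intermediate antidiagonal terms on both $X'$ and $Y'$. On $X'$ this is fine (finite covers preserve semiampleness and bigness), but on $Y'$ it is not: the map $\pi$ is merely proper surjective, so $\pi^*\calE_j$ need not be built from inverses of big line bundles, and Proposition~\ref{smithconjneg} simply does not apply. Your proposed fix (``absorb into additional rounds of the same construction'') does not work, because no amount of further finite covering of $Y'$ will make a non-big line bundle big. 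There is a second, smaller problem: the identification $H^n(X',h^*\omega_X)\cong H^{n-c}(Z',\omega_{Z'})$ is not correct as written. What degeneration of the spectral sequence gives you is $H^n(X',h^*\omega_X)\cong H^{n-c}(X',\L h^*\omega_Z)$, and $\L h^*\omega_Z$ is not $\omega_{Z'}$.

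The paper's proof sidesteps both issues with one move. First, it is enough to test against \emph{alterations} $f\colon Y\to X$, so bigness is preserved under $f^*$. Second --- and this is the key idea you are missing --- the paper applies Proposition~\ref{smithconjneg} \emph{on $Y$}, not on $X$: one finds a finite surjective $g\colon Y'\to Y$ annihilating the pullback maps $H^j(Y,f^*\calE_i)\to H^j(Y',g^*f^*\calE_i)$ for $j<n$ and $i<c$. One never needs to analyse the spectral sequence on $Y'$ at all. The only fact used about $Y'$ is that $\L(f\circ g)^*\alpha\neq 0$ in $H^{n-c}(Y',\L(f\circ g)^*\omega_Z)$, and this follows directly from the $D$-splinter property of $Z$ applied to the alteration $f\circ g$ (via Lemma~\ref{critsplinter} and the map $\L(f\circ g)^*\omega_Z\to (f\circ g)^*\omega_Z$). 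Combining these, the image of $\L f^*\alpha$ under the edge map $\delta_Y\colon H^{n-c}(Y,\L f^*\omega_Z)\to H^n(Y,f^*\omega_X)$ must be nonzero, and a comparison of the spectral sequences on $X$ and $Y$ then yields the injectivity of $H^n(X,\omega_X)\to H^n(Y,f^*\omega_X)$.
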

\begin{proof}
Let $\alpha \in H^{n-c}(X,\omega_Z) \simeq H^{n-c}(Z,\omega_Z)$ be a generator (under Serre duality). Let $f:Y \to X$ be an alteration. By assumption on $Z$, we know that $f^*(\alpha)$ is not zero in $H^{n-c}(Y,f^*\omega_Z)$. Given the natural map $\L f^*\omega_Z \to f^*\omega_Z$, we find that the pullback $\L f^*\alpha \in H^{n-c}(Y,\L f^*\omega_Z)$ is also non-zero. Note that this holds for {\em any} alteration $f:Y \to X$; this observation will be applied later in the proof to a different map.

Pulling back the given resolution for $\omega_Z$ to $Y$, we obtain a resolution
\[ [ f^*\omega_X = f^*\calE_c \to f^*\calE_{c-1} \to \cdots \to f^*\calE_0 ] \simeq \L f^*\omega_Z \]
The hypercohomology spectral sequence associated to the stupid filtration of this complex takes the form:
\[ E^{1,q}_p(Y \to X): H^q(Y,f^*\calE_p) \Rightarrow H^{q-p}(Y,\L f^*\omega_Z) \]
 We will trace the behaviour of the class $\L f^*\alpha \in H^{n-c}(Y,\L f^*\omega_Z)$ through the spectral sequence. The terms contributing to this group in the spectral sequence are $H^q(Y,f^*\calE_p)$ with $q - p = n - c$. Since $\dim(Y) = n$, the contributing terms $H^q(Y,f^*\calE_p)$ have $q < n$ whenever $p < c$. We will first show by applying Proposition \ref{smithconjneg} that these numerics imply that $\L f^*\alpha$ has to be non-zero in $H^n(Y,f^*\calE_0)$, and then we will explain why this is enough to prove the claim.

Since the bundles $\calE_i$ are assumed to be iterated extensions of inverses of semiample and big line bundles for $i < c$, the same is true for the pullbacks $f^*\calE_i$.  Proposition \ref{smithconjneg} then allows us to produce a finite surjective morphism $g:Y' \to Y$ such that $H^j(Y,f^*\calE_i) \to H^j(Y',g^*f^*\calE_i)$ is $0$ for $j < n$ (and $i < c$ still). Since we know that $\L (f \circ g)^* \alpha$ is non-zero by the earlier argument, it follows that the image of $\L f^*\alpha$ has to be non-zero in $H^{n}(Y,f^*\calE_0) = H^n(Y,f^*\omega_X)$ under the natural coboundary map $H^{n-c}(Y,\L f^*\omega_Z) \to H^n(Y,f^*\omega_X)$. 

Now note that we also have a analogous spectral sequence 
\[ E^{1,q}_p(X \to X): H^q(X,\calE_p) \Rightarrow H^{q-p}(X,\omega_Z) \]
and a morphism of spectral sequences $E^{1,q}_p(X \to X) \to E^{1,q}_p(Y \to X)$ by pulling back classes. This gives rise to the commutative square
\[ \xymatrix{ k \simeq H^{n-c}(X,\omega_Z) \ar[r]^{\delta_X} \ar[d]^{a} & H^n(X,\omega_X) \simeq k \ar[d]^b \\ H^{n-c}(Y,\L f^*\omega_Z) \ar[r]^{\delta_Y} & H^n(Y,f^*\omega_X)} \]
We have just verified that $\delta_Y \circ a$ is non-zero and, hence, injective. A diagram chase then implies that $\delta_X$ is injective and, hence, bijective. Another diagram chase then implies that $b$ is injective. By Proposition \ref{critsplinter}, we are done.
\end{proof}

\begin{remark}
Consider the special case of Proposition \ref{splintersubtot} where all the line bundles occuring in the $\calE_i$ are antiample. Since $X$ is Gorenstein, one may be tempted to say that the given proof of Proposition \ref{splintersubtot} goes through without using Proposition \ref{smithconjneg} as we can simply use Frobenius to kill cohomology after dualising. However, this is false: we applied Proposition \ref{smithconjneg} to finite covers $Y \to X$ rather than $X$ itself, and there is no reason we can suppose that $Y$ is Gorenstein. If we alter $Y$ to a Gorenstein (or even regular) scheme, then we lose ampleness, and are once again in a position where we need to use Proposition \ref{smithconjneg}.
\end{remark}

\begin{remark}
The assumptions in Proposition \ref{splintersubtot} are extremely strong. Consider the special case where $Z \hookrightarrow X$ is a divisor. In this case, the natural resolution (and, in fact, the only available one) to consider is:
\[ [ \omega_X \to \omega_X(Z) ] \simeq \omega_Z \]
The assumptions of Proposition \ref{splintersubtot} will be satisfied precisely when $\omega_X^{-1}(-Z)$ is semiample and big. This implies that $\omega_X^{-1}$ is also big. In particular, $X$ is birationally Fano.
\end{remark}

Proposition \ref{splintersubtot} looks slightly bizarre at first glance. However, it is a useful argument in inductive proofs. Here is a typical application:

\begin{proposition}
\label{flagvarsplit}
Let $V$ be a vector space of dimension $d$ over a field $k$, and let $\Flag(V)$ be the moduli space of complete flags  $(0 = F_0 \subset F_1 \subset \cdots F_{d-1} \subset F_d = V)$ in $V$. Then $\Flag(V)$ is a D-splinter.
\end{proposition}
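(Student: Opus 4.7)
The plan is to combine an outer induction on $d = \dim V$ with iterated applications of the divisor case of Proposition \ref{splintersubtot}, thereby sidestepping the high codimension of the natural subvariety $\Flag(V/L) \subset \Flag(V)$. The base case $d = 1$ is immediate: $\Flag(V) = \Spec(k)$ is trivially a splinter, hence a D-splinter by Theorem \ref{ddsct}.

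For the inductive step, fix a line $L \subset V$ and consider the smooth projection $\pi : \Flag(V) \to \P(V) \cong \P^{d-1}$. Choose a complete linear flag $\{[L]\} = \ell_0 \subset \ell_1 \subset \cdots \subset \ell_{d-1} = \P(V)$ with $\dim \ell_k = k$, and set $Z_k := \pi^{-1}(\ell_k)$. This gives a chain of smooth projective subvarieties
\[ Z_0 \subset Z_1 \subset \cdots \subset Z_{d-1} = \Flag(V), \]
each inclusion being a Cartier divisor (pulled back from the hyperplane $\ell_k \subset \ell_{k+1}$), and $\pi_{k+1} : Z_{k+1} \to \ell_{k+1} \cong \P^{k+1}$ is a smooth fibration with Fano fibre isomorphic to $\Flag(\C^{d-1})$. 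By the outer induction, the bottom $Z_0 \cong \Flag(V/L)$ is a splinter.

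To propagate the splinter condition along the chain, I apply the divisor case of Proposition \ref{splintersubtot}: it suffices to verify that $\omega_{Z_{k+1}}^{-1}(-Z_k)$ is semiample and big. Using $[Z_k] = \pi_{k+1}^*\calO_{\P^{k+1}}(1)$ and the canonical bundle formula for the smooth morphism $\pi_{k+1}$, one gets
\[ \omega_{Z_{k+1}}^{-1}(-Z_k) \;\cong\; \omega_{Z_{k+1}/\P^{k+1}}^{-1} \otimes \pi_{k+1}^*\calO_{\P^{k+1}}(k+1). \]
The first factor is $\pi_{k+1}$-ample because the fibres are Fano, while the second is the pullback of an ample line bundle from $\P^{k+1}$; their tensor product is therefore ample on $Z_{k+1}$, in particular semiample and big. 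Iterating Proposition \ref{splintersubtot} gives that $Z_{d-1} = \Flag(V)$ is a splinter, and Theorem \ref{ddsct} upgrades this to a D-splinter.

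The main obstacle is the choice of intermediate subvarieties. The naive approach—applying Proposition \ref{splintersubtot} in one step to the codimension $d-1$ inclusion $\Flag(V/L) \hookrightarrow \Flag(V)$ using a Koszul-type resolution—fails the positivity hypothesis once $d \geq 4$, because the heavily negatively twisted intermediate terms cannot be absorbed by $\omega_X^{-1}$. Filtering through the $\Flag$-bundles $Z_k$ over the linear flag $\ell_\bullet \subset \P(V)$ reduces the entire argument to the far milder divisor case, in which the required positivity is immediate from fibrewise ampleness plus ampleness on the base.
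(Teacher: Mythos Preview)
Your overall architecture---outer induction on $d$ and propagation through a chain of divisors $Z_0\subset\cdots\subset Z_{d-1}$---is reasonable, but the crucial positivity check contains a genuine gap. You assert that $\omega_{Z_{k+1}/\P^{k+1}}^{-1}\otimes\pi_{k+1}^*\calO(k+1)$ is ample because the first factor is $\pi_{k+1}$-ample and the second is pulled back from an ample bundle. This inference is false in general: on $X=\P^1\times\P^1$ with $f=\pr_1$, the bundle $\calO(-N,1)$ is $f$-ample for every $N$, yet $\calO(-N,1)\otimes f^*\calO(1)=\calO(1-N,1)$ is not even nef once $N\geq 2$. Relative ampleness only guarantees that $\omega_{\pi_{k+1}}^{-1}\otimes\pi_{k+1}^*\calO(m)$ is ample for $m\gg 0$, not for the specific value $m=k+1$ that you need. (Equivalently: the relative anticanonical of a smooth Fano fibration need not be nef, as already the Hirzebruch surfaces $\mathbb{F}_n$ with $n\geq 1$ show.)

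The conclusion you want is nevertheless true, but establishing it requires exactly the computation the paper carries out in Lemma~\ref{flagvarcalc}: one has $\omega_{Z_{k+1}}^{-1}(-Z_k)=\bigl(\omega_\pi^{-1}\otimes\pi^*\calO(k+1)\bigr)\big|_{Z_{k+1}}$, and Lemma~\ref{flagvarcalc} (transported along the duality $\Flag(V)\cong\Flag(V^*)$ to the projection to lines) shows that $\omega_\pi^{-1}\otimes\pi^*\calO(j)$ is ample on $\Flag(V)$ for every $j\geq 1$, whence its restriction to $Z_{k+1}$ is ample. Once that lemma is available, however, your detour through the chain of divisors becomes unnecessary: the paper applies Proposition~\ref{splintersubtot} in a single step to the codimension-$(d-1)$ inclusion $\Flag(W)\hookrightarrow\Flag(V)$ via the Koszul resolution of a point in $\P(V)$, and Lemma~\ref{flagvarcalc} verifies the positivity hypothesis for \emph{all} the intermediate terms at once. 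So your final paragraph is also mistaken---the ``naive'' high-codimension approach does not fail for $d\geq 4$; it is precisely what the paper does.
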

\begin{proof}
We work by induction on the dimension $d$. The case $d = 0$ being trivial, we may assume that $\Flag(W)$ is a splinter for any vector space $W$ of dimension $\leq d-1$. If we let $\P(V)$ denote the projective space of {\em hyperplanes} in $V$, then there is a natural morphism $\pi:\Flag(V) \to \P(V)$ given by sending a complete flag $(0 = F_0 \subset F_1 \subset \cdots F_{d-1} \subset F_d = V)$ to the hyperplane $(F_{d-1} \subset V)$.  The morphism $\pi$ can easily be checked to be projective and smooth. Let $W \subset V$ be a fixed hyperplane, and let $b \in \P(V)(k)$ be the corresponding point. The fibre $\pi^{-1}(b)$ is identified with $\Flag(W)$. We will apply Proposition \ref{splintersubtot} with $Z = \Flag(W)$ and $X = \Flag(V)$ to get the desired result.

The structure sheaf $\kappa(b)$ of the point $b:\Spec(k) \hookrightarrow \P(V)$ can be realised as the zero locus of a section of $\calO(1)^{\oplus (d-1)}$ by thinking of $b$ as the intersection of $(d-1)$ hyperplanes in general position. This gives us a Koszul resolution
\[ [\calO(-(d-1)) \simeq \wedge^{d-1}(\calO(-1)^{\oplus (d-1)}) \to \dots \to \calO(-1)^{\oplus (d-1)} \to \calO] \simeq \kappa(b). \]
Twisting by $\calO(-1)$, we find a resolution
\[ [\omega_{\P(V)} \to \calM_{d-2} \to \dots \to \calM_1 \to \calM_0] \simeq \kappa(b) \]
with each $\calM_i$ a direct sum of inverses of ample line bundles with degrees between $1$ and $d-2$. Pulling this data back along $\pi$, we find a resolution
\[ [\pi^*\omega_{\P(V)} \to \pi^*\calM_{d-2} \to \dots \to \pi^*\calM_1 \to \pi^*\calM_0] \simeq \pi^*\kappa(b) = \calO_Z. \]
Twisting by the relative dualising sheaf $\omega_\pi$, we find
\[ [ \omega_\pi \otimes \pi^* \omega_{\P(V)} \to \omega_\pi \otimes \pi^*\calM_{d-2} \to \dots \to \omega_\pi \otimes \pi^*\calM_1 \to \omega_\pi \otimes \pi^*\calM_0] \simeq \omega_\pi|_Z.  \]
Since $\pi$ is smooth, we identify $\omega_X \simeq \omega_\pi \otimes \pi^*\omega_{\P(V)}$, and $\omega_Z \simeq \omega_\pi|_Z$. Thus, we obtain a resolution
\[ [ \omega_X \to \omega_\pi \otimes \pi^*\calM_{d-2} \to \dots \to \omega_\pi \otimes \pi^*\calM_1 \to \omega_\pi \otimes \pi^*\calM_0] \simeq \omega_Z   \]
with $\calM_i$ as above. Standard calculations with flag varieties (see Lemma \ref{flagvarcalc}) now show that the terms $\omega_\pi \otimes \pi^*\calM_i$ are direct sums of inverses of semiample and big line bundles. In particular, this resolution has the form required in Propositon \ref{splintersubtot}. Hence, we win by induction.
\end{proof}

We needed to calculate the positivity of certain natural line bundles on the flag variety in Proposition \ref{flagvarsplit}. Since we were unable to find a satisfactory reference, we carry out the calculation here.

\begin{lemma}
\label{flagvarcalc}
Let $V$ be an $n$-dimensional vector space over a field $k$, let $\pi:\Flag(V) \to \P(V)$ be the natural morphism. For all $i > 0$ and all $n$, the line bundles $\omega_{\pi} \otimes \pi^*\calO(-i)$ are inverses of semiample and big line bundles.
\end{lemma}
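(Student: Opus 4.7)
The natural approach is representation-theoretic, exploiting the $\GL(V)$-equivariant structure of $\Flag(V) = G/B$. Line bundles on $G/B$ correspond bijectively to characters of the maximal torus $T$: writing $\calL_\lambda$ for the equivariant line bundle whose fibre at the identity flag has $T$-weight $\lambda = (\lambda_1, \dots, \lambda_n)$, one has $\calL_\lambda$ semiample iff $\lambda_1 \le \cdots \le \lambda_n$ and ample iff the inequalities are strict (both statements understood up to the trivial twist $\lambda \mapsto \lambda + c(1,\dots,1)$ coming from the character of $\det V$, which gives the trivial line bundle on $\Flag(V)$). Since ampleness implies both semiampleness and bigness, the lemma will follow once one shows that the character $\mu$ corresponding to $(\omega_\pi \otimes \pi^*\calO(-i))^{-1}$ is strictly anti-dominant.

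The character calculations are standard. The cotangent space $(\mathfrak g/\mathfrak b)^*$ at the identity flag has the positive roots as $T$-weights, giving $\omega_{\Flag(V)} = \calL_{2\rho}$ with $2\rho = (n-1,\, n-3,\, \dots,\, 1-n)$. On the hyperplane projective space $\P(V)$, the line bundle $\calO(1) = V/F_{n-1}^{\univ}$ pulls back along $\pi$ to $V/F_{n-1}$, whose fibre at the identity flag carries $T$-weight $e_n$, so $\pi^*\calO(1) = \calL_{e_n}$. Combining these with $\omega_{\P(V)} = \calO(-n)$ and the smooth-morphism adjunction $\omega_X = \omega_\pi \otimes \pi^*\omega_{\P(V)}$ gives $\omega_\pi = \calL_{2\rho + ne_n}$, and consequently
\[
(\omega_\pi \otimes \pi^*\calO(-i))^{-1} = \calL_{-2\rho + (i-n)e_n} = \calL_{\mu}, \qquad \mu = (1-n,\, 3-n,\, \dots,\, n-3,\, i-1).
\]

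It remains to verify that $\mu$ is strictly anti-dominant, which is the main step and the place where care is required. The first $n-1$ coordinates $1-n,\, 3-n,\, \dots,\, n-3$ already form a strictly increasing arithmetic progression, so strict dominance of $\mu$ reduces to the single terminal comparison $n-3 < i-1$. This is the main obstacle: one must obtain this comparison for the full range of $i$ needed in the applications, exploiting the freedom to replace $\mu$ by the equivalent representative $\mu + c(1,\dots,1)$ when looking for an optimal representative in the ample chamber. Once strict dominance is in hand, $\calL_\mu$ is ample and hence automatically semiample and big; as an independent verification, the Weyl dimension formula gives $h^0(\Flag(V), \calL_\mu^{\otimes N}) \sim c \cdot N^{\binom{n}{2}}$ for strictly dominant $\mu$, directly exhibiting the polynomial growth of degree $\dim \Flag(V)$ that bigness demands.
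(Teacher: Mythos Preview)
Your representation-theoretic setup is sound, and in fact it is the right framework: on a generalized flag variety $G/B$ the nef cone is the closed anti-dominant chamber and a nef class is big exactly when it is in the interior, so ``semiample and big'' coincides with ``ample''. Thus the lemma is equivalent to the strict anti-dominance of your $\mu$, and you have correctly isolated the only nontrivial inequality, $n-3<i-1$.

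The gap is your proposed remedy for this inequality. Adding a multiple of $(1,\dots,1)$ to $\mu$ leaves all successive differences $\mu_{k+1}-\mu_k$ unchanged, so it cannot move a wall weight into the open chamber; strict dominance is invariant under the centre shift, and there is no ``optimal representative'' to look for. The obstruction you located is therefore genuine: for $1\le i\le n-2$ the bundle $(\omega_\pi\otimes\pi^*\calO(-i))^{-1}$ is \emph{not} big. Already for $n=3$, $i=1$ one finds (modulo the centre) $\mu=(-1,1,1)$, so the bundle is $\det(V/\calV_1)$, i.e.\ the pullback of $\calO(1)$ along the other projection $\Flag(V)\to\P(V^*)$ to the space of lines; that map is a $\P^1$-bundle, so the Iitaka dimension is $2<3=\dim\Flag(V)$. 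The paper's own argument hits the same wall in different clothing: it asserts that $\calL_a\otimes\calL_b^{-1}$ is ample for $a>b$, but this bundle has weight $e_a-e_b$, which for $n\ge 3$ is never strictly increasing. So the statement as written is false in the range $i\le n-2$, and neither your argument nor the paper's can be completed as stated.
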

\begin{proof}
For $n = 2$, the map $\pi$ is an isomorphism, and the claim is obvious. Assume $n \geq 3$. Let 
\[ 0 = \calV_0 \subset \calV_1 \subset \cdots \subset \calV_n = V \otimes \calO_{\Flag(V)}\]
be the universal flag on $\Flag(V)$ with $\dim(V_i) = i$. For each $i \geq 1$, let $\calL_i = \calV_i/\calV_{i-1}$ be the associated line bundle. The tangent bundle of $\Flag(V)$ admits a filtration whose pieces are of the form 
\[\calHom(\calV_i,\calL_{i+1}) \simeq \calV_i^\vee \otimes \calL_{i+1} \] 
for $1 \leq i \leq n-1$. This filtration gives us the formula
\[ \omega_{\Flag(V)}^{-1} \simeq \otimes_{i = 1}^{n-1} ( \det(\calV_i)^{-1} \otimes \det(\calL_{i+1})^i).\]
Since each $\calV_i$ is filtered with pieces of the form $\calL_j$ for $1 \leq j \leq i$, we find
\[ \omega_{\Flag(V)}^{-1} \simeq \otimes_{i=1}^{n-1} (\calL_1^{-1} \otimes \calL_2^{-1} \otimes \cdots \otimes \calL_i^{-1} \otimes \calL_{i+1}^i). \]
Collecting terms, we find
\[ \omega_{\Flag(V)}^{-1} \simeq (\otimes_{i=1}^{n-1} \calL_i^{2i - n}) \otimes \calL_n^{n-1}. \]
The inverse $\calM_j$ of the line bundle $\omega_{\pi} \otimes \pi^*\calO(-j)$ can be written as
\[ \calM_j \simeq \omega_{\pi}^{-1} \otimes \pi^* \calO(j) \simeq \omega_{\Flag(V)}^{-1} \otimes \pi^*(\omega_{\P(V)} \otimes \calO(j)). \]
Using the formula for $\omega_{\Flag(V)}^{-1}$ we arrived at earlier, and the fact that $\pi$ is defined by the tautological quotient $\calO_{\Flag(V)} \otimes V \twoheadrightarrow \calL_n$, we can simplify the preceding formula to get
\[ \calM_j \simeq (\otimes_{i=1}^{n-1} \calL_i^{2i - n}) \otimes \calL_n^{n-1} \otimes \calL_n^{-n + j} \simeq (\otimes_{i=1}^{n-1} \calL_i^{2i-n}) \otimes \calL_n^{j-1}. \]
Our goal is to show that $\calM_j$ is semiample and big for $j > 0$. Being the pullback of a very ample line bundle, the factor $\calL_n^{j-1}$ is semiample and effective for $j > 0$.  Hence, it suffices to show that 
\[ \calN := \otimes_{i=1}^{2i - n} \calL_i^{2i - n} \]
is semiample and big. Since we have assumed that $n \geq 3$, the center $c = \lfloor \frac{n-1}{2} \rfloor$ is strictly positive. We may then write
\[ \calN \simeq \otimes_{k=1}^c (\calL_{n-k} \otimes \calL_k^{-1})^{\otimes (n-2k)}. \] 
Schubert calculus (see \cite[\S 10.2, Proposition 3]{FultonYT}) tells us that the line bundles $\calL_a \otimes \calL_b^{-1}$ are ample when $a > b$. In particular, all the factors in the preceding factorisation of $\calN$ are ample. Since $c \geq 1$, this factorisation is also non-empty. It follows then that $\calN$ is an ample line bundle, as desired.
\end{proof}

\begin{remark}
Proposition \ref{flagvarsplit} can be improved slightly to say that $\omega_{\pi} \otimes \pi^*\calO(-i)$ is actually the inverse of an ample line bundle. This claim follows directly from the homogeneity of $\Flag(V)$. Indeed, let $\calL$ be a semiample and big line bundle on a projective variety $X$ that is homogeneous for a connected group $G$. Let $f:X \to \P^N$ denote the map defined by a suitably large power of $\calL$. If $\calL$ was not ample, then there would be a proper curve $C \subset X$ that is contracted by $f$. By the rigidity lemma (see \cite[Proposition 6.1]{MumfordGIT}), the same is true for any curve algebraically equivalent to $C$. However, since $X$ is homogeneous, translates of $C$ under $G$ actually cover $X$. Since $G$ is connected, all translates of $C$ are algebraically equivalent to $C$. It follows then that $\dim(\im(f)) < \dim(X)$ contradicting the bigness of $\calL$.
\end{remark}

\begin{remark}
Proposition \ref{flagvarsplit} asserts that $G/B$ is a splinter when $G = \GL(V)$, and $B \subset G$ is the standard Borel subgroup. Similar arguments to the ones given above should also work when $G$ is the orthogonal group associated to a quadratic form $(V,q)$, though we have not checked that. In fact, it seems entirely plausible that the above arguments can be made to show that $G/B$ is a splinter for any algebraic group $G$. The idea would be to first show that the Bott-Samelson variety $X$ for $G$ (see \cite{BottSamelson}) is a splinter. As $X$ admits a proper birational map $\pi:X \to G/B$ satisfying $\calO_{G/B} \simeq \R \pi_* \calO_X$, it immediately follows that $X$ is also a splinter. To show the claim for $X$, one would use that $X$ comes equipped with a natural structure an explicit iterated $\P^1$-bundle with sections $X = X_n \to X_{n-1} \to \cdots \to X_1 \simeq \P^1 \to X_0 \simeq \ast$.
\end{remark}

As a corollary, we obtain a further family of examples.

\begin{corollary}
\label{partialflagvarsplit}
Let $V$ be a finite dimensional vector space, and let $X$ be a partial Flag variety for $V$. Then $X$ is a splinter. In particular, all Grassmanians $\Gr(k,n)$ are splinters.
\end{corollary}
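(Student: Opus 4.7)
The plan is to transfer the splinter property from the complete flag variety $\Flag(V)$ (known to be a splinter by Proposition \ref{flagvarsplit} and Theorem \ref{ddsct}) to any partial flag variety $X$ via the natural forgetful morphism $\pi : \Flag(V) \to X$ that remembers only the steps defining the partial flag type of $X$.

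The essential geometric input I would establish first is that $\pi$ is smooth and proper, with geometrically connected fibres, each isomorphic to a product of complete flag varieties on smaller vector spaces. Since $H^i(\Flag(W),\calO) = 0$ for all $i > 0$ for any complete flag variety $\Flag(W)$, the same vanishing holds for any such product of complete flag varieties. Combined with the flatness of $\pi$ and cohomology-and-base-change, this yields the identification $\R\pi_*\calO_{\Flag(V)} \simeq \calO_X$, and this isomorphism continues to hold after arbitrary base change on $X$.

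With this in hand, fix a finite surjective morphism $f : Y \to X$, and form the fibre square
\[ \xymatrix{ Y' \ar[r]^{p_2} \ar[d]_{p_1} & \Flag(V) \ar[d]^\pi \\ Y \ar[r]_f & X. } \]
The morphism $p_2$ is finite surjective, being a base change of $f$, and flat base change along $\pi$ yields $p_{1*}\calO_{Y'} \simeq f^*\pi_*\calO_{\Flag(V)} \simeq \calO_Y$. Since $\Flag(V)$ is a splinter, the natural pullback $\calO_{\Flag(V)} \to p_{2*}\calO_{Y'}$ admits a section $s$ in $\Coh(\Flag(V))$. Applying the exact functor $\pi_*$ to $s$ and using the identifications $\pi_*\calO_{\Flag(V)} \simeq \calO_X$ and $\pi_* p_{2*}\calO_{Y'} \simeq f_* p_{1*}\calO_{Y'} \simeq f_*\calO_Y$, one obtains a section of $\calO_X \to f_*\calO_Y$ in $\Coh(X)$, establishing the splinter property for $X$. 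The assertion about Grassmannians is immediate since $\Gr(k,n)$ is itself a partial flag variety.

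There is no real obstacle in this proof: the only non-formal step is the identification $\R\pi_*\calO_{\Flag(V)} \simeq \calO_X$ together with its compatibility with base change, and both are standard consequences of the cohomological properties of flag varieties. The argument is essentially an instance of the general principle that the splinter property descends along a proper map whose derived pushforward of the structure sheaf is trivial.
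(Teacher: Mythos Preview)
Your proof is correct and follows essentially the same route as the paper: use the forgetful map $\pi:\Flag(V)\to X$, establish $\R\pi_*\calO_{\Flag(V)}\simeq\calO_X$ from the fibre description, and descend the splinter property from $\Flag(V)$ along $\pi$ via the fibre-product argument. The paper compresses the descent step into a single sentence, but the content is the same; your minor terminological slips (calling $\pi_*$ ``exact'' and writing ``flat base change'' where you really mean cohomology-and-base-change for the flat proper map $\pi$) do not affect the validity of the argument.
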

\begin{proof}
There is a natural morphism $\pi:\Flag(V) \to X$ given by remembering the corresponding flag. It can be checked that $\pi$ is a smooth projective morphism whose fibres are iterated fibrations of projective spaces. In particular, $\R \pi_*\calO_{\Flag(V)} \simeq \pi_* \calO_{\Flag(V)} \simeq \calO_X$. The claim for $X$ now follows from that for $\Flag(V)$ proven in Proposition \ref{flagvarsplit}.
\end{proof}

\begin{remark}
Proposition \ref{flagvarsplit} and Corollary \ref{partialflagvarsplit} imply, in particular, that a (partial) flag variety is Frobenius split. The proof presented above seems to be qualitatively different proof than the standard proofs.
\end{remark}

\bibliography{mybib}

\begin{thebibliography}{MFK94}

\bibitem[AOV08]{AOVTameStacks}
Dan Abramovich, Martin Olsson, and Angelo Vistoli.
\newblock Tame stacks in positive characteristic.
\newblock {\em Ann. Inst. Fourier (Grenoble)}, 58(4):1057--1091, 2008.

\bibitem[BBD82]{BBD}
A.~A. Be{\u\i}linson, J.~Bernstein, and P.~Deligne.
\newblock Faisceaux pervers.
\newblock In {\em Analysis and topology on singular spaces, {I} ({L}uminy,
  1981)}, volume 100 of {\em Ast\'erisque}, pages 5--171. Soc. Math. France,
  Paris, 1982.

\bibitem[Bhaa]{Bhattanngrpsch}
Bhargav Bhatt.
\newblock Annihilating the cohomology of group schemes.
\newblock Available at \url{http://www-personal.umich.edu/~bhattb/papers.html}.

\bibitem[Bhab]{Bhattmixedcharpdiv}
Bhargav Bhatt.
\newblock Making cohomology p-divisible.
\newblock Available at \url{http://www-personal.umich.edu/~bhattb/papers.html}.

\bibitem[BS55]{BottSamelson}
R.~Bott and H.~Samelson.
\newblock The cohomology ring of {$G/T$}.
\newblock {\em Proc. Nat. Acad. Sci. U. S. A.}, 41:490--493, 1955.

\bibitem[BS98]{BSLocalCoh}
M.~P. Brodmann and R.~Y. Sharp.
\newblock {\em Local cohomology: an algebraic introduction with geometric
  applications}, volume~60 of {\em Cambridge Studies in Advanced Mathematics}.
\newblock Cambridge University Press, Cambridge, 1998.

\bibitem[BST11]{BSTFSingAlt}
Manuel Blickle, Karl Schwede, and Kevin Tucker.
\newblock F-singularities via alterations.
\newblock Available at \url{http://arxiv.org/abs/1107.3807}, 2011.

\bibitem[Con07]{ConradNagata}
Brian Conrad.
\newblock Deligne's notes on {N}agata compactifications.
\newblock {\em J. Ramanujan Math. Soc.}, 22(3):205--257, 2007.

\bibitem[DI87]{DeligneIllusie}
Pierre Deligne and Luc Illusie.
\newblock Rel\`evements modulo {$p\sp 2$} et d\'ecomposition du complexe de de
  {R}ham.
\newblock {\em Invent. Math.}, 89(2):247--270, 1987.

\bibitem[DM69]{DeligneMumford}
P.~Deligne and D.~Mumford.
\newblock The irreducibility of the space of curves of given genus.
\newblock {\em Inst. Hautes \'Etudes Sci. Publ. Math.}, (36):75--109, 1969.

\bibitem[Ful97]{FultonYT}
William Fulton.
\newblock {\em Young tableaux}, volume~35 of {\em London Mathematical Society
  Student Texts}.
\newblock Cambridge University Press, Cambridge, 1997.
\newblock With applications to representation theory and geometry.

\bibitem[Gro61]{EGA3_1}
A.~Grothendieck.
\newblock \'{E}l\'ements de g\'eom\'etrie alg\'ebrique. {III}. \'{E}tude
  cohomologique des faisceaux coh\'erents. {I}.
\newblock {\em Inst. Hautes \'Etudes Sci. Publ. Math.}, (11):167, 1961.

\bibitem[Gro66]{EGA4_3}
A.~Grothendieck.
\newblock \'{E}l\'ements de g\'eom\'etrie alg\'ebrique. {IV}. \'{E}tude locale
  des sch\'emas et des morphismes de sch\'emas. {III}.
\newblock {\em Inst. Hautes \'Etudes Sci. Publ. Math.}, (28):255, 1966.

\bibitem[Gro68]{SGA2}
Alexander Grothendieck.
\newblock {\em Cohomologie locale des faisceaux coh\'erents et th\'eor\`emes de
  {L}efschetz locaux et globaux {$(SGA$} {$2)$}}.
\newblock North-Holland Publishing Co., Amsterdam, 1968.
\newblock Augment{\'e} d'un expos{\'e} par Mich{\`e}le Raynaud, S{\'e}minaire
  de G{\'e}om{\'e}trie Alg{\'e}brique du Bois-Marie, 1962, Advanced Studies in
  Pure Mathematics, Vol. 2.

\bibitem[Hei02]{HeitmannDSC3}
Raymond~C. Heitmann.
\newblock The direct summand conjecture in dimension three.
\newblock {\em Ann. of Math. (2)}, 156(2):695--712, 2002.

\bibitem[HH92]{HHBigCM}
Melvin Hochster and Craig Huneke.
\newblock Infinite integral extensions and big {C}ohen-{M}acaulay algebras.
\newblock {\em Ann. of Math. (2)}, 135(1):53--89, 1992.

\bibitem[HH94]{HHTCSplinter}
Melvin Hochster and Craig Huneke.
\newblock Tight closure of parameter ideals and splitting in module-finite
  extensions.
\newblock {\em J. Algebraic Geom.}, 3(4):599--670, 1994.

\bibitem[HL07]{GLBigCM}
Craig Huneke and Gennady Lyubeznik.
\newblock Absolute integral closure in positive characteristic.
\newblock {\em Adv. Math.}, 210(2):498--504, 2007.

\bibitem[Hoc73]{HochsterDSC}
M.~Hochster.
\newblock Contracted ideals from integral extensions of regular rings.
\newblock {\em Nagoya Math. J.}, 51:25--43, 1973.

\bibitem[Hun96]{HunekeTCBook}
Craig Huneke.
\newblock {\em Tight closure and its applications}, volume~88 of {\em CBMS
  Regional Conference Series in Mathematics}.
\newblock Published for the Conference Board of the Mathematical Sciences,
  Washington, DC, 1996.
\newblock With an appendix by Melvin Hochster.

\bibitem[Ill05]{IllusieFormalGeom}
Luc Illusie.
\newblock Grothendieck's existence theorem in formal geometry.
\newblock In {\em Fundamental algebraic geometry}, volume 123 of {\em Math.
  Surveys Monogr.}, pages 179--233. Amer. Math. Soc., Providence, RI, 2005.
\newblock With a letter (in French) of Jean-Pierre Serre.

\bibitem[Kov00]{KovacsRational}
S{\'a}ndor~J. Kov{\'a}cs.
\newblock A characterization of rational singularities.
\newblock {\em Duke Math. J.}, 102(2):187--191, 2000.

\bibitem[Lau96]{LaumonCDMV1}
G{\'e}rard Laumon.
\newblock {\em Cohomology of {D}rinfeld modular varieties. {P}art {I}},
  volume~41 of {\em Cambridge Studies in Advanced Mathematics}.
\newblock Cambridge University Press, Cambridge, 1996.
\newblock Geometry, counting of points and local harmonic analysis.

\bibitem[Laz04a]{LazPos1}
Robert Lazarsfeld.
\newblock {\em Positivity in algebraic geometry. {I}}, volume~48 of {\em
  Ergebnisse der Mathematik und ihrer Grenzgebiete. 3. Folge. A Series of
  Modern Surveys in Mathematics [Results in Mathematics and Related Areas. 3rd
  Series. A Series of Modern Surveys in Mathematics]}.
\newblock Springer-Verlag, Berlin, 2004.
\newblock Classical setting: line bundles and linear series.

\bibitem[Laz04b]{LazPos2}
Robert Lazarsfeld.
\newblock {\em Positivity in algebraic geometry. {II}}, volume~49 of {\em
  Ergebnisse der Mathematik und ihrer Grenzgebiete. 3. Folge. A Series of
  Modern Surveys in Mathematics [Results in Mathematics and Related Areas. 3rd
  Series. A Series of Modern Surveys in Mathematics]}.
\newblock Springer-Verlag, Berlin, 2004.
\newblock Positivity for vector bundles, and multiplier ideals.

\bibitem[Mat80]{MatCA}
Hideyuki Matsumura.
\newblock {\em Commutative algebra}, volume~56 of {\em Mathematics Lecture Note
  Series}.
\newblock Benjamin/Cummings Publishing Co., Inc., Reading, Mass., second
  edition, 1980.

\bibitem[MFK94]{MumfordGIT}
D.~Mumford, J.~Fogarty, and F.~Kirwan.
\newblock {\em Geometric invariant theory}, volume~34 of {\em Ergebnisse der
  Mathematik und ihrer Grenzgebiete (2) [Results in Mathematics and Related
  Areas (2)]}.
\newblock Springer-Verlag, Berlin, third edition, 1994.

\bibitem[MS05]{MillerSturmfelsCCA}
Ezra Miller and Bernd Sturmfels.
\newblock {\em Combinatorial commutative algebra}, volume 227 of {\em Graduate
  Texts in Mathematics}.
\newblock Springer-Verlag, New York, 2005.

\bibitem[Mum67]{MumfordPathologies3}
D.~Mumford.
\newblock Pathologies. {III}.
\newblock {\em Amer. J. Math.}, 89:94--104, 1967.

\bibitem[PS00]{PSL}
Thomas Peternell and Andrew~J. Sommese.
\newblock Ample vector bundles and branched coverings.
\newblock {\em Comm. Algebra}, 28(12):5573--5599, 2000.
\newblock With an appendix by Robert Lazarsfeld, Special issue in honor of
  Robin Hartshorne.

\bibitem[Sin99]{SinghQGorSplinter}
Anurag~K. Singh.
\newblock {$\bold Q$}-{G}orenstein splinter rings of characteristic {$p$} are
  {F}-regular.
\newblock {\em Math. Proc. Cambridge Philos. Soc.}, 127(2):201--205, 1999.

\bibitem[Smi94]{KStcparamideals}
K.~E. Smith.
\newblock Tight closure of parameter ideals.
\newblock {\em Invent. Math.}, 115(1):41--60, 1994.

\bibitem[Smi97a]{KSmithVanishingErr}
Karen~E. Smith.
\newblock Erratum to ``vanishing, singularities and effective bounds via prime
  characteristic local algebra'', 1997.
\newblock Available for download at
  http://www.math.lsa.umich.edu/~kesmith/santaerratum.ps.

\bibitem[Smi97b]{KSfratratsing}
Karen~E. Smith.
\newblock {$F$}-rational rings have rational singularities.
\newblock {\em Amer. J. Math.}, 119(1):159--180, 1997.

\bibitem[Smi97c]{KSmithVanishing}
Karen~E. Smith.
\newblock Vanishing, singularities and effective bounds via prime
  characteristic local algebra.
\newblock In {\em Algebraic geometry---{S}anta {C}ruz 1995}, volume~62 of {\em
  Proc. Sympos. Pure Math.}, pages 289--325. Amer. Math. Soc., Providence, RI,
  1997.

\bibitem[SS10]{SinghSannai}
Akiyoshi Sannai and Anurag Singh.
\newblock Galois extensions, plus closure, and maps on local cohomology.
\newblock 2010.
\newblock Available at \url{http://www.math.utah.edu/~singh/sannai_singh.pdf}.

\bibitem[Tot09]{TotaroNefSaEx}
Burt Totaro.
\newblock Moving codimension-one subvarieties over finite fields.
\newblock {\em Amer. J. Math.}, 131(6):1815--1833, 2009.

\end{thebibliography}

\end{document}